%
%



\documentclass[10pt]{amsart}
\pdfoutput=1
\usepackage{amscd}
\usepackage{amssymb}
\usepackage[all]{xy}
\usepackage{lmodern}
\usepackage[T1]{fontenc}
\usepackage{hyperref}
\hypersetup{colorlinks=false}
\date{4 July 2018} 
\title[Weak Stability]{Weak Proregularity, Weak Stability, and the 
Noncommutative MGM Equivalence}

\author{Rishi Vyas and Amnon Yekutieli}

\address{Vyas: Department of Mathematics, 
Ben Gurion University, Be'er Sheva 84105, Israel}
\email{vyas.rishi@gmail.com}

\address{Yekutieli: Department of Mathematics, Ben Gurion University,
Be'er Sheva 84105, Israel}
\email{amyekut@math.bgu.ac.il}

\thanks{{\em Mathematics Subject Classification} 2010. 
Primary: 18G10; Secondary: 13D45, 16S90, 16E35}
%
%
%
%
\keywords{Torsion classes, derived torsion, derived completion.} 
\thanks{Supported by the Israel Science Foundation grants no.\ 253/13 and 
170/12. The first author was partially supported by the Center for Advanced 
Studies in Mathematics at Ben-Gurion University, and by the 
Israel Council for Higher Education.}
 

\newtheorem{thm}[equation]{Theorem}
\newtheorem{cor}[equation]{Corollary}
\newtheorem{prop}[equation]{Proposition}
\newtheorem{lem}[equation]{Lemma}
\theoremstyle{definition}
\newtheorem{dfn}[equation]{Definition}
\newtheorem{rem}[equation]{Remark}
\newtheorem{exa}[equation]{Example}

\newtheorem{que}[equation]{Question}

\newtheorem{conv}[equation]{Convention}

\numberwithin{equation}{section}

\newcommand{\sub}{\subseteq}
\newcommand{\iso}{\xrightarrow{\simeq}}

\newcommand{\xar}{\xrightarrow}
\newcommand{\opn}{\operatorname}
\newcommand{\cat}[1]{\operatorname{\mathsf{#1}}}
\newcommand{\catt}[1]{{\operatorname{\mathsf{#1}}}}

\newcommand{\cd}{\,{\cdot}\,}

\newcommand{\rmitem}[1]{\item[\text{\textup{(#1)}}]}
\newcommand{\mfrak}[1]{\mathfrak{#1}}

\newcommand{\mrm}[1]{\mathrm{#1}}

\newcommand{\Ga}{\Gamma}
\newcommand{\La}{\Lambda}
\newcommand{\si}{\sigma}

\newcommand{\de}{\delta}
\renewcommand{\th}{\theta}

\newcommand{\al}{\alpha}
\newcommand{\be}{\beta}
\newcommand{\ga}{\gamma}
\newcommand{\ep}{\epsilon}
\newcommand{\ze}{\zeta}

\newcommand{\m}{\mfrak{m}}

\renewcommand{\b}{\mfrak{b}}
\renewcommand{\a}{\mfrak{a}}

\renewcommand{\aa}{\bsym{a}}

\newcommand{\K}{\mathbb{K}}

\newcommand{\Z}{\mathbb{Z}}
\newcommand{\N}{\mathbb{N}}

\newcommand{\centover}{ /_{\! \mrm{c}}\,}

\newcommand{\tup}[1]{\textup{#1}}
\newcommand{\bsym}[1]{\boldsymbol{#1}}

\newcommand{\ot}{\otimes}

\newcommand{\til}[1]{\tilde{#1}}
\newcommand{\what}[1]{\widehat{#1}}


\newcommand{\lb}{\linebreak}




\begin{document}

\begin{abstract}
Let $A$ be a commutative ring, and let $\a$ be a finitely generated ideal in 
it. It is known that a necessary and sufficient condition for the derived 
$\a$-torsion and $\a$-adic completion functors to be nicely behaved is the 
{\em weak proregularity} of $\a$. In particular, the {\em MGM Equivalence} 
holds. 

Because weak proregularity is defined in terms of elements of the ring 
(it involves limits of Koszul complexes), it is not suitable for 
noncommutative ring theory. 

In this paper we introduce a new condition on a torsion class $\catt{T}$ in a
module category: {\em weak stability}. Our first main theorem says that in the 
commutative case, the ideal $\a$ is weakly proregular if and only if the 
corresponding torsion class $\catt{T}$ is weakly stable. 

We then study weak stability of torsion classes in module categories over 
noncommutative rings. There are three main theorems in this context:
$\vartriangleright$ For a torsion class $\catt{T}$ that is weakly 
stable, quasi-compact and finite dimensional, the 
right derived torsion functor is isomorphic to a left derived tensor functor. 
\lb $\vartriangleright$ The {\em Noncommutative MGM Equivalence}, that holds 
under the same assumptions on $\catt{T}$. 
$\vartriangleright$ A theorem about {\em symmetric derived torsion} for 
complexes of bimodules. 
This last theorem is a generalization of a result of Van den Bergh from 1997, 
and corrects an error in a paper of Yekutieli \& Zhang from 2003. 
\end{abstract}

\maketitle

\tableofcontents

\setcounter{section}{-1}
\section{Introduction}

Let $A$ be a commutative ring, and let $\a$ be a finitely generated ideal in 
$A$. The category of $A$-modules is denoted by $\cat{M}(A)$. There are two 
functors on the category $\cat{M}(A)$ that the ideal $\a$ determines: the 
{\em $\a$-torsion functor $\Ga_{\a}$}, and the {\em $\a$-adic completion 
functor $\La_{\a}$}. These are idempotent additive functors: 
$\Ga_{\a} \circ \Ga_{\a} \cong \Ga_{\a}$ and 
$\La_{\a} \circ \La_{\a} \cong \La_{\a}$.

The functors $\Ga_{\a}$ and $\La_{\a}$ seem as though they could be adjoint to 
each other. This is false however. What is true is that under suitable 
assumptions, the derived functors
\[ \mrm{R} \Ga_{\a}, \mrm{L} \La_{\a} : \cat{D}(A) \to \cat{D}(A) \]
are adjoint to each other. Here $\cat{D}(A)$ is the (unbounded) derived 
category of $A$-modules. The most general condition under which this is known 
to hold is when the ideal $\a$ is {\em weakly proregular}. 

By definition, the ideal $\a$ is weakly proregular if it can be generated by a 
{\em weakly proregular sequence}. A sequence of elements 
$\aa = (a_1, \ldots, a_n)$ in $A$ is called weakly proregular if a rather 
complicated condition is satisfied by the {\em Koszul complexes} associated to 
powers of $\aa$; see Definition \ref{dfn:48}. This condition was first stated, 
without a name, by A. Grothendieck (see \cite{LC} and \cite{SGA2}); the name was 
only given by L. Alonso, A. Jeremias and J. Lipman in \cite[Correction]{AJL}.
If $A$ is noetherian, then every ideal in it is weakly proregular; but there 
are many non-noetherian examples (see Examples \ref{exa:1325}-\ref{exa:1326}).  

The next theorem is the culmination of results by Grothendieck \cite{LC}, 
\cite{SGA2}; E. Matlis \cite{Ma}; J.P.C. Greenlees and J.P. May \cite{GM};  
Alonso, Jeremias and Lipman \cite{AJL}; P. Schenzel \cite{Sn}; 
M. Kashiwara and P. Schapira \cite{KS3}; and M. Porta, 
L. Shaul and Yekutieli \cite{PSY1}. There is parallel recent work on these 
matters by L. Positselski \cite{Po}. 

Let $\a \sub A$ be a finitely generated ideal. A complex 
$M \in \cat{D}(A)$ is called {\em derived $\a$-torsion} if 
the canonical morphism $\mrm{R} \Ga_{\a}(M) \to M$ is an isomorphism.
Similarly, a complex $M \in \cat{D}(A)$ is called {\em derived $\a$-adically 
complete} if  the canonical morphism $M \to \mrm{L} \La_{\a}(M)$ is an 
isomorphism. (In \cite{PSY1} these complexes were called {\em cohomologically 
complete} and {\em cohomologically torsion}, respectively; but now we realize 
that the adjective ``derived'' is better suited than ``cohomologically''. See 
Definition \ref{dfn:2025} below.) 
We denote by $\cat{D}(A)_{\a\tup{-tor}}$ and 
$\cat{D}(A)_{\a\tup{-com}}$ 
the full subcategory of $\cat{D}(A)$ on the derived $\a$-torsion
and the derived $\a$-adically complete complexes, respectively. 
These are full triangulated subcategories. 

\begin{thm}[MGM Equivalence, {\cite{PSY1}}] \label{thm:1375}
Let $\a$ be a weakly proregular ideal in a commutative ring $A$. 
\begin{enumerate}
\item The functor $\mrm{L} \La_{\a}$ is right adjoint to the functor 
$\mrm{R} \Ga_{\a}$.

\item The functors $\mrm{R} \Ga_{\a}$ and $\mrm{L} \La_{\a}$ are 
idempotent. 

\item The categories $\cat{D}(A)_{\a\tup{-tor}}$ and 
$\cat{D}(A)_{\a\tup{-com}}$ are the essential images of the functors 
$\mrm{R} \Ga_{\a}$ and $\mrm{L} \La_{\a}$, respectively. 

\item The functor 
\[ \mrm{L} \La_{\a} : \cat{D}(A)_{\a\tup{-tor}} \to 
\cat{D}(A)_{\a\tup{-com}} \]
is an equivalence of triangulated categories, with quasi-inverse 
$\mrm{R} \Ga_{\a}$.
\end{enumerate}
\end{thm}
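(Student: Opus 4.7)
The plan is to reduce everything to an explicit model of $\mrm{R}\Ga_{\a}$ and $\mrm{L}\La_{\a}$ built from a \emph{telescope} (or infinite dual Koszul) complex $T = T(A;\aa)$ associated to a generating sequence $\aa = (a_1, \ldots, a_n)$ of $\a$. Weak proregularity of $\aa$ is precisely the statement (expressed through Koszul-cohomology vanishing) that makes $T$ behave like the ``derived'' version of the $\a$-torsion functor. Concretely, the first step is to establish the two functorial isomorphisms
\[ \mrm{R}\Ga_{\a}(M) \,\cong\, T \otimes^{\mrm{L}}_{A} M, \qquad \mrm{L}\La_{\a}(N) \,\cong\, \mrm{R}\mcal{H}om_{A}(T, N) \]
in $\cat{D}(A)$. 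The torsion formula is obtained by writing $\Ga_{\a}$ as a filtered colimit of $\Hom_A(A/\a^k, -)$, comparing with a K-flat resolution of $T$, and invoking weak proregularity to conclude that a certain pro-system of Koszul cohomologies is pro-zero; the completion formula follows by Hom-tensor adjunction together with the observation that $T$ is a bounded complex of flat $A$-modules. I also need the fundamental idempotency $T \otimes^{\mrm{L}}_{A} T \cong T$, which once again is a direct consequence of weak proregularity.

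With these representations in hand, part (1) is immediate: for $M, N \in \cat{D}(A)$, standard derived Hom-tensor adjunction gives
\[ \Hom_{\cat{D}(A)}\bigl( \mrm{R}\Ga_{\a}(M), N \bigr) \,\cong\, \Hom_{\cat{D}(A)}\bigl( T \otimes^{\mrm{L}} M, N \bigr) \,\cong\, \Hom_{\cat{D}(A)}\bigl( M, \mrm{R}\mcal{H}om(T, N) \bigr) \,\cong\, \Hom_{\cat{D}(A)}\bigl( M, \mrm{L}\La_{\a}(N) \bigr). \]
Part (2) follows from the idempotency $T \otimes^{\mrm{L}} T \cong T$: applied on the left it yields $\mrm{R}\Ga_{\a} \circ \mrm{R}\Ga_{\a} \cong \mrm{R}\Ga_{\a}$, and applied through adjunction (or via $\mrm{R}\mcal{H}om(T, \mrm{R}\mcal{H}om(T, -)) \cong \mrm{R}\mcal{H}om(T \otimes^{\mrm{L}} T, -)$) it yields $\mrm{L}\La_{\a} \circ \mrm{L}\La_{\a} \cong \mrm{L}\La_{\a}$.

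Part (3) is then a formal consequence of part (2). If $M \cong \mrm{R}\Ga_{\a}(M)$ then $M$ is tautologically in the essential image; conversely, if $M \cong \mrm{R}\Ga_{\a}(N)$ then idempotency gives $\mrm{R}\Ga_{\a}(M) \cong \mrm{R}\Ga_{\a}\mrm{R}\Ga_{\a}(N) \cong \mrm{R}\Ga_{\a}(N) \cong M$, so $M$ is derived $\a$-torsion, and the identification with a triangulated subcategory is preserved because both functors are triangulated. The symmetric argument works for $\mrm{L}\La_{\a}$ and the derived complete subcategory.

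For part (4), I would examine the unit $\eta\colon \mrm{id} \to \mrm{L}\La_{\a} \circ \mrm{R}\Ga_{\a}$ and counit $\ep\colon \mrm{R}\Ga_{\a} \circ \mrm{L}\La_{\a} \to \mrm{id}$ arising from the adjunction of part (1). The triangle identities, combined with the fact that both compositions are idempotent endofunctors valued in the respective subcategories, force $\eta_{M}$ to be an isomorphism whenever $M$ is derived $\a$-torsion and $\ep_{N}$ to be an isomorphism whenever $N$ is derived $\a$-adically complete; this is a general fact about a pair of adjoint idempotent functors whose images coincide with the subcategories defined by the unit/counit being an isomorphism. Combining with (3), the restricted functors are mutually quasi-inverse equivalences between $\cat{D}(A)_{\a\tup{-tor}}$ and $\cat{D}(A)_{\a\tup{-com}}$. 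The main obstacle in this program is the foundational first step: extracting the two Koszul-theoretic representations of $\mrm{R}\Ga_{\a}$ and $\mrm{L}\La_{\a}$, and in particular pinning down the idempotency $T \otimes^{\mrm{L}} T \cong T$, from the rather delicate pro-zero condition that defines weak proregularity. Once that technical input is secured, parts (1)--(4) are a chain of essentially formal consequences.
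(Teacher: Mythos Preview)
Your approach is correct and matches both the source \cite{PSY1} (from which the paper quotes this theorem rather than proving it) and the paper's own abstract generalization in Theorem~\ref{thm:1075}. The paper itself, in Section~\ref{sec:comm-rings}, explicitly records that the telescope complex $\opn{Tel}(A;\aa)$ is an idempotent copointed object representing $\mrm{R}\Ga_{\a}$ and $\mrm{L}\La_{\a}$ exactly as you describe, so your foundational first step is the standard one.

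One point deserves sharpening. In part~(4) you appeal to ``triangle identities'' and a ``general fact about a pair of adjoint idempotent functors''; this is not quite the right mechanism. The triangle identities relate the adjunction unit and counit, whereas the subcategories $\cat{D}(A)_{\a\tup{-tor}}$ and $\cat{D}(A)_{\a\tup{-com}}$ are cut out by the \emph{copointing} $\si^{\mrm{R}}:\mrm{R}\Ga_{\a}\to\opn{Id}$ and \emph{pointing} $\tau^{\mrm{L}}:\opn{Id}\to\mrm{L}\La_{\a}$, which are different morphisms. What one actually needs is that $G(\si_M):GF(M)\to G(M)$ and $F(\tau_M):F(M)\to FG(M)$ are isomorphisms for \emph{all} $M$ (this is \cite[Lemma~7.9]{PSY1}). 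The paper's proof of the analogous Theorem~\ref{thm:1075}(4) obtains this by taking the cone $N$ of $\rho:T\to A$, using idempotency to get $T\ot_A^{\mrm{L}}N=0$, invoking the kernel coincidence $\ker F=\ker G$ (Lemma~\ref{lem:1180}, which \emph{does} follow from adjunction plus idempotence), and then reading off the conclusion from the distinguished triangle $T\ot_A^{\mrm{L}}M\to M\to N\ot_A^{\mrm{L}}M$. Once you replace your hand-wave with this cone argument, your outline is complete.
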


Actually, item (1) of this theorem is usually called {\em GM Duality}, and it 
is \cite[Erratum, Theorem 9]{PSY1}. Item (2) means that 
$\mrm{R} \Ga_{\a} \circ \mrm{R} \Ga_{\a} \cong \mrm{R} \Ga_{\a}$ and 
$\mrm{L} \La_{\a} \circ \mrm{L} \La_{\a} \cong \mrm{L} \La_{\a}$.
Item (4) is MGM Equivalence itself, as it was called in \cite{PSY1}, and it is 
a slight rephrasing of \cite[Theorem 1.1]{PSY1}. 

{\em The goal of this paper is to find a noncommutative analogue of weak 
proregularity, and to prove a suitable version of the MGM Equivalence.}
It should be emphasized that all prior characterizations of weak proregularity 
were in terms of elements (formulas involving limits of Koszul complexes). 
Such formulas rarely make any sense in the noncommutative setting. 

Let $A$ be a noncommutative  (i.e.\ not necessarily commutative) ring. By 
default $A$-modules are left modules. 
Let $\cat{M}(A)$ be the category of (left) $A$-modules. Recall that a {\em 
torsion class} in $\cat{M}(A)$
is a class of objects $\catt{T} \subseteq \cat{M}(A)$ 
that is closed under taking subobjects, quotients, extensions and 
infinite direct sums. In \cite{St} and other texts, this is called a 
{\em hereditary torsion class}. A module $M \in \cat{M}(A)$ is said to be a
{\em  $\catt{T}$-torsion module} if it belongs to $\catt{T}$. 
The torsion class $\catt{T}$ gives rise to a {\em 
torsion functor} $\Ga_{\catt{T}}$, that is a left exact additive functor from 
$\cat{M}(A)$ to itself. For an $A$-module $M$, the module 
$\Ga_{\catt{T}}(M)$ is the biggest $\catt{T}$-torsion submodule of $M$. 
Thus $M \in \catt{T}$ if and only if $\Ga_{\catt{T}}(M) = M$. 
Following \cite{YZ}, we say that an $A$-module $M$ is {\em $\catt{T}$-flasque} 
if $\mrm{R}^q \Ga_{\catt{T}}(M) = 0$ for all $q > 0$. 
Every injective $A$-module is $\catt{T}$-flasque, but often there are many 
more.

Here is the categorical notion that we propose as a generalization of weak 
proregularity. 

\begin{dfn} \label{dfn:1375}
Let $\catt{T} \subseteq \cat{M}(A)$ be a torsion class. 
We call $\catt{T}$ a {\em weakly stable torsion class} if for every injective 
$A$-module $I$, the module $\Ga_{\catt{T}}(I)$ is $\catt{T}$-flasque.
\end{dfn}

The name ``weakly stable'' reflects the standard usage of the name ``stable'': 
$\catt{T}$ is called a stable torsion class if for every injective 
$A$-module $I$, the module $\Ga_{\catt{T}}(I)$ is injective. See \cite{St}.

When $A$ is commutative and $\a$ is a finitely generated ideal in it, 
the {\em $\a$-torsion class} $\catt{T} \sub \cat{M}(A)$ is  
the class of modules $M$ such that $\Ga_{\a}(M) = M$; and the torsion functor is
$\Ga_{\catt{T}} = \Ga_{\a}$. 

Here is our first main result.

\begin{thm} \label{thm:1377}
Let $A$ be a commutative ring, let $\aa$ be a finite sequence of elements of 
$A$, let $\a$ be the ideal generated by $\aa$, and let $\catt{T}$ be the 
associated torsion class in $\cat{M}(A)$. The following two conditions 
are equivalent\tup{:} 
\begin{enumerate}
\rmitem{i} The sequence $\aa$ is weakly proregular. 

\rmitem{ii} The torsion class $\catt{T}$ is weakly stable.
\end{enumerate}
\end{thm}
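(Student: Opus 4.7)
Plan. The proof splits into the two implications (i)$\,\Rightarrow\,$(ii) and (ii)$\,\Rightarrow\,$(i), and I would organize both around the infinite dual Koszul (telescope) complex $K_\infty(\aa)$, a bounded complex of flat $A$-modules, together with the canonical morphism $\psi_M : K_\infty(\aa) \otimes_A M \to \mrm{R} \Ga_\a(M)$ in $\cat{D}(A)$. The bridge between the element-wise condition of Definition \ref{dfn:48} and module-theoretic statements is the well-known reformulation (essentially due to Schenzel \cite{Sn} and used throughout \cite{PSY1}): $\aa$ is weakly proregular if and only if $\psi_M$ is a quasi-isomorphism for all $M$; equivalently, if and only if $H^{-i}(K_\infty(\aa) \otimes_A I) = 0$ for every injective $A$-module $I$ and every $i > 0$.

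For (i)$\,\Rightarrow\,$(ii), I would simply invoke Theorem \ref{thm:1375}(2): weak proregularity makes $\mrm{R}\Ga_\a$ idempotent. For injective $I$ we have $\mrm{R}\Ga_\a(I) = \Ga_\a(I)$ (higher derived functors of a left exact functor vanish on injectives), and idempotency then gives
\[ \mrm{R}\Ga_\a(\Ga_\a(I)) \;\cong\; \mrm{R}\Ga_\a(\mrm{R}\Ga_\a(I)) \;\cong\; \mrm{R}\Ga_\a(I) \;=\; \Ga_\a(I) \]
in $\cat{D}(A)$. This forces $\mrm{R}^q \Ga_\a(\Ga_\a(I)) = 0$ for all $q > 0$, i.e.\ $\Ga_\a(I)$ is $\catt{T}$-flasque, so $\catt{T}$ is weakly stable.

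The substance is in the converse (ii)$\,\Rightarrow\,$(i). Assuming weak stability, the plan is to show that $\psi_I$ is a quasi-isomorphism for every injective $I$, which establishes weak proregularity via the reformulation above. Degree $0$ is automatic: both sides equal $\Ga_\a(I)$. For the higher-degree vanishing I would exploit the fact that, under weak stability, $\mrm{R}\Ga_\a(\Ga_\a(I)) \cong \Ga_\a(I)$ in $\cat{D}(A)$, and compare this to $\mrm{R}\Ga_\a$ applied to the complex $K_\infty(\aa) \otimes_A I$. Concretely, I anticipate a two-spectral-sequence argument: one coming from the Koszul filtration of $K_\infty(\aa) \otimes_A I$, the other computing hypercohomology of $\mrm{R}\Ga_\a$ on this complex, both abutting to the same object. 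The weak stability hypothesis collapses the relevant $E_2$ page, leaving the higher Koszul cohomologies of $I$ no room to be nonzero. An inductive reduction on the length $n$ of $\aa$, with the one-variable base case handled directly from $K_\infty(a) \simeq [A \to A_a]$ and the flatness of localization, should complete the argument.

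The principal obstacle will be this final comparison step: translating the purely module-categorical flasqueness of $\Ga_\a(I)$ into the Koszul-cohomological vanishing demanded by weak proregularity. The inductive step in particular requires commuting $K_\infty$ past tensor products while showing that $\catt{T}$-flasqueness propagates appropriately through the construction. This is where the Schenzel-type reformulation of weak proregularity is likely to be indispensable, serving to convert the abstract vanishing back to the concrete Mittag--Leffler condition on Koszul cohomologies of powers of $\aa$.
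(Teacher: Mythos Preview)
Your direction (i)$\Rightarrow$(ii) matches the paper's argument exactly.

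For (ii)$\Rightarrow$(i), however, you have only outlined a strategy and you explicitly flag the ``final comparison step'' as an unresolved obstacle. The spectral-sequence/induction-on-$n$ route you anticipate is not what the paper does, and in any case you have not carried it out. So as it stands this direction is a genuine gap.

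The paper's argument is a direct proof by contradiction, and the engine is not a spectral sequence but a structural lemma about $K := \opn{K}^\vee_\infty(A;\aa) \otimes_A I$ (Lemma \ref{lem:1335}): for every $p \geq 1$ one has $\Ga_\a(K^p) = 0$ and $K^p$ is right $\Ga_\a$-acyclic (because each such $K^p$ is a sum of modules on which some $a_j$ acts invertibly), and each $\opn{H}^p(K)$ is $\a$-torsion. Now suppose $\opn{H}^p(K) \neq 0$ for some minimal $p \geq 1$. The exact truncation
\[
0 \to \Ga_\a(I) \to K^0 \to K^1 \to \cdots \to K^p
\]
extends to a resolution $J$ of $\Ga_\a(I)$ by right $\Ga_\a$-acyclic modules, so $\opn{H}^q(\Ga_\a(J)) \cong \mrm{R}^q\Ga_\a(\Ga_\a(I))$. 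Weak stability says $\Ga_\a(I)$ is $\catt{T}$-flasque, forcing $\Ga_\a(J)$ to be exact in positive degrees. But $\Ga_\a(J^q) = 0$ for $1 \leq q \leq p$ by the lemma, while the nonzero $\a$-torsion class $\opn{H}^p(K)$ embeds in $\Ga_\a(\opn{Ker}(J^{p+1} \to J^{p+2}))$, producing $\opn{H}^{p+1}(\Ga_\a(J)) \neq 0$ --- a contradiction.

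The key point you are missing is this elementary but decisive observation that the positive-degree terms of $\opn{K}^\vee_\infty(A;\aa) \otimes_A I$ have zero $\a$-torsion and are $\Ga_\a$-acyclic. Once you have that, no spectral sequence or induction is needed; the contradiction is produced by hand. (Also, a minor indexing issue: $\opn{K}^\vee_\infty(A;\aa)$ sits in degrees $0,\ldots,n$, so the relevant vanishing is $\opn{H}^p = 0$ for $p > 0$, not $\opn{H}^{-i}$.)
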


This is repeated as Theorem \ref{thm:47} in Section \ref{sec:comm-rings} and 
proved there. For other examples of weakly stable torsion classes, see Examples 
\ref{exa:1315}-\ref{exa:1066}. 

We now move back to the noncommutative setting. 
Consider a noncommutative ring $A$, that is 
central and flat over a commutative base ring $\K$. The opposite ring is 
$A^{\mrm{op}}$, and the enveloping ring is $A^{\mrm{en}} := A \ot_{\K} 
A^{\mrm{op}}$.  Thus $\cat{M}(A^{\mrm{en}})$ is the category of $\K$-central 
$A$-bimodules, and $\cat{D}(A^{\mrm{en}})$ is its derived category.

The assumption that $A$ is flat over $\K$ is only for the sake of 
simplicity. A general treatment, not assuming flatness, requires 
the use of DG rings, and is substantially more involved. See Remarks 
\ref{rem:1420} and \ref{rem:1350}.

In order to state a noncommutative analogue of the MGM Equivalence, we must 
first introduce a suitable categorical framework. There are several ingredients 
involved:
\begin{itemize}
\item Certain properties  of triangulated functors and torsion classes
(namely: quasi-compact, weakly stable, finite dimensional and idempotent). This 
is done in Sections \ref{sec:cpct-funcs}-\ref{sec:tors-cls}.

\item Derived categories of bimodules, derived functors between them, and 
the monoidal structure on $\cat{D}(A^{\mrm{en}})$.
This is done in Section \ref{sec:bimods}.

\item Idempotent copointed objects in the monoidal category 
$\cat{D}(A^{\mrm{en}})$, and the triangulated functors they induce by monoidal 
actions. See Section \ref{sec:cop-obj}. 
\end{itemize}
 
Given another flat central $\K$-ring $B$, the category of $\K$-central 
$A$-$B$-bimodules is $\cat{M}(A \ot_{\K} B^{\mrm{op}})$, and 
its derived category is $\cat{D}(A \ot_{\K} B^{\mrm{op}})$. 
A torsion class $\catt{T}$ in $\cat{M}(A)$ gives rise to a derived torsion 
functor $\mrm{R} \Ga_{\catt{T}}$ on the category 
$\cat{D}(A \ot_{\K} B^{\mrm{op}})$. 
There is a left monoidal action 
$(- \ot_A^{\mrm{L}} -)$ of $\cat{D}(A^{\mrm{en}})$ on 
$\cat{D}(A \ot_{\K} B^{\mrm{op}})$.

Here is our second main result. 

\begin{thm}[Representability of Derived Torsion] \label{thm:1405}
Let $A$ and $B$ be flat central $\K$-rings. Let $\catt{T}$ be a quasi-compact, 
finite dimensional, weakly stable torsion class in $\cat{M}(A)$.
Define the object 
\[ P := \mrm{R} \Ga_{\catt{T}}(A) \in \cat{D}(A^{\mrm{en}}) . \]
Then there is an isomorphism 
\[ P \ot_{A}^{\mrm{L}} (-) \cong  \mrm{R} \Ga_{\catt{T}} \]
of triangulated functors from $\cat{D}(A \ot_{\K} B^{\mrm{op}})$ to itself.
\end{thm}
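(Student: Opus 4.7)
The plan is to construct a natural transformation $\eta : P \ot_A^{\mrm{L}} (-) \to \mrm{R} \Ga_{\catt{T}}(-)$ of triangulated functors on $\cat{D}(A \ot_{\K} B^{\mrm{op}})$, and then verify it is an isomorphism by a standard way-out reduction to the single generator $A \ot_{\K} B^{\mrm{op}}$, using each of the three hypotheses on $\catt{T}$ in turn. The natural transformation arises from the monoidal action of $\cat{D}(A^{\mrm{en}})$ on $\cat{D}(A \ot_{\K} B^{\mrm{op}})$: for $N \in \cat{D}(A \ot_{\K} B^{\mrm{op}})$ there is a canonical morphism
\[
\mrm{R} \Ga_{\catt{T}}(A) \ot_A^{\mrm{L}} N \to \mrm{R} \Ga_{\catt{T}}(A \ot_A^{\mrm{L}} N) \cong \mrm{R} \Ga_{\catt{T}}(N),
\]
functorial in $N$, which serves as $\eta_N$.

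To make $\eta$ computable I would choose a K-injective resolution $A \to I$ in the category of complexes of $A$-bimodules, and set $J := \Ga_{\catt{T}}(I)$, so that $J \simeq P$ in $\cat{D}(A^{\mrm{en}})$. Weak stability of $\catt{T}$ is the key point here: it guarantees that every term of $J$ is $\catt{T}$-flasque as a left $A$-module, which is precisely what is needed to ensure that $J$ continues to represent $\mrm{R} \Ga_{\catt{T}}(A)$ after restricting to left modules. Combined with finite dimensionality, a smart truncation of $J$ yields a bounded complex of left-$A$-flasque bimodules representing $P$, and consequently the functor $P \ot_A^{\mrm{L}} (-)$ has bounded cohomological amplitude and commutes with arbitrary direct sums.

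At this stage both $P \ot_A^{\mrm{L}} (-)$ and $\mrm{R} \Ga_{\catt{T}}(-)$ are triangulated functors of bounded cohomological amplitude that commute with arbitrary direct sums (for the right-hand side this is quasi-compactness of $\catt{T}$), so a standard way-out argument reduces the problem to checking $\eta_N$ is an isomorphism on a single generator, which I take to be $N = A \ot_{\K} B^{\mrm{op}}$. For that $N$, flatness of $B$ over $\K$ together with quasi-compactness of $\catt{T}$ lets me pull the factor $(- \ot_{\K} B^{\mrm{op}})$ through both $\mrm{R} \Ga_{\catt{T}}$ and the derived tensor product, so $\eta_{A \ot_{\K} B^{\mrm{op}}}$ becomes the identity on $P \ot_{\K} B^{\mrm{op}}$. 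The main obstacle I anticipate is the middle step: manufacturing a bounded, left-$A$-flasque representative of $P$ out of a bimodule K-injective resolution. The interplay between weak stability, finite dimensionality, and the $\K$-flatness of $A$ is delicate, and without it one cannot control the derived tensor product $P \ot_A^{\mrm{L}} N$ as $N$ ranges over unbounded complexes of bimodules.
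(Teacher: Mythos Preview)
Your overall strategy matches the paper's: construct a natural comparison map and reduce to a generator using that both functors preserve coproducts. The construction of $\eta$ you sketch is essentially the paper's map $\gamma$.

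However, your ``middle step'' is both unnecessary and mis-argued. You claim that a bounded complex of left-$A$-flasque bimodules representing $P$ forces $P \ot_A^{\mrm{L}} (-)$ to have bounded cohomological amplitude. That is false: flasqueness is an acyclicity condition for $\mrm{R}\Ga_{\catt{T}}$, not for derived tensor products, and a bounded complex of flasque-but-not-flat modules need not have finite Tor-amplitude. Fortunately none of this is needed. The functor $P \ot_A^{\mrm{L}}(-)$ preserves arbitrary direct sums automatically, and $\mrm{R}\Ga_{\catt{T}}$ preserves direct sums on all of $\cat{D}(A)$ by Theorem~\ref{thm:1001}; \emph{that} is where finite dimensionality and quasi-compactness of $\catt{T}$ actually enter, not through any flasque model of $P$. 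With both functors coproduct-preserving, the complexes on which $\eta$ is an isomorphism form a localizing subcategory, so checking on a generator suffices. This is not a ``way-out'' argument in the sense of \cite{RD} and does not require bounded amplitude of either functor.

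Your final step is also slightly off. Reducing to $N = A \ot_{\K} B^{\mrm{op}}$ and then ``pulling $(- \ot_{\K} B^{\mrm{op}})$ through $\mrm{R}\Ga_{\catt{T}}$'' is not immediate from quasi-compactness: a flat $\K$-module need not be a direct sum of copies of $\K$, so you would still owe a filtered-colimit argument via Lazard's theorem. The paper sidesteps this entirely: it first uses conservativity of the restriction functor $\cat{D}(A \ot B^{\mrm{op}}) \to \cat{D}(A)$ (Proposition~\ref{prop:1071}) to drop $B$, and only then, inside $\cat{D}(A)$, reduces to the generator $M = A$, where the comparison map is the identity by inspection.
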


This is part of Theorem \ref{thm:1200}, which is more detailed.

There is a canonical morphism $\rho : P \to A$ in $\cat{D}(A^{\mrm{en}})$. 
In Theorem \ref{thm:1370} we prove that the pair $(P, \rho)$ is 
an {\em idempotent copointed object} in the monoidal category 
$\cat{D}(A^{\mrm{en}})$, in the sense of Definition \ref{dfn:1185}.

In the commutative weakly proregular situation, 
where $\catt{T}$ is the torsion class defined by an ideal $\a \sub A$ and 
$\K = A$, the object 
$P = \mrm{R} \Ga_{\catt{T}}(A)$ in $\cat{D}(A)$ 
is represented by the {\em infinite dual Koszul complex} 
$\opn{K}^{\vee}_{\infty}(A; \aa)$
associated to a weakly proregular sequence $\aa$ that generates $\a$. 
Positselski \cite{Po} calls the object $P$ a {\em dedualizing complex}. 

Theorems \ref{thm:1200}, \ref{thm:1370} and \ref{thm:1075} combined are the 
technical results needed to prove our remaining main theorems, that are stated 
below. 

Let $\catt{T} \sub \cat{M}(A)$ be a torsion class. 
As in the commutative setting, we say that a complex $M \in \cat{D}(A)$
is a {\em derived $\catt{T}$-torsion complex} if the canonical morphism
$\mrm{R} \Ga_{\catt{T}}(M) \to M$ is an isomorphism. 
The full subcategory of $\cat{D}(A)$ on the derived $\catt{T}$-torsion 
complexes is denoted by $\cat{D}(A)_{\catt{T} \tup{-tor}}$. 

Let 
$P = \mrm{R} \Ga_{\catt{T}}(A) \in \cat{D}(A^{\mrm{en}})$
be as in Theorem \ref{thm:1405}. It gives rise to a triangulated functor
\begin{equation} \label{eqn:1655}
G_{\catt{T}} : \cat{D}(A) \to \cat{D}(A) , \quad 
G_{\catt{T}} := \opn{RHom}_A(P, -) ,
\end{equation}
that we call the {\em abstract $\catt{T}$-completion functor}. 
A complex $M \in \cat{D}(A)$ is called a {\em derived $\catt{T}$-complete 
complex} if the canonical morphism
$M \to G_{\catt{T}}(M)$ is an isomorphism. 
The full subcategory of $\cat{D}(A)$ on the derived $\catt{T}$-complete 
complexes is denoted by $\cat{D}(A)_{\catt{T} \tup{-com}}$. 

The categories $\cat{D}(A)_{\catt{T} \tup{-tor}}$ and
$\cat{D}(A)_{\catt{T} \tup{-com}}$
are full triangulated subcategories of $\cat{D}(A)$.
See Sections \ref{sec:cop-obj} and \ref{sec:tors-to-obj} for more 
details. 

\begin{thm}[Noncommutative MGM Equivalence] \label{thm:1378}
Let $A$ be a flat central $\K$-ring, and let $\catt{T}$ be a 
quasi-compact, weakly stable, finite dimensional torsion class in $\cat{M}(A)$. 
Then\tup{:}
\begin{enumerate}
\item The functor $G_{\catt{T}}$ is right adjoint to $\mrm{R} \Ga_{\catt{T}}$. 

\item The functors $\mrm{R} \Ga_{\catt{T}}$ and $G_{\catt{T}}$ are 
idempotent. 

\item The categories $\cat{D}(A)_{\catt{T} \tup{-tor}}$
and $\cat{D}(A)_{\catt{T} \tup{-com}}$ 
are the essential images of the functors $\mrm{R} \Ga_{\catt{T}}$ and 
$G_{\catt{T}}$, respectively. 

\item The functor 
\[ \mrm{R} \Ga_{\catt{T}} : \cat{D}(A)_{\catt{T} \tup{-com}} \to 
\cat{D}(A)_{\catt{T} \tup{-tor}} \]
is an equivalence, with quasi-inverse $G_{\catt{T}}$. 
\end{enumerate}
\end{thm}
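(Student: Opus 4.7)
The plan is to derive all four statements from the two key structural results already established in the excerpt: Theorem \ref{thm:1405} (Representability) and Theorem \ref{thm:1370} (the pair $(P, \rho)$ is idempotent copointed). Specializing Theorem \ref{thm:1405} to $B = A$ identifies $\mrm{R} \Ga_{\catt{T}}$ with $P \ot_A^{\mrm{L}} (-)$ as a triangulated endofunctor of $\cat{D}(A)$, where $P = \mrm{R} \Ga_{\catt{T}}(A) \in \cat{D}(A^{\mrm{en}})$; and Theorem \ref{thm:1370} supplies the crucial isomorphism $\rho \ot_A^{\mrm{L}} \mrm{id}_P : P \ot_A^{\mrm{L}} P \to P$ in $\cat{D}(A^{\mrm{en}})$.

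Part (1) is then a restatement of the derived tensor--Hom adjunction for $P$: since $G_{\catt{T}} = \opn{RHom}_A(P, -)$, it is right adjoint to $P \ot_A^{\mrm{L}} (-) \cong \mrm{R} \Ga_{\catt{T}}$. Part (2) drops out by associativity of $\ot_A^{\mrm{L}}$ together with the idempotent copointed isomorphism:
\[
\mrm{R} \Ga_{\catt{T}} \circ \mrm{R} \Ga_{\catt{T}} \cong (P \ot_A^{\mrm{L}} P) \ot_A^{\mrm{L}} (-) \cong P \ot_A^{\mrm{L}} (-) \cong \mrm{R} \Ga_{\catt{T}},
\]
and dually
\[
G_{\catt{T}} \circ G_{\catt{T}} \cong \opn{RHom}_A(P \ot_A^{\mrm{L}} P, -) \cong \opn{RHom}_A(P, -) = G_{\catt{T}}.
\]
Part (3) is then immediate: if $M$ lies in the essential image of $\mrm{R} \Ga_{\catt{T}}$, then by (2) we have $\mrm{R} \Ga_{\catt{T}}(M) \cong M$, so $M$ is derived $\catt{T}$-torsion; conversely, any derived $\catt{T}$-torsion $M$ satisfies $M \cong \mrm{R} \Ga_{\catt{T}}(M)$ and so lies in the essential image. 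The argument for $G_{\catt{T}}$ and $\cat{D}(A)_{\catt{T}\tup{-com}}$ is symmetric.

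For part (4), by (3) the restrictions of $\mrm{R} \Ga_{\catt{T}}$ and $G_{\catt{T}}$ to the relevant subcategories land where they should, so the restricted functors are well defined. It remains to check that they are quasi-inverse, which amounts to showing that the unit $\eta : \mrm{id} \to G_{\catt{T}} \circ \mrm{R} \Ga_{\catt{T}}$ and counit $\epsilon : \mrm{R} \Ga_{\catt{T}} \circ G_{\catt{T}} \to \mrm{id}$ of the adjunction from (1) become isomorphisms on $\cat{D}(A)_{\catt{T}\tup{-com}}$ and $\cat{D}(A)_{\catt{T}\tup{-tor}}$, respectively. The principal obstacle will be the compatibility of $\eta$ and $\epsilon$ with the canonical morphisms $M \to G_{\catt{T}}(M)$ and $\mrm{R} \Ga_{\catt{T}}(M) \to M$ that define the two subcategories, since both families of natural transformations ultimately stem from the single copointing $\rho : P \to A$. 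Granting this compatibility, the argument closes: for $M \in \cat{D}(A)_{\catt{T}\tup{-tor}}$ one has $M \cong P \ot_A^{\mrm{L}} M$, and the counit $\mrm{R} \Ga_{\catt{T}}(G_{\catt{T}}(M)) \to M$ reduces, via a standard tensor--Hom manipulation and the isomorphism $P \ot_A^{\mrm{L}} P \cong P$, to an isomorphism; the dual argument handles the unit on $\cat{D}(A)_{\catt{T}\tup{-com}}$. If the general theory of idempotent copointed objects developed in Section \ref{sec:cop-obj} already packages this equivalence between the fixed subcategories, then part (4) will follow by direct citation of that theory.
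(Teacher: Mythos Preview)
Your approach is essentially the same as the paper's: reduce everything to the idempotent copointed object $(P,\rho)$ via Theorems \ref{thm:1405} and \ref{thm:1370}, and then read off (1)--(4) from the general machinery. The paper does exactly this, but more efficiently: it packages your entire argument for (1)--(4) into a single abstract result, Theorem \ref{thm:1075} (Abstract Equivalence), and the proof of Theorem \ref{thm:1400} is literally a citation of that theorem after identifying $(\mrm{R}\Ga_{\catt{T}},\si^{\mrm{R}}) \cong (F_{\catt{T}},\si)$ and $\cat{D}(A)_{\catt{T}\tup{-tor}} = \cat{D}(A)_{F_{\catt{T}}}$, $\cat{D}(A)_{\catt{T}\tup{-com}} = \cat{D}(A)_{G_{\catt{T}}}$. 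You correctly anticipated this in your final sentence.

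One point worth flagging: your sketch for part (4) is not quite complete as written. The claim that the counit $P \ot_A^{\mrm{L}} \opn{RHom}_A(P,M) \to M$ is an isomorphism for $M$ derived $\catt{T}$-torsion does not follow from ``$P \ot_A^{\mrm{L}} P \cong P$ plus standard tensor--Hom manipulation'' alone, because $\opn{RHom}_A(P,M)$ has no a priori tensor description. The paper's proof of Theorem \ref{thm:1075}(4) proceeds instead by completing $\rho$ to a distinguished triangle $P \to A \to N \to {}$, observing that $P \ot_A^{\mrm{L}} N = 0$ by idempotence, and then invoking the nontrivial Lemma \ref{lem:1180} (the kernels of $F$ and $G$ coincide) to deduce that the relevant third terms vanish. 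This cone-and-kernel argument is the missing ingredient in your direct sketch, though of course it is subsumed once you cite Theorem \ref{thm:1075}.
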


This is repeated -- in greater detail -- as Theorem \ref{thm:1400} in Section 
\ref{sec:NC-MGM}, and proved there.

Now to our fourth main result. We consider flat central $\K$-rings $A$ and $B$.
Let $\catt{T} \sub \cat{M}(A)$
and $\catt{S}^{\mrm{op}} \sub \cat{M}(B^{\mrm{op}})$
be torsion classes. These extend to bimodule torsion classes
\[ \catt{T} , \, \catt{S}^{\mrm{op}} \sub
\cat{M}(A \ot_{\K} B^{\mrm{op}})  \]
as follows: a bimodule
$M \in \cat{M}(A \ot_{\K} B^{\mrm{op}})$
is $\catt{T}$-torsion if it is so after forgetting the $B$-module 
structure. Likewise (but on reversed sides) for 
$\catt{S}^{\mrm{op}}$-torsion. There are corresponding derived torsion 
functors 
\[ \mrm{R} \Ga_{\catt{T}}, \, 
\mrm{R} \Ga_{\catt{S}^{\mrm{op}}} :
\cat{D}(A \ot_{\K} B^{\mrm{op}}) \to \cat{D}(A \ot_{\K} B^{\mrm{op}}) . \]
Consider a complex $M \in \cat{D}(A \ot_{\K} B^{\mrm{op}})$. 
We say that $M$ has {\em weakly symmetric derived 
$\catt{T}$-$\catt{S}^{\mrm{op}}$-torsion} if 
\[ \opn{H}^q(\mrm{R} \Ga_{\catt{T}}(M)) \, , \
\opn{H}^q(\mrm{R} \Ga_{\catt{S}^{\mrm{op}}}(M)) \ \in \  
\catt{T} \cap \catt{S}^{\mrm{op}} \]
for all $q$. The complex $M$ has {\em symmetric derived 
$\catt{T}$-$\catt{S}^{\mrm{op}}$-torsion} if there is an isomorphism 
\[ \ep_M : \mrm{R} \Ga_{\catt{T}}(M)) \iso 
\mrm{R} \Ga_{\catt{S}^{\mrm{op}}}(M) \]
in $\cat{D}(A \ot_{\K} B^{\mrm{op}})$, called a {\em symmetry isomorphism},
that respects the canonical morphisms to $M$. 
See Definition \ref{dfn:2000} for details. 
Of course symmetric implies weakly symmetric. 

\begin{thm}[Symmetric Derived Torsion] \label{thm:1379}
Let $A$ and $B$ be flat central $\K$-rings, and let $\catt{T} \sub \cat{M}(A)$ 
and $\catt{S}^{\mrm{op}} \sub \cat{M}(B^{\mrm{op}})$
be quasi-compact, weakly stable, finite dimensional torsion classes. 
Let $M \in \cat{D}(A \ot_{\K} B^{\mrm{op}})$
be a complex with weakly symmetric derived 
$\catt{T}$-$\catt{S}^{\mrm{op}}$-torsion.

Then $M$ has symmetric derived $\catt{T}$-$\catt{S}^{\mrm{op}}$-torsion. 
Moreover, the symmetry isomorphism $\ep_M$ is unique, and it is functorial in 
such complexes $M$. 
\end{thm}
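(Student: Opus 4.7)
The plan is to exploit the representability of Theorem~\ref{thm:1405} to realize both derived torsion functors as tensor products with bimodules, and then use associativity together with the weakly symmetric hypothesis to produce the symmetry isomorphism as a canonical zigzag through a ``doubly torsion'' object. By Theorem~\ref{thm:1405} applied to $\catt{T}$ and to $\catt{S}^{\mrm{op}}$, set
$P := \mrm{R}\Ga_{\catt{T}}(A) \in \cat{D}(A^{\mrm{en}})$ and
$Q := \mrm{R}\Ga_{\catt{S}^{\mrm{op}}}(B) \in \cat{D}(B^{\mrm{en}})$,
with canonical morphisms $\rho : P \to A$ and $\sigma : Q \to B$. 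Then
$\mrm{R}\Ga_{\catt{T}}(M) \cong P \ot_A^{\mrm{L}} M$ and
$\mrm{R}\Ga_{\catt{S}^{\mrm{op}}}(M) \cong M \ot_B^{\mrm{L}} Q$, with the canonical morphisms to $M$ induced by $\rho$ and $\sigma$. Associativity of the derived tensor product yields a canonical isomorphism
\[ \mrm{R}\Ga_{\catt{T}}\bigl(\mrm{R}\Ga_{\catt{S}^{\mrm{op}}}(M)\bigr)
\cong P \ot_A^{\mrm{L}} M \ot_B^{\mrm{L}} Q \cong
\mrm{R}\Ga_{\catt{S}^{\mrm{op}}}\bigl(\mrm{R}\Ga_{\catt{T}}(M)\bigr) , \]
in a manner compatible with the canonical morphisms down to $P \ot_A^{\mrm{L}} M$, $M \ot_B^{\mrm{L}} Q$, and $M$. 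Denote this common value by $N$.

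The key lemma to establish separately is the following: under our hypotheses on $\catt{T}$ (quasi-compact, weakly stable, finite dimensional), every complex $C \in \cat{D}(A \ot_{\K} B^{\mrm{op}})$ whose cohomology modules all lie in $\catt{T}$ is derived $\catt{T}$-torsion, i.e.\ the canonical morphism $\mrm{R}\Ga_{\catt{T}}(C) \to C$ is an isomorphism. The finite dimensional hypothesis is crucial here: it gives $\mrm{R}\Ga_{\catt{T}}$ a bounded cohomological amplitude, so via smart truncations and a devissage along the cohomological filtration one reduces to showing that any single module $V \in \catt{T}$ is $\catt{T}$-flasque, i.e.\ $\mrm{R}^q\Ga_{\catt{T}}(V) = 0$ for $q > 0$. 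This last assertion should follow from the weak stability of $\catt{T}$ by embedding $V$ into an injective $A$-module $I$, factoring this embedding through $\Ga_{\catt{T}}(I)$ (which is $\catt{T}$-flasque by hypothesis), and chasing the long exact sequence of $\mrm{R}\Ga_{\catt{T}}$ attached to the resulting short exact sequence. This lemma is the main technical obstacle of the proof.

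Granted this lemma, the hypothesis $\DH^q(\mrm{R}\Ga_{\catt{T}}(M)) \in \catt{S}^{\mrm{op}}$ forces $\mrm{R}\Ga_{\catt{T}}(M)$ to be derived $\catt{S}^{\mrm{op}}$-torsion, and symmetrically $\mrm{R}\Ga_{\catt{S}^{\mrm{op}}}(M)$ is derived $\catt{T}$-torsion. Combined with the display above this produces natural isomorphisms $N \iso \mrm{R}\Ga_{\catt{T}}(M)$ and $N \iso \mrm{R}\Ga_{\catt{S}^{\mrm{op}}}(M)$ compatible with the canonical morphisms to $M$, and their composite is the sought-after symmetry isomorphism $\ep_M$. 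Functoriality in $M$ is then built in, since every step of the construction is a composition of natural transformations.

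For uniqueness, I would invoke Theorem~\ref{thm:1378}: the functor $\mrm{R}\Ga_{\catt{T}}$ exhibits $\cat{D}(A)_{\catt{T}\tup{-tor}}$ as a coreflective subcategory of $\cat{D}(A)$, so for any derived $\catt{T}$-torsion complex $L$, composition with the canonical morphism $\mrm{R}\Ga_{\catt{T}}(M) \to M$ identifies $\opn{Hom}(L, \mrm{R}\Ga_{\catt{T}}(M))$ with $\opn{Hom}(L, M)$. Applied to $L = \mrm{R}\Ga_{\catt{S}^{\mrm{op}}}(M)$, which is derived $\catt{T}$-torsion by the key lemma, this characterizes $\ep_M$ as the \emph{unique} morphism $\mrm{R}\Ga_{\catt{T}}(M) \to \mrm{R}\Ga_{\catt{S}^{\mrm{op}}}(M)$ whose composition with the canonical morphism $\mrm{R}\Ga_{\catt{S}^{\mrm{op}}}(M) \to M$ recovers the canonical morphism $\mrm{R}\Ga_{\catt{T}}(M) \to M$, so any candidate symmetry isomorphism with the required compatibility must coincide with $\ep_M$.
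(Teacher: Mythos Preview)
Your proposal is correct and follows essentially the same route as the paper: representability via $P$ and $Q$, associativity of the derived tensor, the key lemma that cohomologically torsion implies derived torsion (this is Theorem~\ref{thm:2031}, proved exactly as you sketch via Proposition~\ref{prop:1165} plus a way-out argument), and then the zigzag through $P \ot_A^{\mrm{L}} M \ot_B^{\mrm{L}} Q$. One small slip in your uniqueness paragraph: the $\catt{T}$-coreflection with $L = \mrm{R}\Ga_{\catt{S}^{\mrm{op}}}(M)$ characterizes morphisms \emph{into} $\mrm{R}\Ga_{\catt{T}}(M)$, so to pin down $\ep_M$ in the direction you wrote you should instead invoke the $\catt{S}^{\mrm{op}}$-coreflection with source $\mrm{R}\Ga_{\catt{T}}(M)$ (which is derived $\catt{S}^{\mrm{op}}$-torsion); the paper itself just reads uniqueness off the resulting commutative diagram.
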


Theorem \ref{thm:1379} is repeated as Theorem \ref{thm:1401} in Section 
\ref{sec:NC-MGM}, and proved there. This theorem is a correction of 
\cite[Theorem 1.23]{YZ}; see Remark \ref{rem:1400} for details. 

Theorems \ref{thm:1378} and \ref{thm:1379} are expected to serve as the 
foundation for a proof (along the lines of the proofs by M. Van den Bergh in 
\cite{VdB} and Q.S. Wu and J.J. Zhang in \cite{WZ}) of the existence of a 
balanced dualizing complex
over a noncommutative ring $A$ that is noetherian,  semilocal, complete and of
mixed characteristics (i.e.\ it does not contain a field). 
This is outlined in the lecture notes \cite{Ye7}, and is work in progress 
\cite{VY}. 

{\bf Acknowledgments.}
We wish to thank James Zhang and Liran Shaul for helpful conversations. 
Thanks also to the anonymous referee, for a careful reading of the paper and 
some
useful suggestions.

\section{Quasi-Compact Finite Dimensional Functors} 
\label{sec:cpct-funcs}

In this section we discuss several finiteness properties of additive functors, 
that shall play a role in our work. 

Let $A$ be a ring. We work with left $A$-modules. These notations are used: 
the abelian category of $A$-modules is $\cat{M}(A)$, the category of complexes 
$A$-modules is $\cat{C}(A)$, its homotopy category is $\cat{K}(A)$, and the 
derived category is $\cat{D}(A)$.
The categorical localization functor is the triangulated functor 
$\opn{Q} : \cat{K}(A) \to \cat{D}(A)$.
As usual, $\cat{D}^+(A)$, $\cat{D}^-(A)$ and $\cat{D}^{\mrm{b}}(A)$
are the full subcategories of $\cat{D}(A)$ on the complexes with bounded below, 
bounded above and bounded cohomologies, respectively.  
We follow the book \cite{Ye6} in our treatment of derived categories and 
functors, with regards to definitions and notation. Other books on 
the subject include \cite{RD}, \cite{We}, \cite{KS1} and \cite{KS2}. 

For the purpose of describing vanishing conditions for complexes and functors,  
we shall use the following numerical conventions. 
By {\em generalized integer} we mean an element of the ordered set 
$\Z \cup  \{ \pm \infty \}$. A generalized integer $n$ will be called 
{\em finite} if $n < \infty$, i.e.\ if $n \in \Z \cup  \{ -\infty \}$.
This somewhat unusual choice of nomenclature will be quite handy. 

Given generalized integers $d_0 \leq d_1$,
the {\em integer interval} they bound is
\[ [d_0, d_1] := \{ i \in \Z \mid d_0 \leq i \leq d_1 \} . \]
Observe that this will be the empty interval $\varnothing$ when 
$d_0 = d_1 = \infty$ or $d_0 = d_1 = -\infty$. 
If $[e_0, e_1]$ is another integer interval, 
and both are nonempty, then we let 
\[ [d_0, d_1] + [e_0, e_1] := [d_0 + e_0, d_1 + e_1] . \]
For the empty interval $\varnothing$, and another interval $S$, we let 
\[ S + \varnothing  = \varnothing + S := \varnothing . \]
The integer intervals are partially ordered by inclusion. 

An integer interval $S$ has a supremum $\opn{sup}(S)$ and an infimum 
$\opn{inf}(S)$, that are both generalized integers. The amplitude of $S$ is 
\[ \opn{amp}(S) := \opn{sup}(S) - \opn{inf}(S) \in \N \cup  \{ \pm \infty \} . 
\]
Note that for a nonempty interval $S = [d_0, d_1]$ we have 
$\opn{sup}(S) = d_1$, $\opn{inf}(S) = d_0$ and 
$\opn{amp}(S) = d_1 - d_0 \in \N \cup  \{ \infty \}$. 
For the empty interval $S = \varnothing$ we have
$\opn{sup}(S) = -\infty$, $\opn{inf}(S) = \infty$ and 
$\opn{amp}(S) = -\infty$. 

Let $N = \bigoplus_{i \in \Z} N^i$ be a graded $A$-module. 
The {\em concentration} of $N$ is the smallest integer interval 
$\opn{con}(N)$ containing the 
set $\{ i \in \Z \mid N^i \neq 0 \}$. 
We use the abbreviations 
$\opn{inf}(N) := \opn{inf}(\opn{con}(N))$,
$\opn{sup}(N) := \opn{sup}(\opn{con}(N))$ and
$\opn{amp}(N) := \opn{amp}(\opn{con}(N))$.

Using this numerical terminology, a complex $M$ belongs to 
$\cat{D}^{\mrm{b}}(A)$ if and only if $\opn{amp}(\opn{H}(M)) < \infty$;
$M \in \cat{D}^+(A)$ if and only if 
$\opn{inf}(\opn{H}(M)) >  -\infty$; etc.   

It is well-known (see \cite{Sp}, \cite{BN}, \cite{Ke}, \cite[Chapter 
09JD]{SP} or \cite[Section 11]{Ye6}) 
that every complex $M \in \cat{C}(A)$ admits a quasi-isomorphism
$M \to I$, where $I$ is a K-injective complex, each $I^q$ is an injective 
$A$-module, and 
$\opn{inf}(I) =  \opn{inf}(\opn{H}(M))$. 

Let $B$ be another ring, and let $F : \cat{M}(A) \to \cat{M}(B)$ be an 
additive functor. The functor $F$ extends in the obvious way to a  
functor $F : \cat{C}(A) \to \cat{C}(B)$
on complexes of modules, and this induces a triangulated functor 
$F : \cat{K}(A) \to \cat{K}(B)$.
The triangulated functor $F$ admits a right derived functor 
$(\mrm{R} F, \xi^{\mrm{R}})$. Recall that 
\[ \mrm{R} F : \cat{D}(A) \to \cat{D}(B) \]
is a triangulated functor, and 
\[ \xi^{\mrm{R}} : F \to \mrm{R} F \circ \opn{Q} \]
is a morphism of triangulated functors 
$\cat{K}(A) \to \cat{D}(B)$
that has a certain universal property. 
The right derived functor $(\mrm{R} F, \xi^{\mrm{R}})$
can be constructed using a K-injective presentation: for each 
$M \in  \cat{K}(A)$ we choose a K-injective resolution 
$\ze_M : M \to I_M$, and then we take 
$\mrm{R} F(M) := F(I_M)$ and $\xi^{\mrm{R}}_M := F(\ze_M)$. 
See \cite[Section 8]{Ye6}. 

Let $F : \cat{M}(A) \to \cat{M}(B)$ be a left exact functor.
The classical right derived functors of $F$ are 
\[ \mrm{R}^q F = \opn{H}^q(\mrm{R} F) : \cat{M}(A) \to \cat{M}(B) . \]
Recall that the {\em right cohomological dimension of $F$} is 
\[ n := \opn{sup} \, \{ q \in \N \mid \mrm{R}^q F \neq 0 \} \in 
\N \cup \{ \pm \infty \} . \]
If $F \neq 0$ then $n \in \N \cup \{ \infty \}$, but for $F = 0$ the 
dimension is $n = -\infty$. 
Our convention regarding finiteness (a generalized integer $n$ is finite if and 
only if $n < \infty$) was designed to give the zero functor 
finite right cohomological dimension. 

\begin{dfn} \label{dfn:1055}
Let $F : \cat{M}(A) \to \cat{M}(B)$ be a left exact additive functor.
A module $I \in \cat{M}(A)$ is called a {\em right $F$-acyclic module} if 
$\mrm{R}^q F(I) = 0$ for all $q > 0$. 
\end{dfn}

Of course every injective $A$-module is a right $F$-acyclic 
module. But often there are many more right $F$-acyclic modules.

\begin{dfn} \label{dfn:1300}
Let $\cat{M}$ and $\cat{N}$ be additive categories that admit infinite direct 
sums, and let $F : \cat{M} \to \cat{N}$ be an additive functor.
The functor $F$ is called {\em quasi-compact} if it commutes with infinite 
direct sums. Namely, for every collection 
$\{ M_x \}_{x \in X}$ of objects of $\cat{M}$, indexed by some set $X$, 
the canonical morphism
\[ \bigoplus\nolimits_{x \in X} F(M_x) \to 
F \bigl( \bigoplus\nolimits_{x \in X} M_x \bigr) \]
in $\cat{N}$ is an isomorphism. 
\end{dfn}

The name ``quasi-compact functor'' is inspired by the property of pushforward 
of 
quasi-coherent sheaves along a quasi-compact map of schemes. 

\begin{lem} \label{lem:1056}
Let  $F : \cat{M}(A) \to \cat{M}(B)$ be a left exact additive functor, and 
assume that all the functors $\mrm{R}^q F$ are quasi-compact. Let 
$\{ I_x \}_{x \in X}$ be a collection of right $F$-acyclic $A$-modules.
Then the $A$-module $I := \bigoplus_{x \in X} I_x$ is right $F$-acyclic.
\end{lem}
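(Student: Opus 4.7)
The plan is to unwind the definitions and apply the quasi-compactness hypothesis directly to each cohomological functor $\mrm{R}^q F$. To show that $I = \bigoplus_{x \in X} I_x$ is right $F$-acyclic, I need to verify that $\mrm{R}^q F(I) = 0$ for every $q > 0$. Fix such a $q$. Since $\mrm{R}^q F$ is assumed quasi-compact (in the sense of Definition \ref{dfn:1300}), the canonical morphism
\[ \bigoplus\nolimits_{x \in X} \mrm{R}^q F(I_x) \to \mrm{R}^q F \bigl( \bigoplus\nolimits_{x \in X} I_x \bigr) = \mrm{R}^q F(I) \]
in $\cat{M}(B)$ is an isomorphism. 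Each summand $\mrm{R}^q F(I_x)$ vanishes by the assumption that $I_x$ is right $F$-acyclic (Definition \ref{dfn:1055}), so the direct sum on the left is zero, and hence $\mrm{R}^q F(I) = 0$.

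There is essentially no obstacle here: the lemma is a direct formal consequence of the quasi-compactness assumption on the derived functors $\mrm{R}^q F$ together with the definition of right $F$-acyclicity. The only subtlety worth noting is that the hypothesis is phrased in terms of quasi-compactness of the individual $\mrm{R}^q F$ separately (not, e.g., of $\mrm{R} F$ as a functor on the derived category); this is exactly what is needed so that the argument works in each cohomological degree without any boundedness or spectral sequence considerations.
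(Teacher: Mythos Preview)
Your proof is correct and is essentially identical to the paper's own proof: fix $q > 0$, use quasi-compactness of $\mrm{R}^q F$ to identify $\mrm{R}^q F(I)$ with $\bigoplus_x \mrm{R}^q F(I_x)$, and note each summand vanishes by acyclicity. Your closing remark about the hypothesis being on each $\mrm{R}^q F$ separately is a helpful clarification not present in the paper.
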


\begin{proof}
Take any $q > 0$. Because $\mrm{R}^qF$ is quasi-compact, the canonical 
homomorphism 
\[ \bigoplus_{x \in X} \, \mrm{R}^q F(I_x) \to \mrm{R}^q F(I)  \]
in $\cat{M}(B)$ is an isomorphism. But by assumption, $\mrm{R}^q F(I_x) = 0$ 
for 
all $x$. 
\end{proof}

\begin{dfn} \label{dfn:1301}
Let $F : \cat{M}(A) \to \cat{M}(B)$ be an additive functor. 
A complex $I \in \cat{C}(A)$ is called a {\em right $F$-acyclic complex} 
if the morphism $\xi^{\mrm{R}}_I : F(I) \to \mrm{R} F(I)$
in $\cat{D}(B)$ is an isomorphism. 
\end{dfn}

Of course every K-injective complex is right $F$-acyclic, but often there 
are others.

In case $F$ is a left exact functor, so that Definition \ref{dfn:1055} applies, 
it is easy to see that an $A$-module $I$ is right $F$-acyclic if and only if 
it is right $F$-acyclic as a complex; i.e.\ Definitions \ref{dfn:1301} and 
\ref{dfn:1055} agree in this case. 

\begin{lem} \label{lem:1057}
Let $F : \cat{M}(A) \to \cat{M}(B)$ be a left exact additive functor, and let 
$I \in \cat{C}(A)$ be a complex such that each of the modules $I^q$ is right 
$F$-acyclic. If $I$ is a bounded below complex, or if $F$ has finite right 
cohomological dimensional, then $I$ is a right $F$-acyclic complex. 
\end{lem}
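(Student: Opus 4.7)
The plan is to reduce the lemma to a claim about acyclic complexes of $F$-acyclic modules. Using the K-injective resolution recalled just before the lemma, choose a quasi-isomorphism $\ze : I \to J$ with each $J^q$ an injective $A$-module and $\opn{inf}(J) = \opn{inf}(\opn{H}(I))$; in particular $J$ is bounded below whenever $I$ is. Since $\mrm{R}F(I) \cong F(J)$ in $\cat{D}(B)$, the conclusion that $\xi^{\mrm{R}}_I$ is an isomorphism is equivalent to $F(C)$ being acyclic, where $C := \opn{cone}(\ze)$. Observe that $C$ is acyclic, each $C^q \cong I^{q+1} \oplus J^q$ is $F$-acyclic (a direct sum of two $F$-acyclic modules), and $C$ inherits boundedness below from $I$.

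We therefore reduce to the following key assertion: if $C$ is an acyclic complex with every term $C^q$ being $F$-acyclic, and either $C$ is bounded below or $F$ has finite right cohomological dimension, then $F(C)$ is acyclic. Set $B^q := \opn{ker}(d^q_C) = \opn{im}(d^{q-1}_C)$; acyclicity of $C$ yields short exact sequences
\[ 0 \to B^q \to C^q \to B^{q+1} \to 0 \]
for every $q \in \Z$. The goal is to show that each $B^q$ is itself $F$-acyclic, because then applying $F$ produces short exact sequences $0 \to F(B^q) \to F(C^q) \to F(B^{q+1}) \to 0$ in $\cat{M}(B)$, which splice together to exhibit $F(C)$ as acyclic.

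For the bounded below case, say $C^q = 0$ for $q < n$, we induct starting from the trivially $F$-acyclic $B^n = 0$: given that $B^q$ is $F$-acyclic, the long exact sequence of $\mrm{R}^\bullet F$ attached to the above short exact sequence, together with the $F$-acyclicity of $C^q$, forces $\mrm{R}^r F(B^{q+1}) = 0$ for every $r \geq 1$. For the finite dimensional case, the same long exact sequence yields dimension-shifting isomorphisms
\[ \mrm{R}^r F(B^q) \cong \mrm{R}^{r-1} F(B^{q+1}) \quad \text{for } r \geq 2, \]
which iterate to $\mrm{R}^r F(B^q) \cong \mrm{R}^1 F(B^{q+r-1})$ for every $r \geq 1$. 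If $d$ is the right cohomological dimension of $F$, then taking $r = d+1$ produces $\mrm{R}^1 F(B^{q+d}) = \mrm{R}^{d+1} F(B^q) = 0$; since $q$ is arbitrary, $\mrm{R}^1 F(B^p) = 0$ for every $p \in \Z$, and feeding this back into the iteration yields $\mrm{R}^r F(B^p) = 0$ for all $r \geq 1$ and all $p$.

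The main subtlety is the finite dimensional case, since the absence of any lower bound on $C$ removes the natural base for induction; the dimension-shifting identity together with finiteness of the cohomological dimension of $F$ is precisely what converts an apparently inductive question into a simultaneous vanishing statement across all cycle modules.
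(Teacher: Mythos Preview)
Your proof is correct and follows the standard dimension-shifting argument. The paper does not give its own proof of this lemma but instead cites \cite[Corollary I.5.3]{RD} and \cite[Lemma 16.1.5]{Ye6}; your reduction via the cone of a K-injective resolution, followed by the cycle-module splicing argument, is precisely the detailed version of what those references contain.
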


\begin{proof}
A proof of this assertion is sketched within the proof of 
\cite[Corollary I.5.3]{RD}. A detailed argument can be found in 
\cite[version 3, Lemma 16.1.5]{Ye6}.
\end{proof}

Let $F : \cat{D}(A) \to \cat{D}(B)$
be a triangulated functor, and let $\cat{E} \subseteq \cat{D}(A)$
be a class of objects. The {\em cohomological displacement} of $F$ relative 
to $\cat{E}$ is the smallest integer interval $S$ such that 
\[ \opn{con}(\opn{H}(F(M))) \subseteq \opn{con}(\opn{H}(M)) + S \]
for every $M \in \cat{E}$. 
The {\em cohomological dimension} of $F$ relative 
to $\cat{E}$ is the amplitude of its cohomological displacement.
Note that for the zero functor $F$, its cohomological displacement is the empty 
interval, and its cohomological dimension is $-\infty$. 
It is clear that if $\cat{E} \subseteq \cat{E}'$, then the  cohomological 
dimension of $F$ relative to $\cat{E}'$ is greater than or equal to its 
dimension relative to $\cat{E}$. 

\begin{dfn} \label{dfn:1000}
Let $F : \cat{D}(A) \to \cat{D}(B)$ be a triangulated functor.
The {\em cohomological dimension} of $F$ is its cohomological dimension 
relative to $\cat{D}(A)$.
\end{dfn}

Note that for a left exact additive functor 
$F : \cat{M}(A) \to \cat{M}(B)$,
the cohomological dimension of the triangulated functor 
$\mrm{R} F : \cat{D}(A) \to \cat{D}(B)$ relative to the subclass
$\cat{M}(A) \subseteq \cat{D}(A)$ equals the 
right cohomological dimension of the functor $F$. As mentioned above, this 
number is less than or equal to the cohomological dimension of $\mrm{R} F$. But 
it turns out that these two generalized integers are equal:

\begin{prop} \label{prop:1160}
Let $F : \cat{M}(A) \to \cat{M}(B)$ be a left exact additive functor. 
Then the cohomological dimension of 
$\mrm{R} F : \cat{M}(A) \to \cat{M}(B)$ equals the 
right cohomological dimension of $F$. 
\end{prop}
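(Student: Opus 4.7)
The strategy is to prove both $d \geq n$ and $d \leq n$, where $n$ is the right cohomological dimension of $F$ and $d$ is the cohomological dimension of $\mrm{R}F$. The inequality $d \geq n$ is immediate: since $\cat{M}(A) \sub \cat{D}(A)$, the displacement of $\mrm{R}F$ relative to $\cat{D}(A)$ contains its displacement relative to $\cat{M}(A)$; for a module $N$ placed in degree $0$, left-exactness of $F$ together with the definition of $n$ force $\opn{con}(\opn{H}(\mrm{R}F(N))) \sub [0, n]$, with the upper bound $n$ attained for some $N$, so the overall displacement has amplitude at least $n$.

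For $d \leq n$, we may assume $n$ is finite; if $n = -\infty$ (i.e.\ $F = 0$) there is nothing to show, so assume $n \in \N$. Given $M \in \cat{D}(A)$, set $a := \opn{inf}(\opn{H}(M))$ and $b := \opn{sup}(\opn{H}(M))$; the goal is $\opn{con}(\opn{H}(\mrm{R}F(M))) \sub [a, b + n]$. The lower bound $\opn{inf}(\opn{H}(\mrm{R}F(M))) \geq a$ follows from the existence (recalled just before Definition \ref{dfn:1055}) of a K-injective resolution $M \to I$ with $\opn{inf}(I) = a$, since then $F(I)^q = 0$ for $q < a$. The upper bound $\opn{sup}(\opn{H}(\mrm{R}F(M))) \leq b + n$ is trivial when $b = \infty$, so assume $b$ is finite.

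The plan for the upper bound is to smartly truncate a K-injective resolution. Pick any K-injective $M \to I$. Since $\opn{H}^q(I) = 0$ for $q > b$ and $b + n \geq b$, the smart truncation $J := \tau^{\leq b + n} I$ is quasi-isomorphic to $I$; explicitly $J^q = I^q$ for $q < b + n$, $J^{b+n} = Z^{b+n} := \opn{ker}(I^{b+n} \to I^{b+n+1})$, and $J^q = 0$ for $q > b + n$. Each $I^q$ is injective and hence right $F$-acyclic, so the decisive task is to show that $Z^{b+n}$ is also right $F$-acyclic. Granting this, Lemma \ref{lem:1057} (applicable because $F$ has finite right cohomological dimension) gives that $J$ is a right $F$-acyclic complex, so $\mrm{R}F(M) \cong F(J)$ in $\cat{D}(B)$, and the vanishing $F(J)^q = 0$ for $q > b + n$ delivers the bound.

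The main step is verifying the acyclicity of $Z^{b+n}$ via iterated dimension shifting. For each $j \geq b + 1$, the vanishing $\opn{H}^j(I) = 0$ yields a short exact sequence $0 \to Z^{j-1} \to I^{j-1} \to Z^j \to 0$ with $I^{j-1}$ injective, and its long exact sequence of right derived functors gives $\mrm{R}^q F(Z^j) \cong \mrm{R}^{q+1} F(Z^{j-1})$ for all $q \geq 1$. Iterating this $n$ times, from $j = b + n$ down to $j = b + 1$, produces $\mrm{R}^q F(Z^{b+n}) \cong \mrm{R}^{q+n} F(Z^b)$ for $q \geq 1$, and the right-hand side vanishes because $q + n > n$. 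A mild subtlety is that the iteration terminates at $Z^b$, which is a genuine kernel rather than a coboundary, but the short exact sequence at the step $j = b + 1$ remains valid thanks to $\opn{H}^{b+1}(I) = 0$.
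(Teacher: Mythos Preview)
Your proof is correct. The approach is essentially the same as the paper's: both arguments hinge on the fact that beyond degree $b$ the injective resolution $I$ becomes an injective resolution of the cycle module $Z^b$, so that $\mrm{R}^p F$ applied to the relevant cycle modules vanishes above degree $n$. The paper packages this by directly identifying $\opn{H}^q(F(I)) \cong \mrm{R}^{q-b} F(Z^b)$ for $q > b$, whereas you truncate at $b+n$, use dimension shifting to show $Z^{b+n}$ is right $F$-acyclic, and then invoke Lemma~\ref{lem:1057}. Your route is a step longer but equally valid.

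One minor sloppiness: for the upper bound you write ``pick any K-injective $M \to I$'' and then assert ``each $I^q$ is injective''. Not every K-injective complex has injective components, so you should specify (as you did for the lower bound, citing the passage before Definition~\ref{dfn:1055}) that the K-injective resolution is chosen with each $I^q$ injective.
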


\begin{proof}
We can assume that $F \neq 0$, and it has finite right 
cohomological dimension, say $d \in \N$. 
We shall prove that the cohomological displacement of $\mrm{R} F$ is 
$[0, d]$. 

By considering a module $M$ for which $F(M) \neq 0$, 
and a module $N$ for which $\mrm{R}^d F(M) \neq 0$, we see that 
the cohomological displacement of $\mrm{R} F$ contains the integer interval 
$[0, d]$. 

We must prove the converse, namely that for every  $M \in \cat{D}(A)$
the interval $\opn{con}(\opn{H}(\mrm{R} F(M)))$ is contained in the interval 
$\opn{con}(\opn{H}(M)) + [0, d]$. This is done by cases, and we can assume that 
$M \neq 0$. 

\medskip \noindent 
Case 1. If $M$ is not in $\cat{D}^{-}(A)$, 
i.e.\ $\opn{con}(\opn{H}(M)) = [d_0, \infty]$ for some 
$d_0 \in \Z \cup \{ -\infty \}$, then we can take a K-injective resolution 
$M \to I$ such that $\opn{inf}(I) = d_0$. Then 
$\opn{H}(\mrm{R} F(M)) = \opn{H}(F(I))$
is concentrated in $[d_0, \infty]$. 

\medskip \noindent 
Case 2. Here $M \in \cat{D}^{-}(A)$, so that 
$\opn{con}(\opn{H}(M)) = [d_0, d_1]$ for some 
$d_0 \in \Z \cup \{ -\infty \}$ and $d_1 \in \Z$. 
We now take a  K-injective resolution $M \to I$ such that $\opn{inf}(I) = d_0$
and $I$ is made up of injective $A$-modules. We must prove that 
$\opn{H}^q(F(I)) = 0$ for all $q > d_1 + d$. This is done like in the 
proof of Lemma \ref{lem:1057}. 
Let $N := \opn{Z}^{d_1}(I)$, and let 
\[ J := (\cdots \to 0 \to I^{d_1} \to  I^{d_1 + 1} \to \cdots) , \]
the complex with $I^{d_1}$ placed in degree $0$. 
So there is a quasi-isomorphism $N \to J$, and it is an injective resolution of 
$N$. Hence for every $p > d$, letting $q := d_1 + p$, we have
\[ \opn{H}^{q}(F(I))\cong \opn{H}^{p}(F(J))
\cong \opn{H}^p(\mrm{R} F(N)) = 0 . \]
\end{proof}

It is known that the category $\cat{D}(A)$ has infinite direct sums, and they 
are the same as the direct sums in $\cat{C}(A)$. See \cite[Lemma 1.5]{BN} or 
\cite[Theorem 10.1.15]{Ye6}.
Therefore Definition \ref{dfn:1300} applies to triangulated functors 
$F : \cat{D}(A) \to \cat{D}(B)$, 
and we can talk about quasi-compact triangulated functors. 

\begin{exa} \label{exa:1220}
Suppose $B = \Z$, and $F : \cat{D}(A) \to \cat{D}(\Z)$ 
is the triangulated functor $F = \opn{RHom}_A(P, -)$ for some complex 
$P \in \cat{D}(A)$. Recall that the complex $P$ is called a {\em compact 
object of $\cat{D}(A)$} if the functor $F$ is quasi-compact, in the sense of 
Definition \ref{dfn:1300}. It is known that $P$ is a compact object of 
$\cat{D}(A)$ if and only if $P$ is isomorphic to a bounded complex of finitely 
generated projective $A$-modules; see \cite{BV} or
 \cite[version 3, Theorems 14.1.23 and 14.1.26]{Ye6}. Therefore in this case, 
if $F$ is a quasi-compact functor then it is also a finite dimensional functor, 
in the sense of Definition \ref{dfn:1000}. (In general these attributes are 
independent of each other.) 
\end{exa}

\begin{exa} \label{exa:1221}
Assume $F = P \ot_A^{\mrm{L}} (-)$ for some complex 
$P \in \cat{D}(A^{\mrm{op}})$; here $A^{\mrm{op}}$ is the opposite ring. Again 
the target of $F$ is $\cat{D}(\Z)$.
Then $F$ is a quasi-compact functor. It is a finite dimensional functor if and 
only if the complex $P$ has finite flat dimension, i.e.\ if $P$ is isomorphic 
to 
a bounded complex of flat $A^{\mrm{op}}$-modules. 
\end{exa}

\begin{lem} \label{lem:1058}
Let $F : \cat{M}(A) \to \cat{M}(B)$ be a left exact additive functor, such that 
all the functors $\mrm{R}^q F$ are quasi-compact.
Let $\{ I_x \}_{x \in X}$ be a collection of right $F$-acyclic 
complexes in $\cat{C}(A)$, and define 
$I := \bigoplus_{x \in X} I_x$. 
If $I$ is a bounded below complex, or if $F$ has finite right cohomological 
dimension, then $I$ is a right $F$-acyclic complex.
\end{lem}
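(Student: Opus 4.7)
The plan is to reduce Lemma \ref{lem:1058} to Lemma \ref{lem:1057} by resolving each $I_x$ individually into a complex of injective modules and summing. Quasi-compactness of the $\mrm{R}^q F$ will force the degree-wise direct sum of these injectives to remain right $F$-acyclic, by Lemma \ref{lem:1056}.

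First I would choose, for each $x \in X$, a quasi-isomorphism $\zeta_x : I_x \to J_x$ in which $J_x$ is K-injective, each component $J_x^q$ is an injective $A$-module, and
\[ \opn{inf}(J_x) = \opn{inf}(\opn{H}(I_x)) ; \]
this is permitted by the existence result quoted just before Definition \ref{dfn:1055}. I would then set $J := \bigoplus_{x} J_x$ and $\zeta := \bigoplus_{x} \zeta_x : I \to J$. Since direct sums in $\cat{M}(A)$ are exact, $\zeta$ is itself a quasi-isomorphism.

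Next I would verify the hypotheses of Lemma \ref{lem:1057} for $J$. Each $J^q = \bigoplus_x J_x^q$ is a direct sum of injective, hence right $F$-acyclic, modules, so Lemma \ref{lem:1056} implies that every $J^q$ is right $F$-acyclic. If $I$ is bounded below by some $n_0 \in \Z$, then $I_x^q = 0$ for all $x$ and all $q < n_0$, so $\opn{inf}(\opn{H}(I_x)) \geq n_0$ for every $x$, and therefore $\opn{inf}(J_x) \geq n_0$ by our choice of resolution; it follows that $J$ itself is bounded below by $n_0$. Otherwise $F$ has finite right cohomological dimension by hypothesis. In either scenario Lemma \ref{lem:1057} applies and yields that $\xi^{\mrm{R}}_J : F(J) \iso \mrm{R} F(J)$ in $\cat{D}(B)$.

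To conclude, I would show that $F(\zeta) : F(I) \to F(J)$ is also a quasi-isomorphism. Quasi-compactness of $F = \mrm{R}^0 F$ gives $F(I) = \bigoplus_x F(I_x)$, $F(J) = \bigoplus_x F(J_x)$, and $F(\zeta) = \bigoplus_x F(\zeta_x)$. Since $I_x$ is right $F$-acyclic and $J_x$ is a K-injective resolution of it, each $F(\zeta_x)$ represents the canonical map $\xi^{\mrm{R}}_{I_x}$ in $\cat{D}(B)$, hence is a quasi-isomorphism; a direct sum of quasi-isomorphisms is again a quasi-isomorphism, so $F(\zeta)$ is one. Applied to $\zeta$, naturality of $\xi^{\mrm{R}}$ then yields a commutative square in $\cat{D}(B)$ whose horizontal arrows $F(\zeta)$ and $\mrm{R} F(\zeta)$ are isomorphisms and whose right vertical arrow $\xi^{\mrm{R}}_J$ is an isomorphism by the previous paragraph; therefore $\xi^{\mrm{R}}_I$ is also an isomorphism, which is the desired conclusion. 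The only real obstacle is the bookkeeping in the bounded-below case: one must ensure the $J_x$ have a uniform lower bound, which is exactly why we exploit the freedom $\opn{inf}(J_x) = \opn{inf}(\opn{H}(I_x))$ in the existence theorem. Beyond that, everything is a formal direct-sum argument built on the quasi-compactness hypothesis on the $\mrm{R}^q F$.
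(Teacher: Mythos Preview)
Your proof is correct and follows essentially the same approach as the paper: resolve each $I_x$ by a K-injective complex $J_x$ of injectives with controlled lower bound, sum to get $J$, use Lemmas \ref{lem:1056} and \ref{lem:1057} to see $J$ is right $F$-acyclic, show $F(\zeta)$ is a quasi-isomorphism via quasi-compactness of $F$ and right $F$-acyclicity of each $I_x$, and conclude by the naturality square for $\xi^{\mrm{R}}$. The only cosmetic difference is that the paper bounds $\opn{inf}(J_x)$ by $\opn{inf}(I_x)$ rather than $\opn{inf}(\opn{H}(I_x))$, and spells out the diagram chase for ``$F(\zeta_x)$ is a quasi-isomorphism'' more explicitly.
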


\begin{proof}
For every index $x$ let choose a quasi-isomorphism 
$\phi_x : I_x \to J_x$, where $J_x$ is a K-injective complex consisting of 
injective $A$-modules, and 
$\opn{inf}(J_x) \geq \opn{inf}(I_x)$. Define 
$J := \bigoplus_{x \in X} J_x$.
We get a quasi-isomorphism 
\[ \phi := \bigoplus_{x \in X} \, \phi_x : I \to J. \]
By construction, if $I$ is a bounded below complex, then so is $J$. 

For every $x$ there is a commutative diagram 
\[ \UseTips \xymatrix @C=9ex @R=6ex {
F(I_x)
\ar[r]^{F(\phi_x)}
\ar[d]_{\xi^{\mrm{R}}_{I_x}}
& 
F(J_x)
\ar[d]^{\xi^{\mrm{R}}_{J_x}}
\\
\mrm{R} F(I_x)
\ar[r]^(0.5){\mrm{R} F(\phi_x)}
& 
\mrm{R} F(J_x)
} \]
in $\cat{D}(A)$. The vertical arrows are isomorphisms because both $I_x$ and 
$J_x$ are right $F$-acyclic complexes. The morphism $\mrm{R} F(\phi_x)$ is 
also an isomorphism. It follows that $F(\phi_x)$ is an isomorphism in 
$\cat{D}(A)$; and therefore it is a quasi-isomorphism in $\cat{C}(A)$. 

Next consider this commutative diagram in $\cat{C}(A)$~:
\[ \UseTips \xymatrix @C=14ex @R=6ex {
\bigoplus_{x \in X} \, F(I_x)
\ar[r]^{ \bigoplus_{x \in X} \, F(\phi_x) }
\ar[d]_{}
& 
\bigoplus_{x \in X} \, F(J_x)
\ar[d]
\\
F(I)
\ar[r]^{ F(\phi) }
& 
F(J)
} \]
The previous paragraph, and the fact that cohomology commutes with infinite 
direct sums, tell us that the top horizontal arrow is a quasi-isomorphism.
Because the functor $F = \mrm{R}^0 F$ is quasi-compact, the vertical arrows are 
isomorphisms. We conclude that $F(\phi)$ is a quasi-isomorphism. 

Finally we look at this  commutative diagram in $\cat{D}(A)$~:
\[ \UseTips \xymatrix @C=8ex @R=6ex {
F(I)
\ar[r]^{F(\phi)}
\ar[d]_{\xi^{\mrm{R}}_I}
& 
F(J)
\ar[d]^{\xi^{\mrm{R}}_J}
\\
\mrm{R} F(I)
\ar[r]^{ \mrm{R} F(\phi) }
& 
\mrm{R} F(J)
} \]
We know that the morphisms $F(\phi)$ and $\mrm{R} F(\phi)$ are isomorphisms. 
For every $p$, the module 
$J^p = \bigoplus_{x \in X} \, J_x^p$ 
is a direct sum of injective modules. So according to Lemma \ref{lem:1056}, 
$J^p$ is a right $F$-acyclic module.
By Lemma \ref{lem:1057} (in either of the two cases) the complex $J$ is a 
right $F$-acyclic complex.
This says that the morphism $\xi^{\mrm{R}}_J$ is an isomorphism.
We conclude that the morphism $\xi^{\mrm{R}}_I$ is an isomorphism too, and this 
says that $I$ is a right $F$-acyclic complex.
\end{proof}

\begin{thm} \label{thm:1001}
Let $F : \cat{M}(A) \to \cat{M}(B)$ be a left exact additive functor.
Assume that $F$ has finite right cohomological dimension, and each 
$\mrm{R}^q F$ is quasi-compact. Then the triangulated functor
$\mrm{R} F : \cat{D}(A) \to \cat{D}(B)$ 
is quasi-compact and finite dimensional.
\end{thm}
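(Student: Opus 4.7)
The plan is to split the statement into its two parts and handle them separately, each reducing to a result already established in the excerpt.

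Finite dimensionality is immediate: by Proposition \ref{prop:1160}, the cohomological dimension of $\mrm{R} F$ equals the right cohomological dimension of $F$, which is finite by hypothesis.

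For quasi-compactness, suppose we are given a collection $\{ M_x \}_{x \in X}$ of objects of $\cat{D}(A)$, and set $M := \bigoplus_{x \in X} M_x$. The plan is to compute $\mrm{R} F(M)$ via a resolution which is itself a direct sum, so that quasi-compactness at the level of modules propagates to the derived functor. Concretely, for each $x$ choose a quasi-isomorphism $\phi_x : M_x \to I_x$, where $I_x$ is a K-injective complex consisting of injective $A$-modules, so that $\mrm{R} F(M_x) \cong F(I_x)$ in $\cat{D}(B)$ via the canonical morphism $\xi^{\mrm{R}}_{I_x}$. Put $I := \bigoplus_{x \in X} I_x$ and $\phi := \bigoplus_{x \in X} \phi_x : M \to I$. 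Since cohomology commutes with direct sums of complexes, $\phi$ is a quasi-isomorphism.

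The key step is to invoke Lemma \ref{lem:1058}: because $F$ has finite right cohomological dimension and every $\mrm{R}^q F$ is quasi-compact, and because each $I_x$ is K-injective (hence a right $F$-acyclic complex), the direct sum $I$ is again a right $F$-acyclic complex. Hence $\xi^{\mrm{R}}_I : F(I) \iso \mrm{R} F(I) \cong \mrm{R} F(M)$ is an isomorphism in $\cat{D}(B)$. On the other hand, $F = \mrm{R}^0 F$ is quasi-compact as a functor on modules, and in particular commutes with direct sums termwise, so the canonical map $\bigoplus_{x \in X} F(I_x) \to F(I)$ is an isomorphism of complexes in $\cat{C}(B)$. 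Putting these together yields isomorphisms
\[ \bigoplus\nolimits_{x \in X} \mrm{R} F(M_x) \iso \bigoplus\nolimits_{x \in X} F(I_x) \iso F(I) \iso \mrm{R} F(M) \]
in $\cat{D}(B)$.

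The only delicate point is to check that this composite isomorphism agrees with the canonical morphism $\bigoplus_{x} \mrm{R} F(M_x) \to \mrm{R} F(M)$ induced by the structural maps $M_x \to M$; this is routine and follows from the naturality of $\xi^{\mrm{R}}$ together with the universal property of the coproduct. This verification, rather than any genuine technical difficulty, is the main bookkeeping obstacle in writing out the proof.
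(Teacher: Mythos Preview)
Your proof is correct and follows essentially the same approach as the paper: finite dimensionality via Proposition \ref{prop:1160}, and quasi-compactness by choosing K-injective resolutions $M_x \to I_x$, applying Lemma \ref{lem:1058} to see that $I = \bigoplus_x I_x$ is right $F$-acyclic, and using the quasi-compactness of $F = \mrm{R}^0 F$ on complexes termwise. The paper organizes the final verification (that the resulting isomorphism is the canonical one) into a pair of explicit commutative diagrams rather than leaving it as a remark, but the content is identical.
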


\begin{proof}
By Proposition \ref{prop:1160} the functor $\mrm{R} F$ is finite 
dimensional. We need to  prove that $\mrm{R} F$ commutes with 
infinite direct sums. Namely, consider a collection \lb $\{ M_x \}_{x \in X}$ 
of complexes, and let $M := \bigoplus_{x \in X} M_x$. We have to prove that the 
canonical morphism 
\[ \bigoplus_{x \in X} \, \mrm{R} F(M_x) \to  \mrm{R} F(M) \]
in $\cat{D}(A)$ is an isomorphism. 

For every index $x$ we choose a quasi-isomorphism 
$\phi_x : M_x \to I_x$ in $\cat{C}(A)$,
where $I_x$ is a K-injective complex. 
Define $I := \bigoplus_{x \in X} I_x$, so there is a quasi-isomorphism 
$\phi : M \to I$ in $\cat{C}(A)$. We get a commutative diagram 
\[ \UseTips \xymatrix @C=14ex @R=6ex {
\bigoplus_{x \in X} \mrm{R} F(M_x)
\ar[r]^{ \bigoplus \mrm{R} F(\phi_x) } 
\ar[d]_{}
& 
\bigoplus_{x \in X} \mrm{R} F(I_x)
\ar[d]
\\
\mrm{R} F(M)
\ar[r]^{ \mrm{R} F(\phi) }
& 
\mrm{R} F(I)
} \]
in $\cat{D}(A)$, in which the horizontal arrows are isomorphisms. 
Therefore it suffices to prove that the canonical morphism 
\begin{equation} \label{eqn:1060}
\bigoplus_{x \in X} \, \mrm{R} F(I_x) \to \mrm{R} F(I) 
\end{equation}
in $\cat{D}(A)$ is an isomorphism. 

Consider this commutative diagram in $\cat{D}(A)$~:
\[ \UseTips \xymatrix @C=12ex @R=6ex {
\bigoplus_{x \in X} \, F(I_x)
\ar[r]^{ \bigoplus \xi^{\mrm{R}}_{I_x} }
\ar[d]_{}
& 
\bigoplus_{x \in X} \, \mrm{R} F(I_x)
\ar[d]
\\
F(I)
\ar[r]^{ \xi^{\mrm{R}}_I }
& 
\mrm{R} F(I)
} \]
For each $x$ the morphism $\xi^{\mrm{R}}_{I_x}$ is an isomorphism; and hence 
the 
top 
horizontal arrow is an isomorphism. 
Because $F = \mrm{R}^0 F$ is quasi-compact, the left vertical arrow is an 
isomorphism (in $\cat{C}(A)$, and so also in $\cat{D}(A)$). By Lemma 
\ref{lem:1058} the complex $I$ is right $F$-acyclic, and therefore 
$\xi^{\mrm{R}}_I$ is 
an isomorphism. Hence the remaining arrow is an isomorphism; but this is the 
morphism (\ref{eqn:1060}).
\end{proof}

\section{Weakly Stable and Idempotent Copointed Functors} 
\label{sec:idem-cop-func}

Again $A$ is a ring. The category of left $A$-modules is $\cat{M}(A)$. In this 
section we introduce a new property of additive functors from 
$\cat{M}(A)$ to itself. 

\begin{dfn} \label{dfn:1225}
Let $F : \cat{M}(A) \to \cat{M}(A)$ be a left exact additive functor.
\begin{enumerate}
\item The functor $F$ is called {\em weakly stable} if for every injective 
$A$-module $I$, the $A$-module $F(I)$ is right $F$-acyclic. 

\item The functor $F$ is called {\em stable} if for every injective $A$-module 
$I$, the $A$-module $F(I)$ is injective.  
\end{enumerate}
\end{dfn}

When we say that a functor $F$ is stable or weakly stable, it is always 
implied that $F$ is a left exact additive functor. 
Clearly stable implies weakly stable. The reason for these names will become 
apparent in the next section, when we talk about torsion functors. 

\begin{dfn} \label{dfn:1003} 
Let $\cat{N}$ be an additive category (e.g.\ $\cat{M}(A)$ or $\cat{D}(A)$), 
with identity functor $\opn{Id}_{\cat{N}}$.
\begin{enumerate}
\item A {\em copointed additive functor} on $\cat{N}$ is a pair 
$(F, \si)$, consisting of an additive functor 
$F : \cat{N} \to \cat{N}$, and morphism of functors 
$\si : F \to \opn{Id}_{\cat{N}}$.

\item The copointed additive functor $(F, \si)$ is called {\em idempotent} if 
the morphisms 
\[ \si_{F(N)}, \, F(\si_N) : F(F(N)) \to F(N) \]
are isomorphisms for all objects $N \in \cat{N}$.

\item If $\cat{N}$ is a triangulated category, $F$ is a triangulated functor, 
and $\si$ is a morphism of triangulated functors, then we call 
$(F, \si)$ a {\em copointed triangulated functor}. 
\end{enumerate}
\end{dfn}

See Remark \ref{rem:1415} regarding the name ``copointed functor''. 

Weak stability and idempotence together have the following effect. 

\begin{prop} \label{prop:1162}
Let $(F, \si)$ be an idempotent copointed additive functor on 
$\cat{M}(A)$, and assume that $F$ is weakly stable. If $I$ is a 
right $F$-acyclic module, then $F(I)$ is also a right $F$-acyclic module.
\end{prop}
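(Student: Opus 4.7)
The plan is to compute $\mrm{R} F(F(I))$ explicitly via an injective resolution of $I$, and show that its cohomology is concentrated in degree zero. The key idea is that weak stability lets us replace an injective resolution of $F(I)$ by $F(J)$, where $J$ is an injective resolution of $I$, and then idempotence of $\sigma$ applied termwise to $J$ identifies $F(F(J))$ with $F(J)$.

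First I would choose an injective resolution $I \to J$ in $\cat{C}(A)$, so $J$ is a bounded below complex of injective $A$-modules with $\opn{H}^0(J) \cong I$ and $\opn{H}^q(J) = 0$ for $q > 0$. The hypothesis that $I$ is right $F$-acyclic then says that $\opn{H}^q(F(J)) = \mrm{R}^q F(I) = 0$ for $q > 0$, while left exactness of $F$ gives $\opn{H}^0(F(J)) \cong F(I)$. Thus the induced map $F(I) \to F(J)$ (viewing $F(I)$ as a complex in degree zero) is a quasi-isomorphism in $\cat{C}(A)$.

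Next, since $F$ is weakly stable and each $J^q$ is injective, every component $F(J^q)$ is right $F$-acyclic. Because $F(J)$ is bounded below, Lemma \ref{lem:1057} tells us that $F(J)$ is a right $F$-acyclic complex. Applying $\mrm{R} F$ to the quasi-isomorphism $F(I) \to F(J)$ and using acyclicity yields an isomorphism
\[ \mrm{R} F(F(I)) \iso \mrm{R} F(F(J)) \iso F(F(J)) \]
in $\cat{D}(A)$.

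The last step invokes idempotence. For every $q$, the morphism $F(\sigma_{J^q}) : F(F(J^q)) \to F(J^q)$ is an isomorphism by assumption, and by naturality of $\sigma$ these assemble into a morphism of complexes $F(\sigma_J) : F(F(J)) \to F(J)$, which is therefore an isomorphism in $\cat{C}(A)$. Combining this with the previous isomorphism, $\mrm{R} F(F(I)) \cong F(J)$ in $\cat{D}(A)$, so $\mrm{R}^q F(F(I)) \cong \opn{H}^q(F(J)) = 0$ for all $q > 0$, which is exactly the statement that $F(I)$ is right $F$-acyclic. The only delicate point is ensuring that weak stability together with boundedness of $J$ legitimately lets us compute $\mrm{R} F(F(I))$ using $F(J)$; this is precisely the content of Lemma \ref{lem:1057}, so there is no real obstacle beyond careful bookkeeping.
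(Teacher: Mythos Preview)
Your proof is correct and follows essentially the same route as the paper's: both take an injective resolution $I \to J$, use right $F$-acyclicity of $I$ to get a quasi-isomorphism $F(I) \to F(J)$, invoke weak stability together with Lemma~\ref{lem:1057} to see that $F(J)$ is a right $F$-acyclic complex, and then use idempotence to finish. The only cosmetic differences are that the paper phrases the conclusion as ``$\xi^{\mrm{R}}_{F(I)}$ is an isomorphism'' via a commutative diagram using $\si_{F(I)}$ and $\si_{F(J)}$, whereas you use the other half of idempotence, $F(\si_{J})$, termwise and conclude by directly computing $\mrm{R}^q F(F(I)) = 0$; these are equivalent formulations.
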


\begin{proof}
Choose an injective resolution $\eta : I \to J$; i.e.\ $J$ is a complex of 
injectives concentrated in nonnegative degrees, and $\eta$ is a 
quasi-isomorphism. Since 
$\opn{H}^q(F(J)) \cong \mrm{R}^q F(I)$, and since $I$ is a right $F$-acyclic 
module, we see that the homomorphism of complexes
$F(\eta) : F(I) \to F(J)$ is a quasi-isomorphism. 
Therefore both $F(\eta)$ and $\mrm{R} F(F(\eta))$ are isomorphisms in 
$\cat{D}(A)$. 
The weak stability of $F$ implies that $F(J)$ is a bounded below complexes of 
right $F$-acyclic modules. According to Lemma \ref{lem:1057} the 
complex $F(J)$ is right $F$-acyclic. This means that the morphism 
$\xi^{\mrm{R}}_{F(J)}$ is an isomorphism. 
The idempotence of $F$ says that the morphisms 
$\si_{F(I)}$ and $\si_{F(J)}$ are isomorphisms (already in $\cat{C}(A)$). 
We get a commutative diagram in $\cat{D}(A)$: 
\[ \UseTips \xymatrix @C=8ex @R=8ex {
F(I)
\ar[d]_{F(\eta)}^{\cong}
&
F(F(I))
\ar[l]_{\si_{F(I)}}^{\cong}
\ar[r]^{\xi^{\mrm{R}}_{F(I)}}
\ar[d]_{F(F(\eta))}
&
\mrm{R} F(F(I))
\ar[d]_{\mrm{R} F(F(\eta))}^{\cong}
\\
F(J)
&
F(F(J))
\ar[l]_{\si_{F(J)}}^{\cong}
\ar[r]^{\xi^{\mrm{R}}_{F(J)}}_{\cong}
&
\mrm{R} F(F(J))
} \]
We conclude that $\xi^{\mrm{R}}_{F(I)}$ is an isomorphism; and this means that 
$F(I)$ is right $F$-acyclic. 
\end{proof}

The next lemma is a generalization of \cite[Proposition 3.10]{PSY1}.

\begin{lem} \label{lem:1063}
Suppose we are given a copointed additive functor $(F, \si)$ on $\cat{M}(A)$. 
Then there is a unique morphism 
\[ \si^{\mrm{R}} : \mrm{R} F \to \opn{Id}_{\cat{D}(A)} \]
of triangulated functors from $\cat{D}(A)$ to itself, satisfying this 
condition\tup{:} for every $M \in \cat{D}(A)$ there is equality 
$\si^{\mrm{R}}_M \circ \xi^{\mrm{R}}_M = \si_M$
of morphisms $F(M) \to M$ in $\cat{D}(A)$. 
\end{lem}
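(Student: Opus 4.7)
The plan is to invoke the universal property of the right derived functor, as developed in \cite[Section 8]{Ye6}. Applying $F$ termwise, the pair $(F, \si)$ extends uniquely to a copointed triangulated functor on $\cat{K}(A)$. Composing with the localization $\opn{Q} : \cat{K}(A) \to \cat{D}(A)$ produces a morphism $\opn{Q} \cdot \si : \opn{Q} \circ F \to \opn{Id}_{\cat{D}(A)} \circ \opn{Q}$ of triangulated functors $\cat{K}(A) \to \cat{D}(A)$. The universal property of $(\mrm{R} F, \xi^{\mrm{R}})$ then yields a unique morphism of triangulated functors $\si^{\mrm{R}} : \mrm{R} F \to \opn{Id}_{\cat{D}(A)}$ satisfying $(\si^{\mrm{R}} \cdot \opn{Q}) \circ \xi^{\mrm{R}} = \opn{Q} \cdot \si$, and evaluated at any $M \in \cat{K}(A)$ this is exactly the required identity $\si^{\mrm{R}}_M \circ \xi^{\mrm{R}}_M = \si_M$.

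For a more explicit description I would use the K-injective model of $\mrm{R} F$: for every $M \in \cat{K}(A)$ fix a K-injective resolution $\ze_M : M \to I_M$, so that $\mrm{R} F(M) = F(I_M)$ and $\xi^{\mrm{R}}_M = F(\ze_M)$. Then set
\[ \si^{\mrm{R}}_M \, := \, \ze_M^{-1} \circ \si_{I_M} : F(I_M) \to M \]
as a morphism in $\cat{D}(A)$. Naturality of $\si$ applied to the chain map $\ze_M$ gives $\si_{I_M} \circ F(\ze_M) = \ze_M \circ \si_M$ in $\cat{C}(A)$, and hence
\[ \si^{\mrm{R}}_M \circ \xi^{\mrm{R}}_M = \ze_M^{-1} \circ \si_{I_M} \circ F(\ze_M) = \ze_M^{-1} \circ \ze_M \circ \si_M = \si_M \]
in $\cat{D}(A)$, as required.

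For uniqueness, observe that on any K-injective complex $I$ one may take $\ze_I = \opn{Id}_I$ as the resolution, making $\xi^{\mrm{R}}_I$ the identity morphism. The defining relation then forces $\tilde{\si}^{\mrm{R}}_I = \si_I$ for any second candidate $\tilde{\si}^{\mrm{R}}$. Since every object of $\cat{D}(A)$ is isomorphic to such an $I$, and $\tilde{\si}^{\mrm{R}}$ is natural, the equality propagates to $\tilde{\si}^{\mrm{R}} = \si^{\mrm{R}}$ on all of $\cat{D}(A)$. The main (and only mildly subtle) point to check is that the explicit recipe for $\si^{\mrm{R}}_M$ is independent of the chosen resolution $\ze_M$, natural in $M$, and compatible with the triangulated structure (translation and distinguished triangles); each of these is a standard consequence of the uniqueness-up-to-homotopy of morphisms between K-injective complexes, and is no more involved than the analogous checks needed to construct $\mrm{R} F$ itself.
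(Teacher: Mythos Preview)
Your proposal is correct and follows essentially the same approach as the paper: both invoke the universal property of the right derived functor for existence and uniqueness, and both give the explicit construction via K-injective resolutions $\ze_M : M \to I_M$, setting $\si^{\mrm{R}}_M = \ze_M^{-1} \circ \si_{I_M}$ (the paper phrases this as $\si^{\mrm{R}}_I := \si_I \circ (\xi^{\mrm{R}}_I)^{-1}$ for K-injective $I$, then transports along $\eta : M \to I$, which amounts to the same formula). Your version spells out the naturality and uniqueness arguments in slightly more detail than the paper does.
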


In a commutative diagram:
\begin{equation} \label{eqn:1455}
\UseTips \xymatrix @C=6ex @R=6ex {
F(M)
\ar[r]^{ \xi^{\mrm{R}}_M }
\ar[dr]_{ \si_M }
&
\mrm{R} F(M)
\ar[d]^{ \si^{\mrm{R}}_M }
\\
&
M
} 
\end{equation}

\begin{proof}
The existence of the morphism $\si^{\mrm{R}}$ comes for free from the universal 
property of the right derived functor. Still, for later reference, we give the 
construction. 

For a K-injective complex $I$ the morphism 
$\xi^{\mrm{R}}_I : F(I) \to \mrm{R} F(I)$ in $\cat{D}(A)$ is an isomorphism, 
and we define 
$\si^{\mrm{R}}_I : \mrm{R} F(I) \to I$
to be 
$\si^{\mrm{R}}_I := \si_I \circ (\xi^{\mrm{R}}_I)^{-1}$.
For an arbitrary complex $M$ we choose a quasi-isomorphism 
$\eta : M \to I$ into a K-injective complex, and then we let 
\[ \si^{\mrm{R}}_M := \eta^{-1} \circ \si^{\mrm{R}}_I \circ \mrm{R} F(\eta)  \]
in $\cat{D}(A)$. The corresponding commutative diagram in 
$\cat{D}(A)$ is this: 
\[ \UseTips \xymatrix @C=8ex @R=8ex {
F(M)
\ar[r]^{\xi^{\mrm{R}}_M}
\ar[d]_{F(\eta)}
\ar@(u,u)[rr]^{\si_M}
&
\mrm{R} F(M)
\ar[d]_{\mrm{R} F(\eta)}^{\cong}
\ar[r]^{\si^{\mrm{R}}_M}
&
M
\ar[d]_{\eta}^{\cong}
\\
F(I)
\ar[r]^{\xi^{\mrm{R}}_I}_{\cong}
\ar@(d,d)[rr]_{\si_I}
&
\mrm{R} F(I)
\ar[r]^{\si^{\mrm{R}}_I}
&
I
} \]

It is easy to see that the collection of morphisms 
$\{ \si^{\mrm{R}}_M \}_{M \in \cat{D}(A)}$ 
has the desired properties. 
\end{proof}

In this way we obtain a copointed triangulated functor 
$(\mrm{R} F, \si^{\mrm{R}})$ on $\cat{D}(A)$.

\begin{thm} \label{thm:1004}
Let $(F, \si)$ be an  idempotent copointed additive functor on $\cat{M}(A)$.
Assume that $F$ is weakly stable and has finite right cohomological dimension.  
Then the copointed triangulated 
functor $(\mrm{R} F, \si^{\mrm{R}})$ on $\cat{D}(A)$
is idempotent. 
\end{thm}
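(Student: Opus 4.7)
The plan is to show that for every $M \in \cat{D}(A)$ the morphisms $\si^{\mrm{R}}_{\mrm{R} F(M)}$ and $\mrm{R} F(\si^{\mrm{R}}_M)$ are isomorphisms in $\cat{D}(A)$. Since both sides are invariant under quasi-isomorphism, I would replace $M$ by a K-injective resolution $I$ whose components $I^q$ are all injective $A$-modules (such resolutions exist by the fact recalled in Section \ref{sec:cpct-funcs}). For such an $I$ the morphism $\xi^{\mrm{R}}_I$ is an isomorphism and, via the defining relation $\si^{\mrm{R}}_I \circ \xi^{\mrm{R}}_I = \si_I$ from Lemma \ref{lem:1063}, $\si^{\mrm{R}}_I$ is transported to the honest complex morphism $\si_I : F(I) \to I$. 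So everything reduces to showing that, on the complex $F(I)$, the morphisms $\si^{\mrm{R}}_{F(I)}$ and $\mrm{R} F(\si_I)$ are isomorphisms in $\cat{D}(A)$.

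The one nontrivial step — and, I expect, the main obstacle — is proving that $F(I)$ is a right $F$-acyclic \emph{complex}. Each $I^q$ is injective, hence right $F$-acyclic as a module. Because $F$ is weakly stable and $(F,\si)$ is idempotent, Proposition \ref{prop:1162} applies, yielding that each $F(I^q)$ is also a right $F$-acyclic module. Thus $F(I)$ is a (possibly unbounded) complex of right $F$-acyclic modules. This is precisely where the finite right cohomological dimension hypothesis on $F$ is essential: it lets me invoke Lemma \ref{lem:1057} to conclude that $F(I)$ is a right $F$-acyclic complex, i.e.\ $\xi^{\mrm{R}}_{F(I)} : F(F(I)) \to \mrm{R} F(F(I))$ is an isomorphism in $\cat{D}(A)$.

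With this acyclicity in hand the two remaining verifications are formal. Module-level idempotence of $(F,\si)$ extends component-wise to the complex $I$, so $\si_{F(I)} : F(F(I)) \to F(I)$ and $F(\si_I) : F(F(I)) \to F(I)$ are isomorphisms already in $\cat{C}(A)$. From Lemma \ref{lem:1063} one has $\si^{\mrm{R}}_{F(I)} \circ \xi^{\mrm{R}}_{F(I)} = \si_{F(I)}$; the right-hand side and the middle factor are both isomorphisms, so $\si^{\mrm{R}}_{F(I)}$ is an isomorphism. For the second assertion, I would write down the naturality square of $\xi^{\mrm{R}}$ along $\si_I : F(I) \to I$: its vertical arrows $\xi^{\mrm{R}}_{F(I)}$ and $\xi^{\mrm{R}}_I$ are both isomorphisms (the first by the previous paragraph, the second because $I$ is K-injective), and the top horizontal arrow $F(\si_I)$ is an isomorphism by idempotence. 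The bottom horizontal arrow $\mrm{R} F(\si_I)$ — equivalently, $\mrm{R} F(\si^{\mrm{R}}_I)$ under the identification $\mrm{R} F(I) \cong F(I)$ — is therefore an isomorphism, completing the proof. The only real bookkeeping is to keep the identification $\mrm{R} F(I) \cong F(I)$ consistent between the two checks, but this is harmless if $\xi^{\mrm{R}}_I$ is systematically used as the identification map.
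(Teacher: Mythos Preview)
Your proof is correct and follows essentially the same approach as the paper: replace $M$ by a K-injective resolution $I$ with injective components, use weak stability to see that each $F(I^q)$ is right $F$-acyclic, invoke Lemma~\ref{lem:1057} (via finite right cohomological dimension) to conclude that $F(I)$ is a right $F$-acyclic complex, and then reduce both idempotence checks to the component-wise idempotence of $(F,\si)$ via the relation $\si^{\mrm{R}} \circ \xi^{\mrm{R}} = \si$ and the naturality square of $\xi^{\mrm{R}}$.

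Two small remarks. First, your appeal to Proposition~\ref{prop:1162} is a slight detour: since each $I^q$ is actually injective, the definition of weak stability gives $F(I^q)$ right $F$-acyclic directly, without needing the idempotence hypothesis baked into that proposition. Second, your treatment of $\si^{\mrm{R}}_{\mrm{R} F(I)}$ is in fact a bit more economical than the paper's: you use the right $F$-acyclicity of $F(I)$ to identify $\mrm{R} F(F(I)) \cong F(F(I))$ directly, whereas the paper passes through an auxiliary K-injective resolution $J$ of $\mrm{R} F(I)$ and checks that $F(\psi)$ is a quasi-isomorphism for the comparison map $\psi : F(I) \to J$. Both routes work; yours is the shorter one.
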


\begin{proof}
Let $M$ be a complex of $A$-modules. Choose an isomorphism 
$M \iso I$ in $\cat{D}(A)$, where $I$ is a complex consisting of injective 
$A$-modules (e.g.\ take a K-injective resolution $M \to I$, such that $I$ 
consists of injective modules). It suffices to prove that 
\[ \si^{\mrm{R}}_{\mrm{R} F(I)}, \, \mrm{R} F(\si^{\mrm{R}}_I) : 
\mrm{R} F(\mrm{R} F(I)) \to \mrm{R} F(I) \]
are isomorphisms in $\cat{D}(A)$.

Since each $I^q$ is an injective module, it is  right $F$-acyclic. 
Since $F$ is a weakly stable functor, each of the modules $F(I^q)$ 
is right $F$-acyclic too. The functor $F$ has finite right cohomological 
dimension, so according to Lemma \ref{lem:1057}, the complexes $I$ 
and $F(I)$ are both right $F$-acyclic complexes.  

Consider the diagram 
\begin{equation} \label{eqn:1163}
\UseTips \xymatrix @C=10ex @R=6ex {
F(F(I))
\ar[d]^{ F(\si_I) }
\ar[r]^{ \xi^{\mrm{R}}_{F(I)} }
&
\mrm{R}F(F(I))
\ar[d]^(0.55){ \mrm{R}F(\si_I) }
\ar[r]^{ \mrm{R}F(\xi^{\mrm{R}}_I) }
&
\mrm{R}F(\mrm{R}F(I))
\ar[d]^(0.55){ \mrm{R}F(\si^{\mrm{R}}_I) }
\\
F(I)
\ar[r]^{ \xi^{\mrm{R}}_{I} }
& 
\mrm{R}F(I)
\ar[r]^{ \opn{id} }
&
\mrm{R}F(I)
}
\end{equation}
in $\cat{D}(A)$. 
The left square is commutative: it is gotten from the vertical
morphism $\si_I : F(I) \to I$, to which we apply in the horizontal direction 
the morphism of functors $\xi^{\mrm{R}} : F \to \mrm{R} F$. The right square is 
also commutative: it comes from applying the functor $\mrm{R}F$ to the 
commutative diagram 
\[ \UseTips \xymatrix @C=8ex @R=6ex {
F(I)
\ar[r]^{\xi^{\mrm{R}}_I}
\ar[d]_{\si_I}
&
\mrm{R}F(I)
\ar[d]^{\si^{\mrm{R}}_I}
\\
I
\ar[r]^{ \opn{id} }
&
I
} \]
that characterizes $\si^{\mrm{R}}_I$. 
Because $I$ and $F(I)$ are right $F$-acyclic complexes, the morphisms 
$\xi^{\mrm{R}}_I$ and $\xi^{\mrm{R}}_{F(I)}$ are isomorphisms. Hence 
$\mrm{R}F(\xi^{\mrm{R}}_I)$ is an 
isomorphism. So the horizontal morphisms in the diagram \ref{eqn:1163} are all 
isomorphisms. We are given that $F$ is idempotent, and thus $F(\si_{I})$ is an
isomorphism.  The conclusion of this discussion is that 
$\mrm{R}F(\si^{\mrm{R}}_I)$ is an isomorphism.

Next, let $\phi : \mrm{R}F(I) \to J$ be an isomorphism in $\cat{D}(A)$ to a 
K-injective complex $J$. We know that 
$\xi^{\mrm{R}}_I : F(I) \to \mrm{R}F(I)$ is an isomorphism, and by composing 
them we get the isomorphism 
$\phi \circ \xi^{\mrm{R}}_I : F(I) \to J$ in $\cat{D}(A)$. 
Let $\psi : F(I) \to J$ be a 
quasi-isomorphism in $\cat{C}(A)$ representing $\phi \circ \xi^{\mrm{R}}_I$. 
Since these 
are both right $F$-acyclic complexes, it follows that 
$F(\psi) : F(F(I)) \to F(J)$ is a  quasi-isomorphism.
Consider the following commutative diagram 
\[ \UseTips \xymatrix @C=8ex @R=6ex {
F(F(I))
\ar[r]^{F(\psi)}
\ar[d]^{ \si_{F(I)} } 
&
F(J)
\ar[r]^{\xi^{\mrm{R}}_{J}}
\ar[d]^{\si_{J}}
& 
\mrm{R}F(J)
\ar[d]^(0.55){ \si^{\mrm{R}}_J }
&
\mrm{R}F(\mrm{R}F(I))
\ar[d]^(0.55){ \si^{\mrm{R}}_{\mrm{R}F(I)} }
\ar[l]_{ \mrm{R}F(\phi) }
\\
F(I)
\ar[r]^{\psi}
& 
J
\ar[r]^{\opn{id}}
& 
J
& 
\mrm{R}F(I)
\ar[l]_{\phi} 
} \]
in $\cat{D}(A)$. All 
horizontal arrows here are isomorphisms. 
We are given that $F$ is idempotent, and thus $\si_{F(I)}$ is an
isomorphism. The conclusion is that 
$\si^{\mrm{R}}_{\mrm{R}F(I)}$ is an isomorphism.
\end{proof}

Later on we shall need a notion dual to ``copointed functor''. 

\begin{dfn} \label{dfn:1210} 
Let $\cat{N}$ be an additive category (e.g.\ $\cat{M}(A)$ or $\cat{D}(A)$), 
with identity functor $\opn{Id}_{\cat{N}}$.
\begin{enumerate}
\item A {\em pointed additive functor} on $\cat{N}$ is a pair 
$(G, \tau)$, consisting of an additive functor 
$G : \cat{N} \to \cat{N}$, and morphism of functors 
$\tau : \opn{Id}_{\cat{N}} \to G$.

\item The pointed additive functor $(G, \tau)$ is called {\em idempotent} if 
the morphisms 
\[ \tau_{G(N)}, \, G(\tau_N) : G(N) \to G(G(N)) \]
are isomorphisms for all objects $N \in \cat{N}$. 

\item If $\cat{N}$ is a triangulated category, $G$ is a triangulated functor, 
and $\tau$ is a morphism of triangulated functors, then we call 
$(G, \tau)$ a {\em pointed triangulated functor}. 
\end{enumerate}
\end{dfn}

\begin{rem} \label{rem:1415}
Idempotent copointed functors already appeared in the literature under another 
name: idempotent comonads. Another name for (nearly) the same notion is 
a (Bousfield) colocalization functor, see  e.g.\ \cite{Kr}. 
Dually, idempotent pointed functors are the same 
thing as idempotent monads. See \cite{nLab} for a discussion of these concepts. 

In \cite[Section 4.1]{KS2}, what we call an idempotent pointed functor is 
called a projector. It is proved there that for an idempotent 
pointed functor $(G, \tau)$ and an object $N$,  
the morphisms $\tau_{G(N)}$ and $G(\tau_N)$ are equal. 
The same proof (with arrows reversed) shows that for an idempotent 
copointed functor $(F, \si)$ and an object $N$,  
the morphisms $\si_{F(N)}$ and $F(\si_N)$ are equal. We shall not require these 
facts. 
\end{rem}

\section{Torsion Classes} \label{sec:tors-cls}

Again $A$ is a ring. The category of left $A$-modules is $\cat{M}(A)$. 

\begin{dfn} \label{dfn:1415}
A  {\em torsion class} in $\cat{M}(A)$ is a class of objects 
$\catt{T} \subseteq \cat{M}(A)$ 
that is closed under taking submodules, quotients, extensions and 
infinite direct sums. 
\end{dfn}

\begin{rem} \label{rem:1465}
Many texts (including \cite[Chapter VI]{St}) use the name ``hereditary torsion 
class'', and the extra adjective ``hereditary'' indicates that $\catt{T}$ is 
closed under taking submodules. Since this distinction never shows up is our 
work (all our torsion classes are hereditary), and since we have 
plenty of other attributes to attach to a torsion class, we decided to 
allow ourselves to simplify the naming. 
\end{rem}

Note that the full subcategory on a torsion class $\catt{T}$ is called a 
{\em localizing subcategory} of $\cat{M}(A)$. 

A  torsion class $\catt{T} \subseteq \cat{M}(A)$ induces a left exact 
additive functor 
$\Ga_{\catt{T}}$ from $\cat{M}(A)$ to itself, called the {\em torsion functor}. 
The formula is this: 
for a module $M$, $\Ga_{\catt{T}}(M)$ is the largest submodule of $M$ that 
belongs to $\catt{T}$. For this reason, a module $M$ that belongs to $\catt{T}$ 
is called a {\em $\catt{T}$-torsion module}. 
As $M$ varies, the inclusions  
$\si_M : \Ga_{\catt{T}}(M) \to M$ become a morphism of functors 
$\si : \Ga_{\catt{T}} \to \opn{Id}_{\cat{M}(A)}$,
and thus the pair $(\Ga_{\catt{T}}, \si)$ is a copointed additive functor. 
This is in fact an idempotent copointed functor, since 
$\Ga_{\catt{T}}(\Ga_{\catt{T}}(M)) =  \Ga_{\catt{T}}(M)$
as submodules of $M$. The class $\catt{T}$ can be recovered from the 
copointed additive functor $(\Ga_{\catt{T}}, \si)$, as follows: 
$M \in \catt{T}$ if and only if $\si_M : \Ga_{\catt{T}}(M) \to M$
is an isomorphism.

A  torsion class $\catt{T}$ determines a  set of left 
ideals $\opn{Filt}(\catt{T})$, called the {\em  Gabriel filter} of $\catt{T}$.
By definition, a left ideal $\a \subseteq A$ belongs to 
$\opn{Filt}(\catt{T})$ if the left module $A / \a$ belongs to $\catt{T}$.
The functor $\Ga_{\catt{T}}$ can be recovered from the Gabriel filter, as 
follows. We view $\opn{Filt}(\catt{T})$ as a partially ordered set by 
inclusion. Then for every module $M$ there is equality 
\[ \Ga_{\catt{T}}(M) = 
\lim_{\substack{ \xar{ } \\ \a \in \opn{Filt}(\catt{T}) }} \, 
\opn{Hom}_A(A / \a, M) \]
of submodules of $M$. In other words, an element $m \in M$ lies inside 
$\Ga_{\catt{T}}(M)$ if and only if $m$ is annihilated by some left ideal 
$\a \in \opn{Filt}(\catt{T})$. 

Here is a source of torsion classes. 

\begin{dfn} \label{dfn:1065}
Let $A$ be a ring, and let $\a$ be a two-sided ideal in $A$ that is finitely 
generated as a left ideal. For an $A$-module 
$M$, we define its {\em $\a$-torsion submodule} to be 
\[ \Ga_{\a}(M) := \lim_{i \to } \, \opn{Hom}_A(A / \a^i, M) \subseteq M . \]
The class of objects 
$\catt{T}_{\a} \subseteq \cat{M}(A)$
is defined to be 
\[ \catt{T}_{\a} := \bigl\{ M \mid \Ga_{\a}(M) = M \bigr\} . \]
\end{dfn}

Note that an element $m \in M$ belongs to $\Ga_{\a}(M)$ if and only if 
$\a^i \cd m = 0$ for $i \gg 0$. 
We require $\a$ to be finitely generated as a left ideal to ensure that 
$\catt{T}_{\a}$ is closed under extensions. 

\begin{dfn} \label{dfn:1037}
Let $\catt{T}$ be a  torsion class in $\cat{M}(A)$.
\begin{enumerate}
\item We call $\catt{T}$ a {\em weakly stable} torsion class if the functor 
$\Ga_{\catt{T}}$ is weakly stable, as in Definition \ref{dfn:1225}(1). 

\item  We call $\catt{T}$ a {\em stable} torsion class if the functor 
$\Ga_{\catt{T}}$ is stable, as in Definition \ref{dfn:1225}(2).

\item We call $\catt{T}$ a {\em quasi-compact} torsion class if for every 
$q \geq 0$ the functor $\mrm{R}^q \Ga_{\catt{T}}$ is quasi-compact,
as in Definition \ref{dfn:1300}.

\item The {\em dimension} of $\catt{T}$ is defined to be the right 
cohomological 
dimension of the functor $\Ga_{\catt{T}}$. 
\end{enumerate}
\end{dfn}

Item (2) in the definition above, i.e.\ the notion of stable torsion class, is 
standard -- see \cite[Section VI.7]{St}. The rest of the definition is new. 

Example \ref{exa:1650} presents a torsion class that is weakly stable but not 
stable. 

\begin{dfn} \label{dfn:1038} 
Let $\catt{T}$ be a torsion class in $\cat{M}(A)$.
\begin{enumerate}
\item A module $I \in \cat{M}(A)$ is called a {\em $\catt{T}$-flasque module} 
if it is right $\Ga_{\catt{T}}$-acyclic, as in Definition \ref{dfn:1055}. 

\item A complex $I \in \cat{C}(A)$ is called a {\em $\catt{T}$-flasque 
complex} if it is right $\Ga_{\catt{T}}$-acyclic, as in Definition 
\ref{dfn:1301}.
\end{enumerate}
\end{dfn}

Item (1) in the definition above is copied from \cite{YZ}. 
Note that for a module $I$, considered also as a complex, the definition is 
consistent. 

\begin{prop} \label{prop:1165}
It $\catt{T}$ is a weakly stable torsion class in $\cat{M}(A)$, then every 
module $M \in \catt{T}$ is $\catt{T}$-flasque. 
\end{prop}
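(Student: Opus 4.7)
The plan is to reduce the vanishing $\mrm{R}^q \Ga_{\catt{T}}(M) = 0$ for $M \in \catt{T}$ and $q \geq 1$ to the weak stability hypothesis via a dimension-shifting argument, by induction on $q$.

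First I would embed $M$ into an injective $A$-module $I$. Since $M \in \catt{T}$, the image is contained in $\Ga_{\catt{T}}(I)$, and since $\catt{T}$ is closed under quotients, the module $M' := \Ga_{\catt{T}}(I) / M$ again lies in $\catt{T}$. Thus we obtain a short exact sequence
\[ 0 \to M \to \Ga_{\catt{T}}(I) \to M' \to 0 \]
with all three terms in $\catt{T}$. By the weak stability hypothesis, $\Ga_{\catt{T}}(I)$ is $\catt{T}$-flasque, so $\mrm{R}^q \Ga_{\catt{T}}(\Ga_{\catt{T}}(I)) = 0$ for every $q \geq 1$.

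Now I would apply the long exact sequence of the derived functors $\mrm{R}^q \Ga_{\catt{T}}$. For the base case $q = 1$, the relevant piece is
\[ \Ga_{\catt{T}}(\Ga_{\catt{T}}(I)) \to \Ga_{\catt{T}}(M') \to \mrm{R}^1 \Ga_{\catt{T}}(M) \to \mrm{R}^1 \Ga_{\catt{T}}(\Ga_{\catt{T}}(I)) = 0 . \]
Since $\Ga_{\catt{T}}(I) \in \catt{T}$ and $M' \in \catt{T}$, the first two terms identify with $\Ga_{\catt{T}}(I)$ and $M'$, and the first map is the surjection from the original sequence. Hence $\mrm{R}^1 \Ga_{\catt{T}}(M) = 0$. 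For the inductive step $q \geq 2$, the long exact sequence yields
\[ 0 = \mrm{R}^{q-1} \Ga_{\catt{T}}(\Ga_{\catt{T}}(I)) \to \mrm{R}^{q-1} \Ga_{\catt{T}}(M') \to \mrm{R}^q \Ga_{\catt{T}}(M) \to \mrm{R}^q \Ga_{\catt{T}}(\Ga_{\catt{T}}(I)) = 0 , \]
so $\mrm{R}^q \Ga_{\catt{T}}(M) \cong \mrm{R}^{q-1} \Ga_{\catt{T}}(M')$. Since $M'$ is again an arbitrary module in $\catt{T}$ (every $M \in \catt{T}$ produces such an $M'$), the inductive hypothesis that $\mrm{R}^{q-1} \Ga_{\catt{T}}$ vanishes on all of $\catt{T}$ gives the vanishing of $\mrm{R}^q \Ga_{\catt{T}}(M)$.

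No serious obstacle is anticipated: the argument is a standard d\'evissage, and the whole content of weak stability is precisely what is needed to kill the middle terms in the long exact sequence so that the inductive step closes. The only thing to keep track of is that $M'$ lies in $\catt{T}$, which is where the closure of $\catt{T}$ under quotients is used.
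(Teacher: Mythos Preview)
Your proof is correct and uses essentially the same idea as the paper: embed $M$ into $\Ga_{\catt{T}}(I)$ for an injective $I$, observe that the cokernel lies again in $\catt{T}$, and exploit that $\Ga_{\catt{T}}(I)$ is $\catt{T}$-flasque by weak stability. The only cosmetic difference is that the paper iterates this step to build an entire $\catt{T}$-flasque resolution $M \to J$ with $\Ga_{\catt{T}}(J) = J$ and reads off the vanishing all at once, whereas you package the same iteration as an induction on $q$ via dimension shifting; the content is identical.
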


\begin{proof}
Take any $M \in \catt{T}$. We can embed $M$ in an injective module $I^0$. But 
then $M \subseteq \Ga_{\catt{T}}(I^0)$, so we have an exact sequence 
$0 \to M \xar{\eta} \Ga_{\catt{T}}(I^0)$. The cokernel of $\eta$ is in 
$\catt{T}$, so the process can be continued, to give an exact sequence
\[ 0 \to M \xar{\eta} \Ga_{\catt{T}}(I^0) \to \Ga_{\catt{T}}(I^1) \to \cdots , 
\]
where the modules $I^q$ are injective. Writing $J^q := \Ga_{\catt{T}}(I^q)$, we 
get a quasi-iso\-morphism of complexes $\eta : M \to J$.

Because $\catt{T}$ is weakly stable, each $J^q = \Ga_{\catt{T}}(I^q)$ is a right
$\Ga_{\catt{T}}$-acyclic module (i.e.\ a $\catt{T}$-flasque module). 
According to Lemma \ref{lem:1057}, $J$ is a right $\Ga_{\catt{T}}$-acyclic 
complex (i.e.\ a $\catt{T}$-flasque complex). This accounts for the second 
isomorphism in:
\[ \opn{H}^q(\mrm{R}\Ga_{\catt{T}}(M)) \cong 
\opn{H}^q(\mrm{R}\Ga_{\catt{T}}(J)) \cong 
\opn{H}^q(\Ga_{\catt{T}}(J)) \cong 
\opn{H}^q(J) \cong \opn{H}^q(M) . \]
The first and fourth isomorphisms come from the 
quasi-isomorphism $\eta : M \to J$. And 
$\Ga_{\catt{T}}(J) = J$ because $\Ga_{\catt{T}}$ is idempotent.

The conclusion is that 
$\opn{H}^q(\mrm{R}\Ga_{\catt{T}}(M)) = 0$
for $q > 0$. 
\end{proof}

As in Section \ref{sec:idem-cop-func}, the copointed additive functor 
$(\Ga_{\catt{T}}, \si)$ on $\cat{M}(A)$ gives rise to a  copointed triangulated 
functor $(\mrm{R} \Ga_{\catt{T}}, \si^{\mrm{R}})$ on $\cat{D}(A)$.

\begin{dfn} \label{dfn:2025}
Let $\catt{T}$ be a torsion class in $\cat{M}(A)$. 
\begin{enumerate}
\item The category of {\em cohomologically $\catt{T}$-torsion complexes},
denoted by $\cat{D}_{\catt{T}}(A)$, is the full subcategory  of $\cat{D}(A)$
on the complexes $M$ such that $\opn{H}^i(M) \in \catt{T}$ for all $i$.

\item The category of {\em derived $\catt{T}$-torsion complexes},
denoted by $\cat{D}(A)_{\catt{T} \tup{-tor}}$, is the full subcategory  of 
$\cat{D}(A)$ on the complexes $M$ such that
\[ \si^{\mrm{R}}_{M} : \mrm{R} \Ga_{\catt{T}}(M) \to M \]
is an isomorphism.
\end{enumerate}
\end{dfn}

Clearly there is an inclusion 
$\cat{D}(A)_{\catt{T} \tup{-tor}} \sub \cat{D}_{\catt{T}}(A)$,
and these are triangulated subcategories of $\cat{D}(A)$. 

\begin{thm} \label{thm:2031}   
Let $\catt{T}$ be a weakly stable finite 
dimensional torsion class in $\cat{M}(A)$. 
The following conditions are equivalent for a complex 
$M \in \cat{D}(A)$. 
\begin{enumerate}
\rmitem{i} $M$ belongs to $\cat{D}(A)_{\catt{T} \tup{-tor}}$.

\rmitem{ii}  $M$ is in the essential image of the functor 
$\mrm{R} \Ga_{\catt{T}}$. 

\rmitem{iii}  $M$ belongs to $\cat{D}_{\catt{T}}(A)$.
\end{enumerate}
\end{thm}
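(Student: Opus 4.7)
The plan is to handle the three implications separately, reserving the bulk of the work for (iii)$\Rightarrow$(i). The implication (i)$\Rightarrow$(ii) is a tautology: if $\si^{\mrm{R}}_M$ is an isomorphism, then $M \cong \mrm{R}\Ga_{\catt{T}}(M)$, exhibiting $M$ in the essential image. For (ii)$\Rightarrow$(iii), starting from $M \cong \mrm{R}\Ga_{\catt{T}}(N)$, I would take a K-injective resolution $N \to I$ and use the identification $\mrm{R}\Ga_{\catt{T}}(N) \cong \Ga_{\catt{T}}(I)$ in $\cat{D}(A)$; since each term $\Ga_{\catt{T}}(I^q)$ lies in $\catt{T}$ by construction of the torsion functor, and $\catt{T}$ is closed under subquotients, every cohomology $\opn{H}^q(M)$ lies in $\catt{T}$.

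For (iii)$\Rightarrow$(i), let $\cat{E} \sub \cat{D}(A)$ be the full subcategory on complexes $M$ for which $\si^{\mrm{R}}_M$ is an isomorphism. The two-out-of-three property applied to morphisms of distinguished triangles, together with stability of isomorphisms under translation, makes $\cat{E}$ a triangulated subcategory of $\cat{D}(A)$. Weak stability of $\catt{T}$ enters through Proposition \ref{prop:1165}: every module $N \in \catt{T}$ is $\catt{T}$-flasque, so $\mrm{R}\Ga_{\catt{T}}(N) \cong N$ concentrated in degree zero and $\si^{\mrm{R}}_N$ is an isomorphism. Hence every shifted module $N[-k]$ with $N \in \catt{T}$ lies in $\cat{E}$. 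An induction on amplitude using the truncation triangles $\tau^{\leq k} M \to M \to \tau^{\geq k+1} M \to (\tau^{\leq k} M)[1]$ then yields the inclusion $\cat{D}^{\mrm{b}}_{\catt{T}}(A) \sub \cat{E}$.

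The main obstacle is passing from the bounded to the unbounded case, and this is precisely where the finite dimension hypothesis becomes essential. Let $d$ denote the right cohomological dimension of $\Ga_{\catt{T}}$; by Proposition \ref{prop:1160}, the cohomological displacement of $\mrm{R}\Ga_{\catt{T}}$ is contained in $[0, d]$. Fix $M \in \cat{D}_{\catt{T}}(A)$ and $n \in \Z$, and form the bounded double truncation
\[ M'' := \tau^{\geq n-d} \, \tau^{\leq n+d+1} \, M \in \cat{D}^{\mrm{b}}_{\catt{T}}(A) . \]
The two truncation triangles connecting $M$ to $M''$ have third terms with cohomology concentrated in $[n+d+2, \infty]$ and $[-\infty, n-d-1]$ respectively. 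The displacement bound $[0, d]$ forces that, after applying $\mrm{R}\Ga_{\catt{T}}$, these third terms still have cohomology concentrated in $[n+d+2, \infty]$ and $[-\infty, n-1]$ respectively; in particular all cohomologies vanish in degrees $n$ and $n+1$. The naturality squares for $\si^{\mrm{R}}$, combined with the resulting long exact sequences, identify $\opn{H}^n(\si^{\mrm{R}}_M)$ with $\opn{H}^n(\si^{\mrm{R}}_{M''})$, and the latter is an isomorphism by the preceding paragraph. Since $n$ is arbitrary, $\si^{\mrm{R}}_M$ is a quasi-isomorphism, and therefore an isomorphism in $\cat{D}(A)$.
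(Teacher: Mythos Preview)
Your proof is correct and follows essentially the same approach as the paper: both arguments first establish $\si^{\mrm{R}}_N$ is an isomorphism for modules $N \in \catt{T}$ via Proposition~\ref{prop:1165} and the relation $\si^{\mrm{R}}_N \circ \xi^{\mrm{R}}_N = \si_N$, then extend to all of $\cat{D}_{\catt{T}}(A)$ using the finite cohomological dimension of $\mrm{R}\Ga_{\catt{T}}$. The only difference is that the paper invokes the way-out lemma \cite[Proposition I.7.1]{RD} as a black box, whereas you have written out that truncation argument explicitly; your version is more self-contained but mathematically identical.
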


\begin{proof}
The implications (i) $\Rightarrow$ (ii) and (ii) $\Rightarrow$ (iii)
are obvious. 

Let us prove that (iii) $\Rightarrow$ (i).  Recall that 
$\si^{\mrm{R}}_M \circ \xi^{\mrm{R}}_M = \si_M$.
First suppose $M \in \catt{T}$, so that $\si_M$ is an 
isomorphism.  By Proposition \ref{prop:1165} the morphism $\xi^{\mrm{R}}_M$ is 
also an isomorphism. Hence $\si^{\mrm{R}}_M$ is an isomorphism, and  
$M \in \cat{D}(A)_{\catt{T} \tup{-tor}}$.

Now to the general case. We know that  $\catt{T}$ is a thick abelian 
subcategory of $\cat{M}(A)$, the functors $\mrm{R} \Ga_{\catt{T}}$ and 
$\opn{Id}$ have finite cohomological dimensions, and $\si^{\mrm{R}}_M$ is an 
isomorphism for all $M \in \catt{T}$. 
Therefore we can apply \cite[Proposition I.7.1]{RD} (the way-out argument; see 
also \cite[Theorem 2.10]{Ye1}) to the morphism of functors 
$\si^{\mrm{R}} : \mrm{R} \Ga_{\catt{T}} \to \opn{Id}$. The conclusion is that 
$\si^{\mrm{R}}_M$ is an isomorphism for every 
$M \in \cat{D}_{\catt{T}}(A)$, which means that every such $M$ belongs to
$\cat{D}(A)_{\catt{T} \tup{-tor}}$.
\end{proof}

\begin{thm} \label{thm:1080}
Let $A$ be a ring, and let $\catt{T}$ be a weakly stable finite 
dimensional quasi-compact torsion class in $\cat{M}(A)$. 
Then the copointed triangulated functor 
$(\mrm{R} \Ga_{\catt{T}}, \si^{\mrm{R}})$ on $\cat{D}(A)$
is idempotent, and the triangulated functor $\mrm{R} \Ga_{\catt{T}}$ is 
finite dimensional and quasi-compact. 
\end{thm}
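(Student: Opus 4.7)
The proof should be essentially a direct assembly of the machinery built up in Sections 1 and 2, applied to the specific copointed functor $(\Ga_{\catt{T}}, \si)$. No new ideas are needed beyond checking that the hypotheses of the existing theorems are satisfied.

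First, I would verify that $(\Ga_{\catt{T}}, \si)$, as constructed in Section 3, is an idempotent copointed additive functor on $\cat{M}(A)$ in the sense of Definition \ref{dfn:1003}. This is immediate from the discussion following Definition \ref{dfn:1415}: the functor $\Ga_{\catt{T}}$ is left exact and additive, the inclusions $\si_M : \Ga_{\catt{T}}(M) \to M$ assemble into a morphism of functors $\si : \Ga_{\catt{T}} \to \opn{Id}_{\cat{M}(A)}$, and the equalities $\Ga_{\catt{T}}(\Ga_{\catt{T}}(M)) = \Ga_{\catt{T}}(M)$ as submodules of $M$ make both $\si_{\Ga_{\catt{T}}(M)}$ and $\Ga_{\catt{T}}(\si_M)$ the identity, hence isomorphisms.

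Next, translate the three hypotheses on the torsion class into hypotheses on the functor via Definition \ref{dfn:1037}: weak stability of $\catt{T}$ means $\Ga_{\catt{T}}$ is weakly stable (Definition \ref{dfn:1225}(1)); finite dimensionality of $\catt{T}$ means $\Ga_{\catt{T}}$ has finite right cohomological dimension; and quasi-compactness of $\catt{T}$ means each $\mrm{R}^q \Ga_{\catt{T}}$ is a quasi-compact functor in the sense of Definition \ref{dfn:1300}.

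With these translations in hand, the idempotence of the copointed triangulated functor $(\mrm{R} \Ga_{\catt{T}}, \si^{\mrm{R}})$ on $\cat{D}(A)$ follows immediately by applying Theorem \ref{thm:1004} to $(F, \si) = (\Ga_{\catt{T}}, \si)$. The remaining claims, namely that $\mrm{R} \Ga_{\catt{T}}$ is both quasi-compact and finite dimensional as a triangulated endofunctor of $\cat{D}(A)$, follow by applying Theorem \ref{thm:1001} (with $B = A$) to $F = \Ga_{\catt{T}}$; its hypotheses are exactly the finite right cohomological dimension of $\Ga_{\catt{T}}$ and the quasi-compactness of each $\mrm{R}^q \Ga_{\catt{T}}$, both of which we just noted.

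Since everything reduces to citations, there is no real obstacle; the only small thing worth noting explicitly in the write-up is that Theorem \ref{thm:1001} is stated for a functor $\cat{M}(A) \to \cat{M}(B)$ into a possibly different target ring, so I would point out that the specialization $B = A$ yields the desired statement about the endofunctor $\mrm{R} \Ga_{\catt{T}}$ of $\cat{D}(A)$.
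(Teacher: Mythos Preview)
Your proposal is correct and matches the paper's own proof essentially verbatim: the paper likewise observes that $(\Ga_{\catt{T}}, \si)$ is an idempotent copointed left exact functor, unpacks Definition \ref{dfn:1037} to obtain the hypotheses on $\Ga_{\catt{T}}$, and then cites Theorems \ref{thm:1001} and \ref{thm:1004}. There is nothing to add.
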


\begin{proof}
The functor $\Ga_{\catt{T}}$ is always left exact, and the copointed additive 
functor $(\Ga_{\catt{T}}, \si)$ is always idempotent.
Looking at Definition \ref{dfn:1037}, we see that moreover the functor 
$\Ga_{\catt{T}}$ is weakly stable and has finite right cohomological dimension, 
and the functors $\mrm{R}^q \Ga_{\catt{T}}$ are quasi-compact. So this theorem 
is a special case of Theorems \ref{thm:1001} and \ref{thm:1004}. 
\end{proof}

Here are several examples of weakly stable torsion classes. 

\begin{exa} \label{exa:1315}
Let $A$ be a noetherian commutative ring, and let $\a$ be any ideal in $A$.
Then $\catt{T}_{\a}$ is a stable torsion class in $\cat{M}(A)$. One way to 
prove 
this is by the structure theory of injective $A$-modules (see the proof of 
\cite[Theorem 4.34]{PSY1}). By Proposition \ref{prop:1165}, every module 
$M \in \catt{T}_{\a}$ is $\catt{T}_{\a}$-flasque.
\end{exa}

\begin{exa} \label{exa:1316}
Let $A$ be a commutative ring, and let $\a$ be an ideal in $A$. 
In Section \ref{sec:comm-rings} we will show that the ideal $\a$ is {\em weakly 
proregular} if and only if the torsion class $\catt{T}_{\a}$ is weakly stable.
\end{exa}

\begin{exa} \label{exa:1650}
Here we present a torsion class that is {\em weakly stable 
but not stable}. Let $A$ be a commutative ring that is not noetherian, 
for instance 
\[ A = \Z[s_0, s_1, \ldots ] , \]
the polynomial ring in countably many variables. Let 
\[ B := A[t] \cong A \ot_{\Z} \Z[t] , \]
the polynomial ring in a variable $t$, and let 
$\b := (t) \sub B$. The torsion class 
$\catt{T} := \catt{T}_{\b} \sub \cat{M}(B)$
is weakly stable, by Example \ref{exa:1325} and Theorem \ref{thm:47}.

Now let us suppose, for the sake of contradiction, that $\catt{T}$ is stable. 
Consider any countable collection 
$\{ I_p \}_{p \in \N}$ of injective $A$-modules. Then
$I := \prod_p I_p$ is injective over $A$, so 
$\opn{Hom}_{A}(B, I)$ is injective over $B$. Stability of $\catt{T}$ says that 
$J := \Ga_{\b} \bigl( \opn{Hom}_{A}(B, I) \bigr)$
is also injective over $B$. Since $A \to B$ is flat, it follows that $J$ is 
injective over $A$. But as $A$-modules there is an isomorphism 
$B \cong A^{\oplus \N}$, and thus 
$\opn{Hom}_{A}(B, I) \cong I^{\times \N}$ and 
$J \cong I^{\oplus \N}$. Each $I_p$ is a direct summand of $I$, and therefore 
$\bigoplus_{p \in \N} I_p$ is a direct summand of $J$. 
The conclusion is that every countable direct sum of injective $A$-modules is 
injective. By the Bass-Papp Theorem (see \cite[Theorem 3.46(3)]{La}) this 
implies that the ring $A$ is noetherian, contradicting our assumptions. 
\end{exa}

\begin{exa} \label{exa:1065}
Let $A$ be a ring, and let $\a$ be a two-sided ideal in $A$ that is finitely 
generated as a left ideal. In Definition \ref{dfn:1065} we introduced the 
torsion class $\catt{T}_{\a}$. 
It is possible to show that $\catt{T}_{\a}$ is a stable torsion class if and 
only if the ideal $\a$ has the {\em left Artin-Rees property}, in the 
sense of \cite[Chapter 13]{GW}.

If $A$ happens to be a noetherian commutative ring, as in Example 
\ref{exa:1315}, then the original Artin-Rees property holds. This furnishes 
another proof for that case. 
\end{exa}

\begin{exa} \label{exa:1416}
Let $A$ be a ring and let $S$ be a left denominator set in it. Then the left 
Ore localization $A_S = A[S^{-1}]$ exists. 
Define 
\[ \catt{T}_S := \{ M \in \cat{M}(A) \mid A_S \ot_A M = 0 \} . \]
This is a torsion class in $\cat{M}(A)$. 
It can be shown that the torsion class $\catt{T}_S$ is weakly stable if and 
only if it has dimension $\leq 1$. This is \cite[Theorem 4.4]{Vy}. 

In the special case where $A$ is a commutative integral domain, it can be shown 
that this condition holds, and thus $\catt{T}_S$ is weakly stable in this case. 
\end{exa}

\begin{exa} \label{exa:1417}
Let $A$ be a ring, and let $\aa = (a_1, \ldots, a_n)$ be a sequence of 
elements in it. Let $\a \sub A$ be the two-sided ideal generated by $\aa$.

If the elements $a_i$ are contained in a central subring 
$C \sub A$ such that $A$ is flat over $C$, and moreover the sequence $\aa$ is 
weakly proregular in $C$ (Definition \ref{dfn:48}), then we are basically back 
in Example \ref{exa:1316}, and the torsion class $\catt{T}_{\a}$ is weakly 
stable. 

If $n = 1$ and $a_1$ is a regular normalizing element, then 
$\catt{T}_{\a}$ is weakly stable. The proof is very much like the commutative 
case, as in Section \ref{sec:comm-rings}. See \cite[Lemma 6.4]{Vy}. 

A more complicated situation is when the sequence $\aa$ is regular normalizing 
in $A$, and $n > 1$. We believe that in this case too the torsion class 
$\catt{T}_{\a}$ is weakly stable, but this has not been verified. 
\end{exa}

\begin{exa} \label{exa:1066}
Here is a graded variant, due to M. Van den Bergh in his seminal paper 
\cite{VdB}. Let $\K$ be a field, and let $A = \bigoplus_{i \geq 0} A_i$ be a 
noetherian connected graded central $\K$-ring. Recall that ``connected'' means 
that $A_0 = \K$, and each $A_i$ is a finitely generated $\K$-module. 
The augmentation ideal of $A$ is $\m := \bigoplus_{i > 0} A_i$.
We view $\K$ as an $A$-bimodule using the isomorphism 
$A / \m \cong \K$. 

Let $\cat{M}(A, \mrm{gr})$ be the abelian category of {\em graded} left 
$A$-modules, with degree preserving homomorphisms. 
Inside $\cat{M}(A, \mrm{gr})$ there is the torsion class $\catt{T}_{\m}$ of 
$\m$-torsion modules; cf.\ Definition \ref{dfn:1065}. Let 
$A^* := \opn{Hom}_{\K}(A, \K)$, the graded $\K$-linear dual of $A$.
It is $\m$-torsion, and injective in the category $\cat{M}(A, \mrm{gr})$. 
Every $\m$-torsion graded module $M$ can be embedded in a sufficiently large 
direct sum $I = \bigoplus_{x \in X} \, A^*$.
Because $A$ is noetherian, this module $I$ is injective in the category 
$\cat{M}(A, \mrm{gr})$. This implies that 
$\catt{T}_{\m} \sub \cat{M}(A, \mrm{gr})$ is a stable torsion 
class. The noetherian property also guarantees that $\catt{T}_{\m}$ is 
quasi-compact. 

Now let us consider the connected graded central $\K$-ring 
$A^{\mrm{en}} := A \ot_{\K} A^{\mrm{op}}$. 
This is often not a noetherian ring. 
Inside the abelian category $\cat{M}(A^{\mrm{en}}, \mrm{gr})$
we have the bimodule torsion class 
$\catt{T} = \catt{T}_{\m}$, 
consisting of the bimodules $M$ that are $\m$-torsion as left $A$-modules. 
There is a mirror-image bimodule torsion class 
$\catt{T}^{\mrm{op}} = \catt{T}_{\m^{\mrm{op}}} \subseteq 
\cat{M}(A^{\mrm{en}}, \mrm{gr})$,
consisting of the bimodules $M$ that are $\m^{\mrm{op}}$-torsion, 
or in other words, that are $\m$-torsion as right modules.
Van den Bergh proved (not in this terminology of course) that 
the torsion classes 
$\catt{T}, \catt{T}^{\mrm{op}} \subseteq \cat{M}(A^{\mrm{en}}, \mrm{gr})$
are weakly stable and quasi-compact.
Furthermore, he proved that if 
$\catt{T} \sub \cat{M}(A, \mrm{gr})$ is finite dimensional, 
then so is $\catt{T} \sub \cat{M}(A^{\mrm{en}}, \mrm{gr})$. 
The same is true for the torsion class $\catt{T}^{\mrm{op}}$. 

One of the main technical result in \cite{VdB} is this (in our terminology): 
Assume that the torsion classes $\catt{T}$ and $\catt{T}^{\mrm{op}}$ are both 
finite dimensional. If $M \in \cat{D}(A^{\mrm{en}}, \mrm{gr})$
satisfies 
\[ \mrm{R} \Ga_{\catt{T}}(M) \, , \, \mrm{R} \Ga_{\catt{T}^{\mrm{op}}}(M)
\, \in \, \catt{T} \cap \catt{T}^{\mrm{op}} , \]
then 
$\mrm{R} \Ga_{\catt{T}}(M) \cong \mrm{R} \Ga_{\catt{T}^{\mrm{op}}}(M)$
in $\cat{D}(A^{\mrm{en}}, \mrm{gr})$. 
This is the prototype for our Theorem \ref{thm:1379} on {\em symmetric derived 
torsion}. 
\end{exa}

\section{Weakly Proregular Ideals in Commutative Rings} \label{sec:comm-rings}

In this section we compare the noncommutative torsion picture from Section 
\ref{sec:tors-cls} to the commutative picture that was studied in
\cite{LC}, \cite{SGA2}, \cite{Ma}, \cite{GM}, \cite{AJL}, \cite{Sn}, \cite{KS3} 
and \cite{PSY1}. This comparison will provide motivation for the subsequent 
sections of our paper. 

Throughout this section $A$ is a nonzero commutative ring. 
We are going to follow the notation and definitions of \cite{PSY1}.
Moreover, we shall quote the necessary results from \cite{PSY1}; this is done 
for the sake of convenience, even though some of the results have been proved 
(wholly or partially) in earlier tests. Note that some of the notation of 
\cite{PSY1} differs from that of earlier texts, such as \cite{LC} and \cite{Bo}. 
 
Let $\aa = (a_1, \ldots, a_n)$ be a sequence of elements of $A$. 
To this sequence we associate the {\em Koszul complex} 
$\opn{K}(A; \aa)$, which is a bounded complex of free $A$-modules,
concentrated in degrees $-n, \ldots, 0$.
The Koszul complex of a single element $a \in A$ is 
\[ \opn{K}(A; a) := \bigl( \cdots \to 0 \to A \xar{\cd a} A \to 0 \to 
\cdots \bigr) , \]
concentrated in degrees $-1, 0$. The Koszul complex of the sequence $\aa$ is
\[ \opn{K}(A; \aa) := \opn{K}(A; a_1) \ot_{A} \cdots  \ot_{A}
\opn{K}(A; a_n) . \]
Thus $\opn{K}(A; \aa)^0 = A$, and $\opn{K}(A; \aa)^{-1}$ is a free $A$-module 
of rank $n$. 

Note that $\opn{K}(A; \aa)$ is actually a commutative DG (differential graded) 
ring,  and there is a canonical ring isomorphism 
\[ \opn{H}^0(\opn{K}(A; \aa)) \cong A / \a , \]
where $\a \subseteq A$ is the ideal generated by the sequence $\aa$. 

For each $i \geq 1$ let $\aa^i$ be the sequence 
$(a_1^i, \ldots, a_n^i)$. There is a corresponding Koszul complex 
$\opn{K}(A; \aa^i)$. For $j \geq i$ there is a DG ring homomorphism 
\[ \opn{K}(A; \aa^j) \to \opn{K}(A; \aa^i) , \]
that in degree $0$ is the identity of $A$, and in degree $-1$ is multiplication 
by the sequence $\aa^{j - i}$. In this way the collection of complexes of 
$A$-modules (or of DG rings) 
$\{ \opn{K}(A; \aa^i) \}_{i \geq 1}$
becomes an inverse system.  

Recall that an inverse system of modules 
$\{ M_i \}_{i \geq 1}$ is called {\em pro-zero} if for every $i$ 
there is some $j \geq i$ such that the homomorphism 
$M_j \to M_i$ is zero. In the literature, this condition is occasionally 
called the {\em trivial Mittag-Leffler condition}.

\begin{dfn} \label{dfn:48}
A finite sequence $\aa = (a_1, \ldots, a_n)$ in $A$ is called {\em weakly 
proregular} if for every $p < 0$ the inverse system of $A$-modules 
\[ \bigl\{ \opn{H}^p(\opn{K}(A; \aa^i)) \bigr\}_{i \geq 1} \]
is pro-zero.
\end{dfn}

\begin{dfn} \label{dfn:49}
An ideal $\a \subseteq A$ is called {\em weakly proregular}
if it is generated by some weakly proregular sequence $\aa$. 
\end{dfn}

Definition \ref{dfn:48} was first considered by Grothendieck in 
\cite{SGA2} and \cite{LC}. The name ``weakly proregular'' was given in 
\cite[Correction]{AJL}. 

The {\em dual Koszul complex} of $\aa$ is the complex 
\[ \opn{K}^{\vee}(A; \aa) := 
\opn{Hom}_A \bigl( \opn{K}(A; \aa), A \bigr) . \]
The  collection of complexes
$\{ \opn{K}^{\vee}(A; \aa^i) \}_{i \geq 1}$
is a direct system, and in the limit we obtain the 
{\em infinite dual Koszul complex}
\[ \opn{K}^{\vee}_{\infty}(A; \aa) := 
\lim_{i \to} \opn{K}^{\vee}(A; \aa^i) . \]
It is a complex of flat $A$-modules, concentrated in degrees $0, \ldots, n$. 
The infinite dual Koszul complex of a single element $a \in A$ looks like this:
\begin{equation} \label{eqn:1325}
\opn{K}^{\vee}_{\infty}(A; a) \cong  
\bigl( \cdots \to 0 \to A \to A[a^{-1}] \to 0 \to \cdots \bigr),
\end{equation}
where $A$ sits in degree $0$, and the differential 
$A \to A[a^{-1}]$ is the ring homomorphism.
For the sequence $\aa$ we have an isomorphism of complexes
\begin{equation} \label{eqn:1326}
\opn{K}^{\vee}_{\infty}(A; \aa) \cong 
\opn{K}^{\vee}_{\infty}(A; a_1) \ot_A \cdots \ot_A 
\opn{K}^{\vee}_{\infty}(A; a_n) .
\end{equation}
Let $\a$ be the ideal generated by the sequence $\aa$. 
From formulas (\ref{eqn:1325}) and (\ref{eqn:1326}) it is clear that 
for every $A$-module $M$ there is a canonical isomorphism 
\begin{equation} \label{eqn:1335}
\opn{H}^0 \bigl( \opn{K}^{\vee}_{\infty}(A; \aa) \ot_A M \bigr) \cong 
\Ga_{\a}(M) .
\end{equation}

\begin{thm} \label{thm:45}
Let $\aa$ be a finite sequence in $A$. The following two conditions are 
equivalent\tup{:} 
\begin{enumerate}
\rmitem{i} The sequence $\aa$ is weakly proregular. 

\rmitem{ii} For every injective $A$-module $I$ and every positive integer $p$, 
the module 
\[ \opn{H}^p \bigl (\opn{K}^{\vee}_{\infty}(A; \aa) \ot_A I \bigr) \]
is zero. 
\end{enumerate}
\end{thm}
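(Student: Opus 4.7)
The plan is to compute $\opn{H}^p(\opn{K}^{\vee}_{\infty}(A; \aa) \ot_A I)$ in terms of the Koszul cohomologies $\opn{H}^{-p}(\opn{K}(A; \aa^i))$ that appear in the weak proregularity condition, and then deduce the equivalence by a Hom--colimit manipulation.

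The first step is to observe that each $\opn{K}(A; \aa^i)$ is a bounded complex of finitely generated free $A$-modules, so the canonical map
\[ \opn{Hom}_A\bigl(\opn{K}(A; \aa^i), A\bigr) \ot_A I \to \opn{Hom}_A\bigl(\opn{K}(A; \aa^i), I\bigr) \]
is an isomorphism. Composing with the direct limit over $i$ (which commutes with tensor products and with cohomology), I obtain
\[ \opn{H}^p\bigl(\opn{K}^{\vee}_{\infty}(A; \aa) \ot_A I\bigr) \cong \lim_{i \to}\, \opn{H}^p\bigl(\opn{Hom}_A(\opn{K}(A; \aa^i), I)\bigr). \]
When $I$ is injective, the functor $\opn{Hom}_A(-, I)$ is exact, so it converts cohomology in degree $-p$ into cohomology in degree $p$ of the dual complex, giving
\[ \opn{H}^p\bigl(\opn{K}^{\vee}_{\infty}(A; \aa) \ot_A I\bigr) \cong \lim_{i \to}\, \opn{Hom}_A\bigl(\opn{H}^{-p}(\opn{K}(A; \aa^i)),\, I\bigr). \]
Write $M_i := \opn{H}^{-p}(\opn{K}(A; \aa^i))$; for $p > 0$ this is precisely the inverse system appearing in Definition \ref{dfn:48}.

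For the direction (i) $\Rightarrow$ (ii), suppose $\{M_i\}_{i \geq 1}$ is pro-zero. Any class in the direct limit is represented by some $\phi \in \opn{Hom}_A(M_{i_0}, I)$; choosing $j \geq i_0$ with the transition map $M_j \to M_{i_0}$ equal to zero forces $\phi$ to vanish in $\opn{Hom}_A(M_j, I)$, so the colimit is zero, establishing (ii).

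For (ii) $\Rightarrow$ (i), I argue by contraposition. Suppose the system $\{M_i\}$ is not pro-zero at some index $i_0$, so that the image $N_j$ of $M_j \to M_{i_0}$ is nonzero for every $j \geq i_0$. Choose an injective $A$-module $I$ together with an embedding $\phi : M_{i_0} \hookrightarrow I$ (for instance the injective envelope of $M_{i_0}$). Because $\phi$ is injective on $M_{i_0}$ and $N_j \neq 0$, the precomposition $\phi \circ (M_j \to M_{i_0})$ is nonzero in $\opn{Hom}_A(M_j, I)$ for every $j \geq i_0$. Hence $[\phi]$ is a nonzero class in $\lim_{i \to}\opn{Hom}_A(M_i, I)$, contradicting (ii) applied to this particular injective $I$ and to $p := -(-p) > 0$. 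No single step is especially delicate; the only place where care is needed is the standard argument that $\opn{Hom}$ into a sufficiently large injective detects non-pro-zeroness, which is handled by embedding into an injective envelope rather than invoking a global injective cogenerator.
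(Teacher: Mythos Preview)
Your argument is correct. The key computation
\[
\opn{H}^p\bigl(\opn{K}^{\vee}_{\infty}(A;\aa)\ot_A I\bigr)\;\cong\;
\lim_{i\to}\,\opn{Hom}_A\bigl(\opn{H}^{-p}(\opn{K}(A;\aa^i)),\,I\bigr)
\]
is valid because each $\opn{K}(A;\aa^i)$ is a bounded complex of finite free modules and $I$ is injective, and both implications you draw from it are sound. In the direction (ii) $\Rightarrow$ (i), embedding $M_{i_0}$ into its injective envelope is exactly the right device: injectivity of $\phi$ guarantees that $\phi$ restricted to each nonzero image $N_j$ is nonzero, so $[\phi]$ survives in the colimit. (The phrase ``$p:=-(-p)$'' at the end is just noise and can be deleted.)

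Note, however, that the paper does \emph{not} supply its own proof of this theorem: immediately after the statements of Theorems~\ref{thm:45} and~\ref{thm:51} the authors write that these results go back to \cite{SGA2} and \cite{LC}, and refer the reader to \cite[Theorem 3.24]{PSY1} for a complete proof in this language. Your argument is essentially the standard one that appears in those references, so there is nothing to compare against here beyond confirming that you have reproduced it correctly.
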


\begin{thm} \label{thm:51}
If $A$ is noetherian, then every finite sequence $\aa$ in $A$ is weakly 
proregular. 
\end{thm}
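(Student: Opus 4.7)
The plan is to verify condition (ii) of Theorem \ref{thm:45}: for every injective $A$-module $I$ and every integer $p > 0$, the $A$-module $\opn{H}^p(\opn{K}^{\vee}_{\infty}(A; \aa) \ot_A I)$ vanishes. Theorem \ref{thm:45} will then immediately yield that $\aa$ is weakly proregular.

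First I would reduce to the case of an indecomposable injective. Since $A$ is noetherian, the Matlis structure theorem decomposes any injective $A$-module $I$ as a direct sum $\bigoplus_x E(A/\p_x)$ of indecomposables, indexed by primes $\p_x \sub A$. Because $\opn{K}^{\vee}_{\infty}(A; \aa)$ is a bounded complex of flat $A$-modules, and because both tensor product and cohomology commute with arbitrary direct sums, it suffices to prove the vanishing when $I = E(A/\p)$ for a single prime $\p \sub A$.

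The proof is then completed by a case analysis according to whether $\a \sub \p$, where $\a$ is the ideal generated by $\aa$. If $\a \not\sub \p$, choose an index $i$ with $a_i \notin \p$; since $E(A/\p)$ is an $A_\p$-module, $a_i$ acts invertibly on it, so by (\ref{eqn:1325}) the two-term complex $\opn{K}^{\vee}_{\infty}(A; a_i) \ot_A E(A/\p) = \bigl( E(A/\p) \xar{\cong} E(A/\p)[a_i^{-1}] \bigr)$ is acyclic. Regrouping the tensor product (\ref{eqn:1326}) and using that the remaining factors are bounded complexes of flat modules, the full complex $\opn{K}^{\vee}_{\infty}(A; \aa) \ot_A E(A/\p)$ is then acyclic. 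If instead $\a \sub \p$, every element of $E(A/\p)$ is annihilated by some power of $\p$ and hence by some power of $\a$, so $E(A/\p)$ is $\a$-torsion. In particular each localization $E(A/\p)[a_j^{-1}]$ vanishes, so each factor $\opn{K}^{\vee}_{\infty}(A; a_j) \ot_A E(A/\p)$ is quasi-isomorphic to $E(A/\p)$ concentrated in degree zero. A short induction on $n$, regrouping (\ref{eqn:1326}) one factor at a time and using that tensoring with a bounded complex of flat modules preserves quasi-isomorphisms, shows that $\opn{K}^{\vee}_{\infty}(A; \aa) \ot_A E(A/\p)$ is itself quasi-isomorphic to $E(A/\p)$ in degree zero, which gives the required vanishing in positive degrees.

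The main care is needed in the inductive step of the second case, where the quasi-isomorphisms must be propagated through iterated tensor products with bounded flat complexes. The reduction to $E(A/\p)$ and the first case are essentially formal once Matlis theory and the explicit description (\ref{eqn:1325}) are available.
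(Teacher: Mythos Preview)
Your proof is correct. Note that the paper does not give its own proof of this theorem; it simply cites the literature (\cite{SGA2}, \cite{LC}, and \cite[Corollary 5.4]{PSY1}) for a full argument. Your approach---using the characterization in Theorem~\ref{thm:45}, reducing to indecomposable injectives $E(A/\p)$ via Matlis theory, and then splitting into the cases $\a \subseteq \p$ and $\a \not\subseteq \p$---is precisely the standard argument one finds in those references, so there is nothing to compare.

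One minor simplification: in the case $\a \subseteq \p$ you do not actually need the induction. Since every element of $E(A/\p)$ is $\a$-torsion, each positive-degree term of $\opn{K}^{\vee}_{\infty}(A;\aa) \ot_A E(A/\p)$ is a finite direct sum of localizations $E(A/\p)[a_{j_1}^{-1}, \ldots, a_{j_p}^{-1}]$, all of which vanish outright. So the complex is literally $E(A/\p)$ concentrated in degree zero, not just quasi-isomorphic to it.
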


Theorems \ref{thm:45} and \ref{thm:51} had first appeared in \cite{SGA2} 
and \cite{LC}, stated in different terminology of course. 
See \cite[Theorem 3.24]{PSY1} and \cite[Corollary 5.4]{PSY1},
that are stated in this language, and have full proofs. 

\begin{thm}[{\cite[Corollary 5.4]{PSY1}}] \label{thm:48}
Let $\aa$ and $\bsym{b}$ be finite sequences in $A$, and let $\a$ and $\b$ 
be the respective ideals that they generate. If the sequence $\aa$ 
is weakly proregular, and if there is equality $\sqrt{\a} = \sqrt{\b}$, then 
the sequence $\bsym{b}$ is  weakly proregular.
\end{thm}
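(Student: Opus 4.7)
The plan is to verify the criterion from Theorem \ref{thm:45} for $\bsym{b}$: for every injective $A$-module $I$ and every $p > 0$, I want to show $\opn{H}^p(\opn{K}^{\vee}_{\infty}(A; \bsym{b}) \ot_A I) = 0$. My strategy is to route the computation through the concatenated sequence $(\aa, \bsym{b})$, establishing the vanishing there by exploiting the weak proregularity of $\aa$, and then to transfer the vanishing back to $\bsym{b}$ using the torsion structure of the intervening cohomology.

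Two preliminary observations will be used throughout. First, since $\sqrt{\a} = \sqrt{\b}$ and both ideals are finitely generated, powers of one are contained in powers of the other; hence an $A$-module is $\a$-torsion if and only if it is $\b$-torsion, and $\Ga_{\a} = \Ga_{\b}$. Second, for any finite sequence $\bsym{c}$ generating an ideal $\c$: (i) if $M$ is $\c$-torsion, then $M[c_i^{-1}] = 0$ for each $i$, so $\opn{K}^{\vee}_{\infty}(A; c_i) \ot_A M$ is quasi-isomorphic to $M$ concentrated in degree $0$, and iterating (\ref{eqn:1326}) yields $\opn{K}^{\vee}_{\infty}(A; \bsym{c}) \ot_A M \simeq M$ in $\cat{D}(A)$; and (ii) every cohomology $\opn{H}^p(\opn{K}^{\vee}_{\infty}(A; \bsym{c}) \ot_A M)$ is $\c$-torsion, by induction on the length of $\bsym{c}$, starting from the single-element case where $\opn{H}^0 \cong \Ga_c(M)$ and $\opn{H}^1 \cong M[c^{-1}]/M$ are both $c$-torsion.

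The first move: by weak proregularity of $\aa$, Theorem \ref{thm:45} together with (\ref{eqn:1335}) gives $\opn{K}^{\vee}_{\infty}(A; \aa) \ot_A I \simeq \Ga_{\a}(I)$ in $\cat{D}(A)$. Since $\opn{K}^{\vee}_{\infty}(A; \bsym{b})$ is a bounded complex of flat modules, tensoring with it preserves quasi-isomorphisms; applying (i) to the $\b$-torsion module $\Ga_{\a}(I) = \Ga_{\b}(I)$ and using (\ref{eqn:1326}) then yields
\[ \opn{K}^{\vee}_{\infty}(A; (\aa, \bsym{b})) \ot_A I \ \simeq \ \opn{K}^{\vee}_{\infty}(A; \bsym{b}) \ot_A \Ga_{\a}(I) \ \simeq \ \Ga_{\a}(I) \]
in $\cat{D}(A)$. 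In particular, the higher cohomology of $\opn{K}^{\vee}_{\infty}(A; (\aa, \bsym{b})) \ot_A I$ vanishes.

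The second move: set $Y := \opn{K}^{\vee}_{\infty}(A; \bsym{b}) \ot_A I$. By (ii) each $\opn{H}^q(Y)$ is $\b$-torsion, hence $\a$-torsion. The hyperhomology spectral sequence
\[ E_2^{p,q} = \opn{H}^p(\opn{K}^{\vee}_{\infty}(A; \aa) \ot_A \opn{H}^q(Y)) \ \Rightarrow \ \opn{H}^{p+q}(\opn{K}^{\vee}_{\infty}(A; \aa) \ot_A Y) \]
(convergent because $\opn{K}^{\vee}_{\infty}(A; \aa)$ is a bounded flat complex and $Y$ is bounded) collapses onto $p = 0$ by (i), with $E_2^{0,q} \cong \opn{H}^q(Y)$. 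Consequently, $\opn{H}^n(Y) \cong \opn{H}^n(\opn{K}^{\vee}_{\infty}(A; (\aa, \bsym{b})) \ot_A I) = 0$ for $n > 0$, and Theorem \ref{thm:45} then yields the weak proregularity of $\bsym{b}$. The chief subtlety is running the spectral sequence cleanly; a cleaner alternative is to prove the quasi-isomorphism $Y \simeq \opn{K}^{\vee}_{\infty}(A; \aa) \ot_A Y$ directly by induction on the cohomological amplitude of $Y$, using standard truncation triangles together with fact (i).
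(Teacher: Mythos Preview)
The paper does not supply its own proof of this theorem; it is quoted from \cite[Corollary 5.4]{PSY1}, with the only further comment being that a weaker version had appeared as \cite[Corollary 3.1.5]{AJL}. So there is no in-paper argument to compare against.

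Your proof is correct. The two preliminary facts you isolate are both valid; fact (ii) is exactly Lemma \ref{lem:1335}(2) of the paper, where it is proved by the contractibility of $A[c_j^{-1}] \otimes_A \opn{K}^{\vee}_{\infty}(A; \bsym{c})$, which is slightly cleaner than the induction you sketch (and avoids the minor imprecision ``$M[c^{-1}]/M$'', which should read $\opn{Coker}(M \to M[c^{-1}])$). The passage through the concatenated sequence $(\aa, \bsym{b})$ together with the collapse of the hypercohomology spectral sequence is a sound and efficient route; the alternative you mention, via truncation triangles, works equally well.
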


A weaker version of Theorem \ref{thm:48} had previously appeared as 
\cite[Corollary 3.1.5]{AJL}.

Suppose $\a$ is a finitely generated ideal in $A$. 
The $\a$-torsion functor $\Ga_{\a}$ was discussed in Section 
\ref{sec:tors-cls}. 
It is part of an idempotent copointed additive functor 
$(\Ga_{\a}, \si)$ on $\cat{M}(A)$. As explained in Section 
\ref{sec:idem-cop-func}, there is a corresponding copointed triangulated 
functor $(\mrm{R} \Ga_{\a}, \si^{\mrm{R}})$ on $\cat{D}(A)$. 
A complex $M \in \cat{D}(A)$ is called a {\em derived $\a$-torsion complex}
if $\si^{\mrm{R}}_M : \mrm{R} \Ga_{\a}(M) \to M$ is an isomorphism. 
(In \cite{PSY1} the name for this property was {\em cohomologically 
$\a$-torsion}.) The full subcategory of $\cat{D}(A)$ on the derived 
$\a$-torsion complexes is denoted by 
$\cat{D}(A)_{\a\tup{-tor}}$. It is a triangulated category. 

The $\a$-adic completion is the additive functor 
$\La_{\a} : \cat{M}(A) \to \cat{M}(A)$
defined by 
\[ \La_{\a}(M) := \lim_{\leftarrow i} \, (M / \a^i \cd M) . \]
There is a morphism of functors 
$\tau : \opn{Id} \to \La_{\a}$, and the pair 
$(\La_{\a}, \tau)$ is an idempotent pointed additive functor on $\cat{M}(A)$.
There is a corresponding pointed triangulated 
functor $(\mrm{L} \La_{\a}, \tau^{\mrm{L}})$ on $\cat{D}(A)$. 
A complex $M \in \cat{D}(A)$ is called a
{\em derived $\a$-adically complete complex}
if $\tau^{\mrm{L}}_M : M \to \mrm{L} \La_{\a}(M)$ is an isomorphism. 
(In \cite{PSY1} the name for this property was {\em cohomologically 
$\a$-adically complete}.)  The full subcategory of $\cat{D}(A)$ on the 
derived $\a$-adically complete complexes is denoted by 
$\cat{D}(A)_{\a\tup{-com}}$. It is a triangulated category. 

It turns out that if $\a$ is weakly proregular, then the triangulated 
(co)pointed functors 
$(\mrm{R} \Ga_{\a}, \si^{\mrm{R}})$ and 
$(\mrm{L} \La_{\a}, \tau^{\mrm{L}})$ are idempotent. 
This is partly proved in \cite[Corollary 4.30 and Proposition 7.10]{PSY1} --
these results only state that 
$\si^{\mrm{R}}_{\mrm{R} \Ga_{\a}(M)}$ and 
$\tau^{\mrm{L}}_{\mrm{L} \La_{\a}(M)}$
are isomorphisms. But the full idempotence is an immediate consequence of
\cite[Lemma 7.9]{PSY1}. Moreover, the MGM Equivalence (Theorem 
\ref{thm:1375} in the Introduction) is proved in \cite{PSY1}. 

If $\aa$ is a finite sequence of elements that generates $\a$ (so it is 
a weakly proregular sequence by Theorem \ref{thm:48}), 
then there is a better representative for 
$\mrm{R} \Ga_{\a}(A)$ in $\cat{D}(A)$ than the infinite Koszul complex
$\opn{K}^{\vee}_{\infty}(A; \aa)$  -- it is the {\em telescope complex}
$\opn{Tel}(A; \aa)$. This is a bounded complex of countable rank free $A$ 
modules, so in particular it is K-projective. The derived functors take these 
nice forms:
\[ \mrm{R} \Ga_{\a}(M) \cong \opn{Tel}(A; \aa) \ot_A M \]
and
\[ \mrm{L} \La_{\a}(M) \cong \opn{Hom}_A \bigl( \opn{Tel}(A; \aa), M \bigr) . \]
In the terminology of the Section \ref{sec:cop-obj}, 
$\opn{Tel}(A; \aa)$ is an idempotent copointed object in the monoidal category 
$\cat{D}(A)$. 
 
\begin{rem} \label{rem:2035}
We should mention that the notion of telescope complex is familiar in algebraic 
topology, usually as an abstract homotopy colimit; cf.\ \cite{GM}. The 
explicit construction of the telescope complex $\opn{Tel}(A; \aa)$ 
can be found in \cite{Sn} and \cite{PSY1}. 
\end{rem}

Here are several examples of weakly proregular ideals. 

\begin{exa} \label{exa:1325}
Let $A$ be a commutative ring. If $\aa =  (a_1, \ldots, a_n)$ is a regular 
sequence in $A$, then it it weakly proregular.
\end{exa}
  
\begin{exa} \label{exa:1327}
Let $A$ and $B$ be the rings from Example \ref{exa:1650}. So $A$ is a 
non-noetherian commutative ring, $B = A[t]$ and
$\b = (t) \sub B$.  
Then $B$ is not noetherian, but the ideal $\b \sub B$ is weakly proregular, by 
Example \ref{exa:1325}. 
\end{exa}

\begin{exa} \label{exa:1326}
Let $\K$ be a field of characteristic $0$, and let $\K[[t_1]]$ and $\K[[t_2]]$ 
be the power series rings. Let $A$ be the ring
\[ A := \K[[t_1]] \ot_{\K} \K[[t_2]] , \]
and let $\a \subseteq A$ be the ideal generated by $t_1$ and $t_2$. 
As shown in \cite[Theorem 0.9]{Ye5} the ring $A$ is not noetherian. 
By Example \ref{exa:1325} the ideal $\a$ is weakly proregular.
What is remarkable in this example is that the $\a$-adic completion 
$\what{A}$ is noetherian, and it is not flat over $A$. 
\end{exa}

Now for the main result in this section.
Recall that a finitely generated ideal $\a \subseteq A$ gives rise to a 
torsion class $\catt{T}_{\a} \subseteq \cat{M}(A)$; see Definition 
\ref{dfn:1065}.
If $\b \subseteq A$ is another finitely generated ideal, and 
$\catt{T}_{\a} = \catt{T}_{\b}$, then $\sqrt{\a} = \sqrt{\b}$. 
Theorem \ref{thm:48} tells us that weak proregularity is a property of the 
torsion class $\catt{T}_{\a}$. The next theorem goes one step more: it lets us 
characterize weak proregularity in terms of a ``noncommutative'' property of 
$\catt{T}_{\a}$.

\begin{thm} \label{thm:47}
Let $A$ be a commutative ring, let $\aa$ be a finite sequence of elements of 
$A$, and let $\a$ be the ideal generated by $\aa$. The following two conditions 
are equivalent\tup{:} 
\begin{enumerate}
\rmitem{i} The sequence $\aa$ is weakly proregular. 

\rmitem{ii} The torsion class $\catt{T}_{\a}$ is weakly stable.
\end{enumerate}
\end{thm}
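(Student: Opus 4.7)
The direction (i)$\Rightarrow$(ii) should be the quick one. Assuming $\aa$ is weakly proregular, the results of \cite{PSY1} quoted in this section imply that $\mrm{R}\Ga_\a$ is an idempotent triangulated endofunctor of $\cat{D}(A)$. For any injective $A$-module $I$, since $I$ is K-injective we have $\mrm{R}\Ga_\a(I)\cong\Ga_\a(I)$; applying idempotence gives
\[
\mrm{R}\Ga_\a(\Ga_\a(I))\cong\mrm{R}\Ga_\a(\mrm{R}\Ga_\a(I))\cong\Ga_\a(I),
\]
which is concentrated in degree $0$. Hence $\mrm{R}^q\Ga_\a(\Ga_\a(I))=0$ for $q>0$, proving $\catt{T}_\a$ is weakly stable.

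For (ii)$\Rightarrow$(i), by Theorem \ref{thm:45} it suffices to show $\opn{H}^p(C\ot_A I)=0$ for every injective $A$-module $I$ and every $p>0$, where $C=\opn{K}^{\vee}_{\infty}(A;\aa)$. The linchpin is the following observation: for every $\a$-torsion module $M$, the complex $C\ot_A M$ equals $M$ concentrated in degree $0$. Indeed, $M$ is $(a_i)$-torsion for each $i$, so $M_{a_i}=0$, giving $C_i\ot_A M=(M\to 0)=M$ in degree $0$; iterating over $i=1,\dots,n$ yields $C\ot_A M=M$.

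Applying this observation, together with the flatness of $C$, to the short exact sequence $0\to\Ga_\a(I)\to I\to N\to 0$ where $N:=I/\Ga_\a(I)$, and using the long exact cohomology sequence, we find $\opn{H}^p(C\ot_A I)\cong\opn{H}^p(C\ot_A N)$ for all $p\geq 1$. Meanwhile, weak stability applied to the long exact sequence of $\mrm{R}^{\bullet}\Ga_\a$ for the same short exact sequence forces $\mrm{R}\Ga_\a(N)=0$ in $\cat{D}(A)$. So the problem reduces to showing that $\opn{H}^p(C\ot_A N)=0$ for $p\geq 1$, given the vanishing $\mrm{R}\Ga_\a(N)=0$ together with the hypothesis of weak stability.

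This last step is the main obstacle. The plan is to pick an injective resolution $N\to K^{\bullet}$ and analyze the double complex $D^{p,q}:=C^p\ot_A K^q$ by two spectral sequences. One spectral sequence, filtering by the $C$-degree and using that $C$ is a complex of flat modules with $K^{\bullet}$ a resolution of $N$, identifies the total cohomology with $\opn{H}^{\bullet}(C\ot_A N)$. The other, filtering by the $K$-degree, has $E_1^{0,q}=\Ga_\a(K^q)$ with $E_2^{0,q}=\mrm{R}^q\Ga_\a(N)=0$; for $p\geq 1$, the observation applied to the short exact sequence $0\to\Ga_\a(K^q)\to K^q\to K^q/\Ga_\a(K^q)\to 0$ reduces $E_1^{p,q}$ to $\opn{H}^p(C\ot_A(K^q/\Ga_\a(K^q)))$. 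A careful bootstrapping argument, exploiting that each $\Ga_\a(K^q)$ is $\catt{T}_\a$-flasque by weak stability (so Proposition \ref{prop:1165} and Lemma \ref{lem:1057} apply to the torsion layers), together with the specific structure of $N$ as the derived-torsion-free quotient of an injective, is what is needed to force the total cohomology of $D$ to vanish, completing the proof.
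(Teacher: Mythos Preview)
Your direction (i)$\Rightarrow$(ii) is correct and matches the paper's argument.

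For (ii)$\Rightarrow$(i), your reduction to the module $N = I/\Ga_{\a}(I)$ is valid: the observation $C\ot_A M = M$ for $\a$-torsion $M$ is correct, the isomorphism $\opn{H}^p(C\ot_A I)\cong \opn{H}^p(C\ot_A N)$ for $p\geq 1$ follows as you say, and $\mrm{R}\Ga_{\a}(N)=0$ does follow from weak stability. But your final paragraph is not a proof --- it is circular. In your second spectral sequence, the terms $E_1^{p,q}$ for $p\geq 1$ are $\opn{H}^p(C\ot_A K^q)$, which by your own observation applied to the injective $K^q$ equal $\opn{H}^p(C\ot_A N_q)$ with $N_q := K^q/\Ga_{\a}(K^q)$. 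Each $N_q$ is again a torsion-free quotient of an injective with $\mrm{R}\Ga_{\a}(N_q)=0$; moreover, since $\mrm{R}\Ga_{\a}(N)=0$, the complex $N_{\bullet}$ is itself a resolution of $N$. So the spectral sequence merely re-expresses the original problem in identical terms, with no parameter reduced. The vague ``careful bootstrapping'' does not supply the missing idea.

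What the paper uses instead is a concrete structural fact about the complex $C\ot_A I$, recorded as Lemma~\ref{lem:1335}: for every $p\geq 1$ the module $C^p\ot_A I$ is a direct sum of modules of the form $A[a_j^{-1}]\ot_A L$, hence satisfies $\Ga_{\a}(C^p\ot_A I)=0$ \emph{and} is $\Ga_{\a}$-acyclic; and every $\opn{H}^p(C\ot_A I)$ is $\a$-torsion. The paper then argues by contradiction: take $p$ minimal with $\opn{H}^p(C\ot_A I)\neq 0$, use the exact truncation of $C\ot_A I$ up to degree $p$ (extended arbitrarily beyond) as a $\Ga_{\a}$-acyclic resolution $J^{\bullet}$ of $\Ga_{\a}(I)$, and apply $\Ga_{\a}$. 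Weak stability forces $\Ga_{\a}(J^{\bullet})$ to be acyclic in positive degrees; but $\Ga_{\a}(J^q)=0$ for $1\leq q\leq p$, while the $\a$-torsion class $\opn{H}^p(C\ot_A I)$ injects into $\opn{Ker}\bigl(\Ga_{\a}(J^{p+1})\to\Ga_{\a}(J^{p+2})\bigr)$, producing a nonzero $\opn{H}^{p+1}(\Ga_{\a}(J^{\bullet}))$ --- a contradiction. The decisive input your argument lacks is the $\Ga_{\a}$-acyclicity of the individual terms $C^p\ot_A I$ for $p\geq 1$.
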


We need a lemma first. 

\begin{lem} \label{lem:1335}
Let $M$ be an $A$-module, and consider the complex 
\[ K := \opn{K}^{\vee}_{\infty}(A; \aa) \ot_A M . \]
\begin{enumerate}
\item For every $p \geq 1$ we have 
$\Ga_{\a}(K^p) = 0$, and the module $K^p$ is  right $\Ga_{\a}$-acyclic.

\item For every $p$ the module $\opn{H}^p(K)$ is $\a$-torsion. 
\end{enumerate}
\end{lem}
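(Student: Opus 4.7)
The plan is to exploit the explicit shape of $\opn{K}^{\vee}_{\infty}(A; \aa)$, together with two elementary observations: localizing at $a_i$ makes $a_i$ invertible, and multiplication by $a_i^k$ is null-homotopic on the ordinary Koszul complex $\opn{K}(A; \aa^i)$. Formula (\ref{eqn:1326}) exhibits $\opn{K}^{\vee}_{\infty}(A; \aa)$ as an $n$-fold tensor product of the two-term complexes $A \to A[a_j^{-1}]$, so the degree-$p$ piece is a \emph{finite} direct sum with $\binom{n}{p}$ summands, the $I$-summand being $A[a_i^{-1} : i \in I]$ for each subset $I \subseteq \{1, \dots, n\}$ with $|I| = p$. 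Tensoring with $M$ gives $K^p = \bigoplus_{|I| = p} M_I$ where $M_I := M[a_i^{-1} : i \in I]$, still a finite sum.

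For part (1), fix $p \geq 1$ and pick any $i_0 \in I$, so that $a_{i_0} \in \a$ acts invertibly on $M_I$. I invoke the standard identity
\[ \mrm{R}^q \Ga_{\a}(N) \cong \lim_{k \to} \, \opn{Ext}^q_A(A / \a^k, N), \]
a consequence of $\Ga_{\a} = \lim_{k \to} \opn{Hom}_A(A/\a^k, -)$ together with exactness of filtered direct limits. Multiplication by $a_{i_0}^k$ acts as zero on $A/\a^k$, hence as zero on $\opn{Ext}^q_A(A/\a^k, M_I)$; but it also acts invertibly there through its invertible action on the second argument. Thus $\opn{Ext}^q_A(A/\a^k, M_I) = 0$ for all $k, q$, whence $\mrm{R}^q \Ga_{\a}(M_I) = 0$ for every $q \geq 0$. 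Additivity over the finite direct sum gives $\mrm{R}^q \Ga_{\a}(K^p) = 0$ for every $q \geq 0$, which is simultaneously the vanishing $\Ga_{\a}(K^p) = 0$ and the right $\Ga_{\a}$-acyclicity of $K^p$.

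For part (2), using $\opn{K}^{\vee}_{\infty}(A; \aa) = \lim_{i \to} \opn{K}^{\vee}(A; \aa^i)$ and exactness of filtered direct limits, one has $\opn{H}^p(K) \cong \lim_{i \to} \opn{H}^p \bigl( \opn{K}^{\vee}(A; \aa^i) \ot_A M \bigr)$. For each fixed $i$, multiplication by $a_j^i$ is null-homotopic on $\opn{K}(A; \aa^i)$ (the homotopy is contraction along the $j$-th dual basis element), hence also on $\opn{K}^{\vee}(A; \aa^i) \ot_A M$. Therefore $a_j^i$ annihilates the cohomology for every $j$, so that cohomology is killed by the ideal $(a_1^i, \dots, a_n^i)$, which contains $\a^{ni}$ by pigeonhole. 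Each such cohomology is $\a$-torsion, and since $\catt{T}_{\a}$ is closed under filtered direct limits, $\opn{H}^p(K)$ is $\a$-torsion as well.

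The main subtlety I anticipate is the finiteness of the direct sum in $K^p$: this is what lets me avoid invoking quasi-compactness of $\mrm{R}^q \Ga_{\a}$, which in the non-noetherian commutative setting is a nontrivial matter not yet established at this stage of the paper. Everything else reduces to tracking how multiplication by the $a_i$ acts on Koszul or $\opn{Ext}$ groups.
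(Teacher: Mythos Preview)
Your proof is correct. For part~(1) your argument is essentially the paper's: both identify each summand of $K^p$ as a module on which some $a_j$ acts invertibly, and deduce vanishing of all $\mrm{R}^q\Ga_{\a}$ on it. You make this explicit through the isomorphism $\mrm{R}^q\Ga_{\a}(-) \cong \lim_k \opn{Ext}^q_A(A/\a^k,-)$, while the paper instead notes that $\mrm{R}^q\Ga_{\a}$ always lands in $\a$-torsion, so $a_j$ must act both invertibly and locally nilpotently; these are the same idea phrased two ways. For part~(2) the routes genuinely diverge: the paper observes that $A[a_j^{-1}] \ot_A \opn{K}^{\vee}_{\infty}(A;\aa)$ is contractible (the $j$-th tensor factor becomes the identity map $A[a_j^{-1}] \to A[a_j^{-1}]$), whence $A[a_j^{-1}] \ot_A \opn{H}^p(K) = 0$ for every $j$, giving $\a$-torsion directly. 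Your approach via the finite dual Koszul complexes and null-homotopy of $a_j^i$ is equally valid and perhaps more self-contained; the paper's argument is shorter but relies on recognizing the contractibility.
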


\begin{proof}
These assertions can be found inside the proof of 
\cite[Corollary 3.1.5]{AJL}. For the benefit of the reader, here is a full 
stand-alone proof. 

\medskip \noindent
(1) Let us write the sequence in full: $\aa =  (a_1, \ldots, a_n)$. 
Fix $p \geq 1$. By formulas (\ref{eqn:1325}) and (\ref{eqn:1326}) we know 
that there is an isomorphism 
$K^p \cong \bigoplus_{j = 1}^n K^p_j$, 
where $K^p_j = A[a_j^{-1}] \ot_A L_j$
for certain $A$-modules $L_j$. For every $q$ the action of the element $a_j$ on 
the module $\mrm{R}^q \Ga_{\a}(K^p_j)$
is bijective; yet the action of $a_j$ on any finitely generated submodule
of $\mrm{R}^q \Ga_{\a}(K^p_j)$ is 
nilpotent. This implies that $\mrm{R}^q \Ga_{\a}(K^p_j) = 0$ for all $q$.
We see that $\Ga_{\a}(K^p_j) = 0$, and that  $K^p_j$ is  right 
$\Ga_{\a}$-acyclic. Hence the same is true for $K^p$. 

\medskip \noindent 
(2) By formulas (\ref{eqn:1325}) and (\ref{eqn:1326}), for every $j$ the complex
\[ A[a_j^{-1}] \ot_A \opn{K}^{\vee}_{\infty}(A; \aa) \]
is contractible. Therefore for every $p$
\[  A[a_j^{-1}] \ot_A \opn{H}^p(K) \cong 
\opn{H}^p \bigl( A[a_j^{-1}] \ot_A K \bigr) = 0 . \]
In view of equation (\ref{eqn:1335}), this shows that the module
$\opn{H}^p(K)$ is $a_j$-torsion for all $j$. Hence it is $\a$-torsion. 
\end{proof}

\begin{proof}[Proof of Theorem \tup{\ref{thm:47}}]
(i) $\Rightarrow$ (ii): By \cite[Corollary 4.30]{PSY1}, for every 
$M \in \cat{D}(A)$ the morphism 
\begin{equation} \label{eqn:1328}
\si^{\mrm{R}}_{\mrm{R} \Ga_{\a}(M)} : 
\mrm{R} \Ga_{\a} (\mrm{R} \Ga_{\a} (M)) \to \mrm{R} \Ga_{\a} (M)
\end{equation}
in $\cat{D}(A)$ is an isomorphism. Take any injective $A$-module $I$. Then 
$\mrm{R} \Ga_{\a} (I) \cong \Ga_{\a} (I)$, and formula (\ref{eqn:1328}) says 
that 
$\mrm{R} \Ga_{\a} (\Ga_{\a} (I)) \cong \Ga_{\a}(I)$
in $\cat{D}(A)$. 
Therefore 
\[ \mrm{R}^q \Ga_{\a} (\Ga_{\a} (I)) \cong 
\opn{H}^q(\mrm{R} \Ga_{\a} (\Ga_{\a} (I)) \cong 
\opn{H}^q(\Ga_{\a} (I)) , \]
which is zero for all $q > 0$. 
We see that the module $\Ga_{\a} (I)$ is right $\Ga_{\a}$-acyclic 
(Definition \ref{dfn:1055}). We conclude that the 
functor $\Ga_{\a}$ is weakly stable (Definition \ref{dfn:1225}), and that the 
torsion class $\catt{T}_{\a}$ is weakly stable (Definition \ref{dfn:1037}). 

\medskip \noindent
(ii) $\Rightarrow$ (i): 
According to Theorem \ref{thm:45}, it suffices to show 
that for every injective $A$-module $I$ and every positive integer $p$, the 
module
$\opn{H}^p \bigl (\opn{K}^{\vee}_{\infty}(A; \aa) \ot_A I \bigr)$
is zero. 

The proof is by contradiction. Assume, for the sake of contradiction, 
that for some injective $A$-module $I$, and for some $p \geq 1$, 
we have 
\[ \opn{H}^p \bigl (\opn{K}^{\vee}_{\infty}(A; \aa) \ot_A I \bigr) \neq 0 . \]

Let us write 
\[ K := \opn{K}^{\vee}_{\infty}(A; \aa) \ot_A I . \]
We may assume that $p$ is minimal:
$\opn{H}^p (K) \neq 0$, but $\opn{H}^q (K) = 0$ for all $1 \leq q < p$. 
Thus we have an exact sequence 
\begin{equation} \label{eqn:55}
0 \to \Ga_{\a}(I) \to K^0 \to \cdots \to K^{p - 1} \to K^p .
\end{equation}
Since the module $K^0 \cong I$ is injective, it is right $\Ga_{\a}$-acyclic. 
For $1 \leq q \leq p$ the module $K^q$ is right $\Ga_{\a}$-acyclic
by Lemma \ref{lem:1335}(1). 

Let us now write $J^q := K^q$ for $q \leq p$, and 
continue the exact sequence (\ref{eqn:55}), with its new notation, to an exact 
sequence 
\begin{equation} \label{eqn:56}
0 \to \Ga_{\a}(I) \to J^0 \to \cdots \to J^{p - 1} \to J^p \to 
J^{p + 1} \to \cdots
\end{equation}
in which the modules $J^q$, for $q > p$, are also right $\Ga_{\a}$-acyclic 
(e.g.\ we can take injective $A$-modules). 
So we get a complex $J$ of right $\Ga_{\a}$-acyclic modules that is 
concentrated in nonnegative degrees, and a quasi-isomorphism 
$\Ga_{\a}(I) \to J$. By Lemma \ref{lem:1057} we know that $J$ is a right
$\Ga_{\a}$-acyclic complex, and thus 
\begin{equation} \label{eqn:1334}
\mrm{R}^q \Ga_{\a}(\Ga_{\a}(I)) \cong \opn{H}^q(\Ga_{\a}(J))  
\end{equation}
for all $q$. 

There is an embedding
\[ \opn{H}^{p}(K) \subseteq \opn{Coker}(K^{p - 1} \to K^p)
= \opn{Coker}(J^{p - 1} \to J^p) . \]
On the other hand, due to the exactness of (\ref{eqn:56}), we have 
\[ \opn{Coker}(J^{p - 1} \to J^p) \cong 
\opn{Ker}(J^{p + 1} \to J^{p + 2}) . \]
By Lemma \ref{lem:1335}(2) the module $\opn{H}^{p}(K)$ is $\a$-torsion, 
and therefore we get an embedding 
\[ \opn{H}^{p}(K) \subseteq
\Ga_{\a} \bigl( \opn{Ker}(J^{p + 1} \to J^{p + 2}) \bigr) . \]
But 
\[ \Ga_{\a} \bigl( \opn{Ker}(J^{p + 1} \to J^{p + 2}) \bigr) \cong 
\opn{Ker}\bigl( \Ga_{\a}(J^{p + 1}) \to \Ga_{\a}(J^{p + 2}) \bigr) . \]
Since $ \opn{H}^{p}(K) \neq 0$, we conclude that 
\begin{equation} \label{eqn:1333}
\opn{Ker}\bigl( \Ga_{\a}(J^{p + 1}) \to \Ga_{\a}(J^{p + 2}) \bigr) \neq 0 . 
\end{equation}

The module $\Ga_{\a}(I)$ is right $\Ga_{\a}$-acyclic; this is because $I$ 
is injective and $\catt{T}_{\a}$ is weakly stable.
From equation (\ref{eqn:1334}) we conclude that
\begin{equation} \label{eqn:1337}
\opn{H}^q(\Ga_{\a}(J)) = 0 \ \tup{ for all } q > 0 .
\end{equation}
Also $\Ga_{\a}(\Ga_{\a}(I)) = \Ga_{\a}(I)$. Therefore the sequence
\begin{equation} \label{eqn:57}
0 \to \Ga_{\a}(I) \to \Ga_{\a}(J^0) \to \Ga_{\a}(J^1) \to \cdots 
\to \Ga_{\a}(J^p) \to \Ga_{\a}(J^{p + 1}) \to \cdots ,
\end{equation}
gotten from (\ref{eqn:56}) by applying the functor $\Ga_{\a}$, is exact.
 
Finally, by Lemma \ref{lem:1335}(1) we know that 
$\Ga_{\a}(J^q) = 0$ for all $1 \leq q \leq p$. So the sequence 
(\ref{eqn:57}) looks like this: 
\[ 0 \to \Ga_{\a}(I) \to \Ga_{\a}(J^0) \to 0 \to \cdots 
\to 0 \to \Ga_{\a}(J^{p + 1}) \to \Ga_{\a}(J^{p + 2}) \to  \cdots , \]
with at least one $0$ occurring between $\Ga_{\a}(J^0)$
and $\Ga_{\a}(J^{p + 1})$. 
Combining this with equation (\ref{eqn:1333}) we conclude that 
$\opn{H}^{p + 1}(\Ga_{\a}(J)) \neq 0$. This contradicts (\ref{eqn:1337}).
\end{proof}
 
\begin{cor} \label{cor:1320}
Let $A$ be a commutative ring, and let $\a$ be a finitely generated ideal of 
$A$. If the torsion class $\catt{T}_{\a}$ is weakly stable, then it is also 
quasi-compact and finite dimensional.
\end{cor}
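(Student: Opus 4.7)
The plan is to combine Theorem \ref{thm:47} with the telescope representation of $\mrm{R}\Ga_{\a}$ already cited in this section. First, since $\a$ is finitely generated, pick any finite sequence $\aa = (a_1, \ldots, a_n)$ generating $\a$. By Theorem \ref{thm:47}, the weak stability of $\catt{T}_{\a}$ forces $\aa$ to be weakly proregular.

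Next, invoke the representation displayed earlier in this section: for a weakly proregular generating sequence $\aa$, there is an isomorphism of triangulated functors
\[ \mrm{R}\Ga_{\a}(-) \cong \opn{Tel}(A; \aa) \ot_A (-) \]
on $\cat{D}(A)$, where $\opn{Tel}(A; \aa)$ is a bounded complex of free $A$-modules concentrated in degrees $0, \ldots, n$.

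Finite dimensionality is then immediate. Since $\opn{Tel}(A; \aa)$ is a bounded complex of flat modules in degrees $[0, n]$, the cohomological displacement of $\opn{Tel}(A; \aa) \ot_A (-)$ is contained in $[0, n]$, so $\mrm{R}\Ga_{\a}$ has cohomological dimension at most $n$. By Proposition \ref{prop:1160}, the right cohomological dimension of $\Ga_{\a}$ itself is at most $n$. Quasi-compactness is equally immediate: arbitrary direct sums commute with both tensor products and with cohomology of complexes, so $\mrm{R}^q \Ga_{\a}(-) = \opn{H}^q(\opn{Tel}(A; \aa) \ot_A -)$ commutes with direct sums for every $q$.

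There is no genuine obstacle here: the nontrivial content is entirely contained in Theorem \ref{thm:47} (already established) and in the telescope representation of $\mrm{R}\Ga_{\a}$ (a known consequence of weak proregularity, cited in the excerpt). Everything else is routine bookkeeping with cohomological displacements and with direct sums. A slight variant would replace $\opn{Tel}(A; \aa)$ by $\opn{K}^{\vee}_{\infty}(A; \aa)$, which is also a bounded K-flat complex in degrees $[0, n]$ and yields the same conclusions.
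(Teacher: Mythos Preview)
Your proof is correct and follows essentially the same approach as the paper: both pick a finite generating sequence $\aa$, use Theorem \ref{thm:47} to conclude that $\aa$ is weakly proregular, and then invoke a bounded K-flat representative of $\mrm{R}\Ga_{\a}$ to read off quasi-compactness and finite dimension. The only cosmetic difference is that the paper uses $\opn{K}^{\vee}_{\infty}(A; \aa)$ (citing \cite[Corollary 3.26]{PSY1}) rather than $\opn{Tel}(A; \aa)$, a variant you already note works identically.
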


\begin{proof}
Let $\aa$ be a finite sequence, say of length $n$, that generates the ideal 
$\a$. By the theorem we know that the sequence $\aa$ is weakly proregular. 
According to \cite[Corollary 3.26]{PSY1} there is an isomorphism of 
triangulated 
functors 
\[ \mrm{R} \Ga_{\a} \cong \opn{K}^{\vee}_{\infty}(A; \aa) \ot_A (-) . \]
This shows that the functor $\mrm{R} \Ga_{\a}$ is quasi-compact, and that its 
cohomological dimension is at most $n$. Since the triangulated functor 
$\mrm{R} \Ga_{\a}$ is quasi-compact, then so are the functors
$\mrm{R}^q \Ga_{\a}$. And $\mrm{R}^q \Ga_{\a} = 0$ for $q > n$.
\end{proof}

\begin{rem} \label{rem:1116}
Corollary \ref{cor:1320} could be false for a torsion class $\catt{T}$ that is 
not defined by a finitely generated ideal. 
Indeed, in \cite[Example 4.12]{Vy} there is a weakly stable torsion class 
which is not quasi-compact (but is finite dimensional).
\end{rem}

\section{Derived Categories of Bimodules}
\label{sec:bimods}

In this section we explain how to form derived tensor and Hom functors 
between derived categories of bimodules over noncommutative rings, and study 
some of their properties. We do this in a somewhat narrow setting: we work over 
a commutative base ring $\K$, and some of the rings are assumed to be flat over 
it. See Remark \ref{rem:1420} regarding the use of DG rings to remove the 
flatness assumption. 

From now on in the paper we adopt the following convention. 

\begin{conv} \label{conv:1425}
There is a fixed nonzero commutative base ring $\K$. 
All additive operations and categories are by default 
$\K$-linear. All rings and bimodules are central over $\K$. We use the 
abbreviation $\ot$ for $\ot_{\K}$. By default all modules are left 
modules. We say that $A$ is a {\em flat ring} if $A$ is flat as a $\K$-module.
\end{conv}

\begin{exa} \label{exa:1115}
If $\K$ is a field, then every central $\K$-ring $A$ is flat.
If $\K = \Z$, then any ring $A$ is $\K$-central, and $A$ is flat if and only if 
it is torsion free (as an abelian group). 
\end{exa}

Let $A$ and $B$ be rings. Recall that the {\em opposite} ring of $A$ is the 
ring $A^{\mrm{op}}$, that has the same underlying $\K$-module structure as $A$, 
but the multiplication is 
\[ a_1 \cd^{\mrm{op}} \, a_2 := a_2 \cd a_1 \]
for $a_1, a_2 \in A$.
The tensor product $A \ot B$ is a ring with the usual multiplication 
\[ (a_1 \ot b_1) \cd (a_2 \ot b_2) :=
(a_1 \cd a_2) \ot (b_1 \cd b_2) \]
for $a_k \in A$ and $b_k \in B$.  The {\em enveloping} ring of $A$ is the ring 
$A^{\mrm{en}} := A \ot A^{\mrm{op}}$. 
We identify right $B$-modules with left $B^{\mrm{op}}$-modules, and 
$A$-$B$-bimodules with left modules over $A \ot B^{\mrm{op}}$. 

Given $M \in \cat{M}(A^{\mrm{op}})$ and 
$N \in \cat{M}(A)$, the tensor product $M \ot_A N$ is defined; and it is a 
$\K$-module. Note that $(A^{\mrm{op}})^{\mrm{op}} = A$. It is a bit 
confusing, but actually easy to check, that there is a canonical isomorphism
\begin{equation} \label{eqn:1236} 
N \ot_{A^{\mrm{op}}} M \cong M \ot_A N
\end{equation}
in $\cat{M}(\K)$, as quotients of $N \ot M \cong M \ot N$. 

A homomorphism of rings $f : A \to B$ induces a forgetful functor
\[ \opn{Rest}_f : \cat{M}(B) \to \cat{M}(A) \]
that we call {\em restriction}, which is exact. The restriction functor extends 
to complexes, and to a triangulated functor  
\begin{equation} \label{eqn:1030}
\opn{Rest}_f : \cat{D}(B) \to \cat{D}(A) .
\end{equation}

\begin{prop} \label{prop:1071}
Let $f : A \to B$ be a ring homomorphism.
The functor $\opn{Rest}_f$ is conservative. Namely a morphism 
$\phi : M \to N$ in $\cat{D}(B)$ is an isomorphism if and only if the morphism 
\[ \opn{Rest}_f(\phi) : \opn{Rest}_f(M) \to \opn{Rest}_f(N) \]
in  $\cat{D}(A)$ is an isomorphism.
\end{prop}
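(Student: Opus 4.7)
The plan is to reduce the statement to the well-known fact that isomorphisms in the derived category are detected by cohomology, combined with the observation that restriction of scalars is an exact and conservative functor between the underlying abelian module categories. The ``only if'' direction is immediate: $\opn{Rest}_f$ is a triangulated functor, and any functor preserves isomorphisms.

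For the ``if'' direction, I would first recall that a morphism $\phi : M \to N$ in $\cat{D}(B)$ is an isomorphism if and only if, for every $i \in \Z$, the induced map
\[ \opn{H}^i(\phi) : \opn{H}^i(M) \to \opn{H}^i(N) \]
is an isomorphism in $\cat{M}(B)$. The same characterization holds in $\cat{D}(A)$ with $\cat{M}(A)$ in place of $\cat{M}(B)$. Next, I would observe that the restriction functor $\opn{Rest}_f : \cat{M}(B) \to \cat{M}(A)$ is exact and commutes with the formation of cohomology of complexes, so there are canonical identifications $\opn{H}^i(\opn{Rest}_f(M)) = \opn{Rest}_f(\opn{H}^i(M))$ as $A$-modules, and likewise for $N$, with $\opn{H}^i(\opn{Rest}_f(\phi)) = \opn{Rest}_f(\opn{H}^i(\phi))$.

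The last ingredient is that $\opn{Rest}_f : \cat{M}(B) \to \cat{M}(A)$ is itself conservative on the abelian level: a $B$-linear map whose underlying $A$-linear map is bijective is automatically a $B$-module isomorphism, since the inverse map, being set-theoretic, is automatically $B$-linear. Putting these pieces together, if $\opn{Rest}_f(\phi)$ is an isomorphism in $\cat{D}(A)$, then each $\opn{Rest}_f(\opn{H}^i(\phi))$ is an isomorphism in $\cat{M}(A)$, hence each $\opn{H}^i(\phi)$ is an isomorphism in $\cat{M}(B)$, and therefore $\phi$ is an isomorphism in $\cat{D}(B)$.

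There is no real obstacle here; the only point that needs a line of care is making explicit that exact functors between abelian categories commute with cohomology of complexes (so that the cohomological criterion for being a quasi-isomorphism transports along $\opn{Rest}_f$), and that restriction of scalars along a ring homomorphism reflects isomorphisms of modules.
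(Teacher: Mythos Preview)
Your proof is correct and follows essentially the same approach as the paper: both reduce to the cohomological criterion for isomorphisms in the derived category and use that $\opn{H}^p(\phi)$ and $\opn{H}^p(\opn{Rest}_f(\phi))$ coincide as maps of underlying sets (the paper phrases this as equality of $\K$-module homomorphisms, which amounts to your observation that restriction is exact and reflects isomorphisms).
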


\begin{proof}
The morphism $\phi$ is an isomorphism in $\cat{D}(B)$ if and only if 
the $\K$-module homomorphisms 
$\opn{H}^p(\phi) : \opn{H}^p(M) \to \opn{H}^p(N)$ are isomorphisms for all $p$. 
But $\opn{H}^p(\phi) = \opn{H}^p(\opn{Rest}_f(\phi))$ 
as $\K$-module homomorphisms.
\end{proof}

It is well-known (see \cite{Sp}, \cite{BN}, \cite{Ke}, \cite[Chapter 
09JD]{SP} or \cite[Section 11]{Ye6}) 
that every complex $M \in \cat{C}(A)$ admits a quasi-isomorphism
$P \to M$, where $P$ is a K-projective complex, each $P^i$ is a projective 
$A$-module, and $\opn{sup}(P) =  \opn{sup}(\opn{H}(M))$.
If $P$ is a K-projective complex, then it is K-flat. 

Given rings $A$ and $B$, there is a canonical ring homomorphism
$A \to A \ot B$, that sends $a \mapsto a \ot 1_B$. The corresponding forgetful 
functor is denoted by 
\begin{equation} \label{eqn:1031}
\opn{Rest}_A : \cat{D}(A \ot B) \to \cat{D}(A) . 
\end{equation}

\begin{dfn} \label{dfn:1130}
Let $A$ and $B$ be rings, and let 
$M \in \cat{C}(A \ot B)$. If $\opn{Rest}_A(M) \in \cat{C}(A)$ 
is K-flat (resp.\  K-injective, resp.\ K-projective), then we say that $M$ is 
{\em K-flat} (resp.\ {\em K-injective}, resp.\ {\em K-projective})  
{\em over $A$}.
\end{dfn}

\begin{lem} \label{lem:1070}
Let $A$ and $B$ be rings, and assume $B$ is flat.
\begin{enumerate}
\item If $P \in \cat{C}(A \ot B)$ is K-flat, then $P$ is K-flat over $A$.

\item  If $I \in \cat{C}(A \ot B)$ is K-injective, then $I$ is K-injective over 
$A$.
\end{enumerate}
\end{lem}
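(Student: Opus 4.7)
The plan for both parts is symmetric: lift an acyclic test complex over $A$ (or $A^{\mrm{op}}$) to an acyclic complex over $A \ot B$ by tensoring with $B$, apply the K-flatness or K-injectivity of the given complex over $A \ot B$, and then descend back to $\cat{C}(A)$ via a natural base-change (respectively, tensor-hom adjunction) isomorphism. The role of the $\K$-flatness of $B$ is precisely to guarantee that the lifted complex $N \ot B$ remains acyclic.

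For part (1), I would take an acyclic $N \in \cat{C}(A^{\mrm{op}})$, equip $N \ot B$ with its natural right $A \ot B$-module structure (coming from the identification $N \ot B \cong N \ot_A (A \ot B)$), and invoke the base-change isomorphism of complexes of $\K$-modules
\[ (N \ot B) \ot_{A \ot B} P \cong N \ot_A \opn{Rest}_A(P) . \]
Flatness of $B$ makes $N \ot B$ acyclic, K-flatness of $P$ over $A \ot B$ makes the left-hand side acyclic, and hence the right-hand side is acyclic. This is exactly the condition needed for $\opn{Rest}_A(P)$ to be K-flat over $A$.

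For part (2), dually, take an acyclic $N \in \cat{C}(A)$, form the left $A \ot B$-module $N \ot B \cong (A \ot B) \ot_A N$, and use the tensor-hom adjunction
\[ \opn{Hom}_{A \ot B}(N \ot B, I) \cong \opn{Hom}_A(N, \opn{Rest}_A(I)) . \]
K-injectivity of $I$ together with acyclicity of $N \ot B$ forces the left-hand side, and hence the right-hand side, to be acyclic, which is what K-injectivity of $\opn{Rest}_A(I)$ over $A$ requires.

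The arguments are essentially formal once the two natural isomorphisms are in hand. The only care required is in tracking left-versus-right module structures over the non-symmetric ring $A \ot B$ (in particular, making sure the lifted complex $N \ot B$ is given the correct side-ness to pair with $P$ or $I$), and in verifying that the module-level isomorphisms extend naturally to complexes. Without the $\K$-flatness of $B$, the complex $N \ot B$ would not in general be acyclic when $N$ is, and the entire strategy would collapse; this is the only place flatness enters, but it is indispensable.
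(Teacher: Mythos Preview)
Your proposal is correct and takes essentially the same approach as the paper: the paper's proof simply records the two identities $M \ot_{A} P \cong (M \ot B) \ot_{A \ot B} P$ and $\opn{Hom}_A(N, I) \cong \opn{Hom}_{A \ot B}(N \ot B, I)$ and declares the lemma a direct consequence. You have spelled out in full the argument that the paper leaves implicit (test against an acyclic complex, lift via $-\ot B$, apply the hypothesis, descend via the identity), including the observation that $\K$-flatness of $B$ is exactly what makes $N \ot B$ remain acyclic.
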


\begin{proof}
These are direct consequences of the identities 
\[ M \ot_{A} P \cong (M \ot B) \ot_{A \ot B} P \]
and
\[ \opn{Hom}_A(N, I) \cong  \opn{Hom}_{A \ot B}(N \ot B, I)  \]
for $M \in \cat{M}(A^{\mrm{op}})$ and 
$N \in \cat{M}(A)$. 
\end{proof}

\begin{rem} \label{rem:1130}
If $B$ happens to be a projective module over $\K$, then the restriction to $A$ 
of a K-projective complex in $\cat{C}(A \ot B)$ is K-projective in 
$\cat{C}(A)$. The proof is similar. We will not need this fact here. 
\end{rem}

Left and right derived bifunctors are studied in detail in 
\cite[Section 9]{Ye6}. 

\begin{prop} \label{prop:1042}
Let $A$, $B$ and $C$ be rings, and assume $C$ is flat.
\begin{enumerate}
\item The bifunctor 
\[ (- \ot_{B} -) :  \cat{M}(A \ot B^{\mrm{op}}) \times 
\cat{M}(B \ot C^{\mrm{op}}) \to \cat{M}(A \ot C^{\mrm{op}})  \]
has a left derived bifunctor 
\[ (- \ot^{\mrm{L}}_{B} -) :  \cat{D}(A \ot B^{\mrm{op}}) \times 
\cat{D}(B \ot C^{\mrm{op}}) \to \cat{D}(A \ot C^{\mrm{op}}) . \]

\item Given $M \in \cat{C}(A \ot B^{\mrm{op}})$
and 
$N \in \cat{C}(B \ot C^{\mrm{op}})$,
such that $M$ is K-flat over $B^{\mrm{op}}$ or $N$ is K-flat over $B$, the 
morphism 
\[ \xi^{\mrm{L}}_{M, N} : M \ot_B^{\mrm{L}} N \to M \ot_{B} N \]
in $\cat{D}(A \ot C^{\mrm{op}})$ is an isomorphism. 

\item Suppose we are given ring homomorphisms 
$A' \to A$ and $C' \to C$, such that $C'$ is flat. Then the diagram 
\[ \UseTips \xymatrix @C=8ex @R=6ex {
\cat{D}(A \ot B^{\mrm{op}}) \times \cat{D}(B \ot C^{\mrm{op}})
\ar[r]^(0.6){ (- \ot^{\mrm{L}}_{B} -) }
\ar[d]_{\opn{Rest} \times \opn{Rest}}
&
\cat{D}(A \ot C^{\mrm{op}})
\ar[d]^{\opn{Rest}}
\\
\cat{D}(A' \ot B^{\mrm{op}}) \times \cat{D}(B \ot C'^{\, \mrm{op}})
\ar[r]^(0.6){ (- \ot^{\mrm{L}}_{B} -) }
&
\cat{D}(A' \ot C'^{\, \mrm{op}})
} \]
is commutative up to an isomorphism of triangulated bifunctors. 

\item Suppose $D$ is another flat ring. Then there is an isomorphism  
\[ (- \ot^{\mrm{L}}_{B} -) \ot^{\mrm{L}}_{C} (-) \cong 
(-) \ot^{\mrm{L}}_{B} (- \ot^{\mrm{L}}_{C} -) \]
of triangulated trifunctors 
\[ \cat{D}(A \ot B^{\mrm{op}}) \times \cat{D}(B \ot C^{\mrm{op}}) \times 
\cat{D}(C \ot D^{\mrm{op}}) \to \cat{D}(A \ot D^{\mrm{op}}) . \]
\end{enumerate}
\end{prop}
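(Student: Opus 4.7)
The plan is to build the left derived bifunctor in (1) by resolving the second variable via K-projective (hence K-flat) complexes, using Lemma \ref{lem:1070}(1) together with the flatness of $C$ to transfer K-flatness across bimodule factorizations.

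For (1), given $M \in \cat{D}(A \ot B^{\mrm{op}})$ and $N \in \cat{D}(B \ot C^{\mrm{op}})$, I choose a K-projective resolution $\phi : P \to N$ in $\cat{C}(B \ot C^{\mrm{op}})$. Since $C^{\mrm{op}}$ is flat over $\K$, Lemma \ref{lem:1070}(1) (with $(B, C^{\mrm{op}})$ playing the role of $(A, B)$) ensures $P$ is K-flat over $B$. Set $M \ot_B^{\mrm{L}} N := M \ot_B P$ in $\cat{C}(A \ot C^{\mrm{op}})$, with $\xi^{\mrm{L}}_{M, N}$ induced by $\phi$. The standard argument (tensoring a quasi-isomorphism between K-flat complexes with any complex yields a quasi-isomorphism) shows this definition is independent of the chosen $P$ and functorial in both variables, giving a triangulated bifunctor. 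Part (2) is then immediate: if $N$ is K-flat over $B$ we may take $P = N$, so that $\xi^{\mrm{L}}_{M, N}$ is the identity; and if $M$ is K-flat over $B^{\mrm{op}}$, we invoke the canonical swap (\ref{eqn:1236}) and resolve symmetrically from the opposite side.

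For (3), the key observation is that the underlying $\K$-module of $M \ot_B P$ is unchanged by restriction along $A' \to A$ and $C' \to C$. Moreover, K-flatness of $P$ over $B$ persists after restricting the $C^{\mrm{op}}$-factor to $C'^{\,\mrm{op}}$, since K-flatness is tested on the $B$-side only. Hence the same complex $M \ot_B P$ represents both composites in the diagram, yielding the desired natural isomorphism.

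For (4), I choose K-projective resolutions $P \to N$ in $\cat{C}(B \ot C^{\mrm{op}})$ and $Q \to L$ in $\cat{C}(C \ot D^{\mrm{op}})$. By Lemma \ref{lem:1070}(1), using the flatness of $C$ and $D$, the complex $P$ is K-flat over $B$ and $Q$ is K-flat over $C$. The strict associativity isomorphism
\[ (M \ot_B P) \ot_C Q \iso M \ot_B (P \ot_C Q) \]
in $\cat{C}(A \ot D^{\mrm{op}})$ then descends to the claimed isomorphism: the left side models $(M \ot_B^{\mrm{L}} N) \ot_C^{\mrm{L}} L$ since $Q$ is K-flat over $C$; for the right side, $P \ot_C Q$ models $N \ot_C^{\mrm{L}} L$ (again by K-flatness of $Q$ over $C$), and is itself K-flat over $B$ because for any $X \in \cat{C}(B^{\mrm{op}})$ the natural isomorphism $X \ot_B (P \ot_C Q) \cong (X \ot_B P) \ot_C Q$ successively preserves quasi-isomorphisms via K-flatness of $P$ over $B$ and of $Q$ over $C$. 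The main obstacle is this compound bookkeeping in (4) — verifying that K-flatness is carried to the correct ring at each tensoring step, and checking that the strict associator is natural enough in all three variables to descend to a well-defined isomorphism of triangulated trifunctors. The flatness hypotheses on $C$ and $D$ are used precisely to effect this transfer of K-flatness via Lemma \ref{lem:1070}.
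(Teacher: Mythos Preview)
Your proposal is correct and takes essentially the same approach as the paper, which also resolves the second variable by a complex K-flat over $B$ (via Lemma \ref{lem:1070}(1)) and, for (4), checks that the tensor of the two resolutions remains K-flat over $B$ by exactly the associativity argument you spell out. One small simplification for (2): when $M$ is K-flat over $B^{\mrm{op}}$, the paper argues directly that $\opn{id}_M \ot_B \zeta_N$ is a quasi-isomorphism (since $M \ot_B (-) \cong (-) \ot_{B^{\mrm{op}}} M$ preserves quasi-isomorphisms via (\ref{eqn:1236})), which avoids having to build a second presentation of the derived bifunctor from the left and then compare it to the one you already constructed.
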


Note that item (3) includes the cases $A' = \K$ and $C' = \K$. 

\begin{proof}
(1) The bifunctor $(- \ot_{B} -)$ induces a triangulated bifunctor 
\[ (- \ot_{B} -) :  \cat{K}(A \ot B^{\mrm{op}}) \times 
\cat{K}(B \ot C^{\mrm{op}}) \to \cat{K}(A \ot C^{\mrm{op}})  \]
on the homotopy categories, in the obvious way. 
By Lemma \ref{lem:1070}(1), every complex 
$N \in \cat{C}(B \ot C^{\mrm{op}})$
admits a quasi-isomorphism 
$\ze_N : \til{N} \to N$, where 
$\til{N} \in \cat{C}(B \ot C^{\mrm{op}})$
is K-flat over $B$ (e.g.\ we can take $\til{N}$ to be K-projective in 
$\cat{C}(B \ot C^{\mrm{op}})$). Let us define the object
\[ M \ot^{\mrm{L}}_{B} N := M \ot_{B} \til{N} \in
\cat{D}(A \ot C^{\mrm{op}}) , \]
with the morphism 
\[ \xi^{\mrm{L}}_{M, N} := \opn{id}_M \ot_B \, \ze_N : 
M \ot_B^{\mrm{L}} N \to M \ot_{B} N . \]
The pair 
$\bigl( (- \ot^{\mrm{L}}_{B} -), \xi^{\mrm{L}} \bigr)$
is a left derived bifunctor of $(- \ot_{B} -)$. 

\medskip \noindent 
(2) Under either assumption the homomorphism 
$\opn{id}_M \ot_B \, \ze_N$
is a quasi-isomorphism.

\medskip \noindent 
(3) The resolutions $\ze_N : \til{N} \to N$ from item (1) become 
resolutions 
\[ \opn{Rest}(\ze_N) : \opn{Rest}(\til{N}) \to \opn{Rest}(N) \]
in $\cat{C}(B \ot C'^{\, \mrm{op}})$
that are K-flat over $B$. And for $M \in \cat{C}(A \ot B^{\mrm{op}})$ there is 
an obvious isomorphism 
\[ \opn{Rest}(M) \ot_B \opn{Rest}(\til{N}) \cong \opn{Rest}(M \ot_B \til{N}) \]
in $\cat{C}(A' \ot C'^{\, \mrm{op}})$.
 
\medskip \noindent 
(4) Given $M, N, \til{N}$ as above and 
$P \in \cat{C}(C \ot D^{\mrm{op}})$, 
we choose a quasi-isomorphism 
$\ze_P : \til{P} \to P$, where 
$\til{P} \in \cat{C}(C \ot D^{\mrm{op}})$
is K-flat over $C$. A small calculation shows that 
$\til{N} \ot_C \til{P}$ is K-flat over $B$. The desired isomorphism 
\[ (M \ot^{\mrm{L}}_{B} N) \ot^{\mrm{L}}_{C} P \cong  
M \ot^{\mrm{L}}_{B} (N \ot^{\mrm{L}}_{C} P) \]
in $\cat{D}(A \ot D^{\mrm{op}})$
comes from the obvious isomorphism 
\[ (M \ot_{B} \til{N}) \ot_{C} \til{P} \cong  
M \ot_{B} (\til{N} \ot_{C} \til{P}) \]
in $\cat{C}(A \ot D^{\mrm{op}})$.
\end{proof}

A matter of notation: suppose we are given morphisms $\phi : M' \to M$ in \lb 
$\cat{D}(A \ot B^{\mrm{op}})$
and $\psi : N' \to N$ in $\cat{D}(B \ot C^{\mrm{op}})$.
The result of applying the bifunctor $(- \ot_B^{\mrm{L}} -)$ is the morphism
\begin{equation} \label{eqn:1420}
 \phi \ot^{\mrm{L}}_B \psi : M' \ot_B^{\mrm{L}} N' \to M \ot_B^{\mrm{L}} N
\end{equation}
in $\cat{D}(A \ot C^{\mrm{op}})$. 

\begin{prop} \label{prop:1031}
Let $A$, $B$ and $C$ be rings, and assume $C$ is flat. 
\begin{enumerate}
\item The bifunctor 
\[ \opn{Hom}_B(-, -) :  \cat{M}(B \ot A^{\mrm{op}})^{\mrm{op}} \times 
\cat{M}(B \ot C^{\mrm{op}}) \to \cat{M}(A \ot C^{\mrm{op}})  \]
has a right derived bifunctor 
\[ \opn{RHom}_B(-, -) :  \cat{D}(B \ot A^{\mrm{op}})^{\mrm{op}} \times 
\cat{D}(B \ot C^{\mrm{op}}) \to \cat{D}(A \ot C^{\mrm{op}}) . \]

\item Given $M \in \cat{C}(B \ot A^{\mrm{op}})$
and $N \in \cat{C}(B \ot C^{\mrm{op}})$,
such that either $M$ is K-projective over $B$ or $N$ is K-injective over $B$, 
the morphism 
\[ \xi^{\mrm{R}}_{M, N} : \opn{Hom}_B(M, N) \to \opn{RHom}_B(M, N) \]
in $\cat{D}(A \ot C^{\mrm{op}})$ is an isomorphism. 

\item Suppose we are given ring homomorphisms 
$A' \to A$ and $C' \to C$, such that $C'$ is K-flat. Then the diagram 
\[ \UseTips \xymatrix @C=14ex @R=6ex {
\cat{D}(B \ot A^{\mrm{op}})^{\mrm{op}} \times \cat{D}(B \ot C^{\mrm{op}})
\ar[r]^(0.6){ \opn{RHom}_B(-, -) }
\ar[d]_{\opn{Rest} \times \opn{Rest}}
&
\cat{D}(A \ot C^{\mrm{op}})
\ar[d]^{\opn{Rest}}
\\
\cat{D}(B \ot A'^{\, \mrm{op}})^{\mrm{op}} \times 
\cat{D}(B \ot C'^{\, \mrm{op}})
\ar[r]^(0.6){ \opn{RHom}_{B}(-, -) }
&
\cat{D}(A' \ot C'^{\, \mrm{op}})
} \]
is commutative up to an isomorphism of triangulated bifunctors. 

\item Suppose $D$ is another flat ring. Then for 
$M \in \cat{D}(B \ot A^{\mrm{op}})$,
$N \in \cat{D}(C \ot B^{\mrm{op}})$ and 
$L \in \cat{D}(C \ot D^{\mrm{op}})$
there is an isomorphism 
\[ \opn{RHom}_B(M, \opn{RHom}_C(N, L)) \cong 
\opn{RHom}_C(N \ot_B^{\mrm{L}} M, L) \]
in $\cat{D}(A \ot D^{\mrm{op}})$. 
This isomorphism is functorial in the objects $M, N, L$.
\end{enumerate}

\end{prop}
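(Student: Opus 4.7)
The plan is to mirror the proof of Proposition~\ref{prop:1042}, exchanging K-flat resolutions for K-injective resolutions, and to apply Lemma~\ref{lem:1070}(2) (which requires flatness of $C$) at every point where restriction must preserve K-injectivity over $B$.

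For items (1), (2) and (3), I resolve the second variable. Given $N \in \cat{C}(B \ot C^{\mrm{op}})$, choose a quasi-isomorphism $\eta_N : N \to I_N$ into a complex $I_N$ that is K-injective in $\cat{C}(B \ot C^{\mrm{op}})$. By Lemma~\ref{lem:1070}(2), applied with $C^{\mrm{op}}$ in the role of the flat factor, $I_N$ is K-injective over $B$; hence the functor $\opn{Hom}_B(-, I_N)$ sends quasi-isomorphisms in the first argument to quasi-isomorphisms, and so descends to a functor on derived categories. Set $\opn{RHom}_B(M, N) := \opn{Hom}_B(M, I_N)$ with $\xi^{\mrm{R}}_{M, N} := \opn{Hom}_B(\opn{id}_M, \eta_N)$; one checks in the usual way that this is a right derived bifunctor. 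Item (2) then follows from the standard fact that either K-projectivity of $M$ over $B$ or K-injectivity of $N$ over $B$ (again extracted from Lemma~\ref{lem:1070}(2)) suffices to make $\xi^{\mrm{R}}_{M, N}$ a quasi-isomorphism. For item (3), Lemma~\ref{lem:1070}(2) also ensures that $\opn{Rest}(I_N)$ remains K-injective over $B$ in $\cat{C}(B \ot C'^{\, \mrm{op}})$, so the same complex $\opn{Hom}_B(M, I_N)$, now restricted to $\cat{C}(A' \ot C'^{\, \mrm{op}})$, computes both sides of the required commuting square.

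For item (4), I combine resolutions on two sides. Pick a K-flat resolution $\til{N} \to N$ in $\cat{C}(C \ot B^{\mrm{op}})$ (taking $\til{N}$ K-projective there), which by Lemma~\ref{lem:1070}(1) is K-flat over $B^{\mrm{op}}$ since $C$ is flat; and a K-injective resolution $L \to I_L$ in $\cat{C}(C \ot D^{\mrm{op}})$, which by Lemma~\ref{lem:1070}(2) is K-injective over $C$ since $D$ is flat. The main obstacle is to verify that $J := \opn{Hom}_C(\til{N}, I_L) \in \cat{C}(B \ot D^{\mrm{op}})$ is K-injective over $B$: for any acyclic $K \in \cat{C}(B)$ the classical Hom-tensor adjunction gives $\opn{Hom}_B(K, J) \cong \opn{Hom}_C(\til{N} \ot_B K, I_L)$, and the right-hand side is acyclic because $\til{N} \ot_B K$ is acyclic (K-flatness of $\til{N}$ over $B^{\mrm{op}}$) and $I_L$ is K-injective over $C$. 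Once this is granted, $\opn{Hom}_B(M, J)$ computes $\opn{RHom}_B(M, \opn{RHom}_C(N, L))$, while $\opn{Hom}_C(\til{N} \ot_B M, I_L)$ computes $\opn{RHom}_C(N \ot_B^{\mrm{L}} M, L)$, and the non-derived adjunction isomorphism $\opn{Hom}_C(\til{N} \ot_B M, I_L) \cong \opn{Hom}_B(M, J)$ in $\cat{C}(A \ot D^{\mrm{op}})$, natural in $M$, $\til{N}$ and $I_L$, descends to the asserted natural isomorphism in $\cat{D}(A \ot D^{\mrm{op}})$.
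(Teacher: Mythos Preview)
Your proposal is correct and follows essentially the same approach as the paper's proof: for items (1)--(3) the paper says ``this is like the proof of Proposition~\ref{prop:1042}, but now we rely on Lemma~\ref{lem:1070}(2)'', and for item (4) it chooses exactly the resolutions you describe and invokes the same Hom-tensor adjunction. Your write-up is in fact more explicit than the paper's, since you spell out the verification that $\opn{Hom}_C(\til{N}, I_L)$ is K-injective over $B$, which the paper leaves as ``a calculation shows''.
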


\begin{proof}
(1-3) This is like the proof of Proposition \ref{prop:1042}, but now we rely on 
Lemma \ref{lem:1070}(2) and \cite[Proposition 2.6(2)]{Ye2}.

\medskip \noindent 
(4) Here we choose a quasi-isomorphism $L \to J$ in 
$\cat{C}(C \ot D^{\mrm{op}})$ into a complex $J$ that is K-injective over 
$C$, and a quasi-isomorphism $\til{N} \to N$ in 
$\cat{C}(C \ot B^{\mrm{op}})$ from a complex $\til{N}$ that is K-flat 
over $B^{\mrm{op}}$. A calculation shows that 
$\opn{Hom}_C(\til{N}, J)$ is K-injective over $B$. 
The desired isomorphism comes from the obvious adjunction isomorphism 
\[ \opn{Hom}_B(M, \opn{Hom}_C(\til{N}, J)) \cong 
\opn{Hom}_C(\til{N} \ot_B M, J) \]
in $\cat{C}(A \ot D^{\mrm{op}})$.         
\end{proof}

Suppose we are given morphisms $\phi : M \to M'$ in 
$\cat{D}(B \ot A^{\mrm{op}})$
and $\psi : N' \to N$ in $\cat{D}(B \ot C^{\mrm{op}})$.
The result of applying the bifunctor $\opn{RHom}_B(-, -)$
is the morphism
\begin{equation} \label{eqn:1425}
\opn{RHom}_B(\phi, \psi) : \opn{RHom}_B(M', N') \to \opn{RHom}_B(M, N) 
\end{equation}
in $\cat{D}(A \ot C^{\mrm{op}})$.

If $A$ and $B$ are flat rings, then according to Propositions \ref{prop:1042}
and \ref{prop:1031} we obtain triangulated bifunctors 
\begin{equation} \label{eqn:1426}
(- \ot^{\mrm{L}}_{A} -) : \cat{D}(A^{\mrm{en}}) \times 
\cat{D}(A^{\mrm{en}}) \to \cat{D}(A^{\mrm{en}}) ,
\end{equation}
\begin{equation} \label{eqn:1427}
(- \ot^{\mrm{L}}_{A} -) : \cat{D}(A^{\mrm{en}}) \times 
\cat{D}(A \ot B) \to \cat{D}(A \ot B) 
\end{equation}
and
\begin{equation} \label{eqn:1435}
\opn{RHom}_A(-, -) : \cat{D}(A^{\mrm{en}})^{\mrm{op}} \times 
\cat{D}(A \ot B) \to \cat{D}(A \ot B) .
\end{equation}

{\em Monoidal categories} are defined in \cite[Section XI.1]{Mc}.  

\begin{prop} \label{prop:1425}
Suppose $A$ and $B$ are flat rings.
\begin{enumerate}
\item The operation \tup{(\ref{eqn:1426})} is a monoidal structure on the 
category $\cat{D}(A^{\mrm{en}})$, with unit object $A$. 

\item The operation \tup{(\ref{eqn:1427})} is a left monoidal action of the 
monoidal category $\cat{D}(A^{\mrm{en}})$ on the category 
$\cat{D}(A \ot B)$.

\item The operation \tup{(\ref{eqn:1435})} is a right monoidal action of the 
monoidal category $\cat{D}(A^{\mrm{en}})$ on the category 
$\cat{D}(A \ot B)$.
\end{enumerate}
\end{prop}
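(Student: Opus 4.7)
The plan is to deduce all three claims from the bifunctor packages already assembled in Propositions \ref{prop:1042} and \ref{prop:1031}, specialized to the rings at hand. For each of the three operations one needs to exhibit an associator, a unitor, and (in parts (2) and (3)) the compatibility with the monoidal structure on $\cat{D}(A^{\mrm{en}})$, and then verify the pentagon and triangle coherence axioms.

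For part (1), the bifunctor \tup{(\ref{eqn:1426})} is Proposition \ref{prop:1042}(1) with the roles of $A$, $B$, $C$ there all played by the flat ring $A$. The associator $(M \ot_A^{\mrm{L}} N) \ot_A^{\mrm{L}} P \iso M \ot_A^{\mrm{L}} (N \ot_A^{\mrm{L}} P)$ for $M, N, P \in \cat{D}(A^{\mrm{en}})$ is the same specialization of Proposition \ref{prop:1042}(4). The left and right unitors come from Proposition \ref{prop:1042}(2): the object $A$, viewed as a complex in $\cat{M}(A^{\mrm{en}})$ concentrated in degree $0$, is K-flat over both $A$ and $A^{\mrm{op}}$, so there are canonical isomorphisms $A \ot_A^{\mrm{L}} M \cong A \ot_A M \cong M$ and $M \ot_A^{\mrm{L}} A \cong M$.

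For parts (2) and (3) analogous specializations apply. The bifunctor \tup{(\ref{eqn:1427})} is Proposition \ref{prop:1042}(1) taking the third ring to be $B^{\mrm{op}}$, which is flat because $B$ is. The action associator $(M \ot_A^{\mrm{L}} N) \ot_A^{\mrm{L}} L \iso M \ot_A^{\mrm{L}} (N \ot_A^{\mrm{L}} L)$ for $M, N \in \cat{D}(A^{\mrm{en}})$ and $L \in \cat{D}(A \ot B)$ and the unitor $A \ot_A^{\mrm{L}} L \iso L$ come from Proposition \ref{prop:1042}(4) and (2). For part (3), the bifunctor \tup{(\ref{eqn:1435})} is Proposition \ref{prop:1031}(1), and Proposition \ref{prop:1031}(4) specialized to $B = C = A$ and $D = B^{\mrm{op}}$ yields
\[ \opn{RHom}_A(M, \opn{RHom}_A(N, L)) \cong \opn{RHom}_A(N \ot_A^{\mrm{L}} M, L) , \]
which, once the contravariance of $\opn{RHom}$ is read as a right action of $(\cat{D}(A^{\mrm{en}}), \ot_A^{\mrm{L}})$, is precisely the associator for a right monoidal action. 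The unitor $\opn{RHom}_A(A, L) \iso L$ follows from Proposition \ref{prop:1031}(2), since $A$ is K-projective over itself.

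The main obstacle is verifying the pentagon and triangle coherence axioms at the derived level, together with the compatibility of the actions in (2) and (3) with the monoidal associator from (1). The strategy is to reduce each coherence diagram to its strict counterpart on complexes: the underived bifunctors $(- \ot_A -)$ on $\cat{M}(A^{\mrm{en}})$ and the underived $\opn{Hom}_A$ satisfy the coherences strictly in $\cat{C}(A^{\mrm{en}})$. Choosing functorial K-flat or K-injective resolutions whose bimodule structure is controlled by Lemma \ref{lem:1070}, one translates each derived coherence diagram into the corresponding strict one evaluated on such representatives, so the diagrams commute. No calculation beyond bookkeeping of resolutions across the isomorphisms of Propositions \ref{prop:1042} and \ref{prop:1031} is required.
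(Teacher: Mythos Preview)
Your proposal is correct and follows essentially the same route as the paper: both reduce the coherence axioms to their strict counterparts for the ordinary tensor and Hom bifunctors by evaluating on K-flat (resp.\ K-injective) representatives, using Lemma \ref{lem:1070} to control the bimodule structure of such resolutions. The paper's proof is more terse, but your explicit identification of the associators and unitors via Propositions \ref{prop:1042} and \ref{prop:1031} is exactly the content the paper takes for granted.
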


\begin{proof}
Since it is enough to check the coherence axioms for K-flat complexes in 
$\cat{D}(A^{\mrm{en}})$ and K-injective complexes in 
$\cat{D}(A \ot B)$, these statements are immediate consequences of 
the corresponding coherence axioms for the ordinary tensor and Hom operations,
and the adjunctions between them.  
See \cite[Section XI.2]{Mc}. 
\end{proof}

Let us introduce the notation 
\begin{equation} \label{eqn:1428}
\opn{lu} : A \ot_A^{\mrm{L}} M \iso M 
\end{equation}
for the {\em left unitor} isomorphism in $\cat{D}(A^{\mrm{en}})$ or 
$\cat{D}(A \ot B)$; and 
\begin{equation} \label{eqn:1429}
\opn{ru} : N \ot_A^{\mrm{L}} A \iso N 
\end{equation}
for the {\em right unitor} isomorphism in $\cat{D}(A^{\mrm{en}})$. 
We shall also require the canonical isomorphism 
\begin{equation} \label{eqn:1436}
\opn{lcu} : M \iso \opn{RHom}_A(A, M) 
\end{equation}
coming from the action (\ref{eqn:1435}), that (for lack of pre-existing name) 
we call the {\em left co-unitor}.  

\begin{rem} \label{rem:1425}
Actually, in the setup of Proposition \ref{prop:1425},  $\cat{D}(A^{\mrm{en}})$ 
is a {\em biclosed monoidal category}. The two internal Hom operations are 
$\opn{RHom}_A(-, -)$ and \lb $\opn{RHom}_{A^{\mrm{op}}}(-, -)$.
See \cite[Section VII.7]{Mc} or \cite{nLab} regarding these concepts. 
\end{rem}

\begin{rem} \label{rem:1420}
Here is the way to handle derived categories of bimodules in the absence of 
flatness. The idea is to choose K-flat resolutions 
$\til{A} \to A$ and $\til{B} \to B$ in the category 
$\catt{DGRng}^{\leq 0} \centover \K$
of nonpositive central DG $\K$-rings. See \cite{Ye2} or \cite{Ye6}. Then the
triangulated category
$\cat{D}(\til{A} \ot \til{B}^{\mrm{op}})$,
the derived category of DG left modules over the DG ring
$\til{A} \ot \til{B}^{\mrm{op}}$, is called the
{\em derived category of $A$-$B$-bimodules}.  
Propositions \ref{prop:1042}, \ref{prop:1031} and \ref{prop:1425} have 
straightforward extensions to the DG setup. 

The fact that the category 
$\cat{D}(\til{A} \ot \til{B}^{\mrm{op}})$
is independent of the resolutions (up to a canonical equivalence of 
triangulated categories) is not easy to prove. This will be done in the future 
papers \cite{VY} and \cite{Ye3}; see also the lecture notes \cite{Ye4}. 
\end{rem}

\section{Idempotent Copointed Objects} \label{sec:cop-obj}

Recall that we are working over a commutative base ring $\K$, 
and Convention \ref{conv:1425} is in force. From this section onward we also  
assume the following convention:

\begin{conv} \label{conv:1445}
The rings $A$ and $B$ are flat over $\K$. 
\end{conv}

Consider the enveloping ring 
$A^{\mrm{en}} = A \ot A^{\mrm{op}}$ of $A$. 
According to Proposition \ref{prop:1425} the triangulated category
$\cat{D}(A^{\mrm{en}})$
has a monoidal structure $(- \ot_{A}^{\mrm{L}} -)$,
with unit object $A$. 
There are also a left monoidal action 
$(- \ot_{A}^{\mrm{L}} -)$, and a right monoidal action  
$\opn{RHom}_A(-, -)$, of $\cat{D}(A^{\mrm{en}})$ 
on $\cat{D}(A \ot B^{\mrm{op}})$.
Correspondingly there are unitor isomorphisms 
$\opn{lu}$, $\opn{ru}$ and $\opn{lcu}$, explained in formulas 
(\ref{eqn:1428}), (\ref{eqn:1429}) and (\ref{eqn:1436}). 

\begin{dfn} \label{dfn:1185} \mbox{}
\begin{enumerate}
\item A {\em copointed object} in the monoidal category $\cat{D}(A^{\mrm{en}})$ 
is a pair $(P, \rho)$, consisting of a complex 
$P \in \cat{D}(A^{\mrm{en}})$ and a morphism 
$\rho : P \to A$ in $\cat{D}(A^{\mrm{en}})$.

\item The copointed object $(P, \rho)$ is called 
{\em idempotent} if the morphisms 
\[  \opn{lu} \circ \, (\rho \ot^{\mrm{L}}_A \opn{id}) , \ 
\opn{ru} \circ \, (\opn{id}  \ot^{\mrm{L}}_A \, \rho) : 
\ P \ot_A^{\mrm{L}} P \to P \]
in $\cat{D}(A^{\mrm{en}})$ are both isomorphisms.
\end{enumerate}
\end{dfn}

\begin{dfn} \label{dfn:1075}
Let  $(P, \rho)$ be a copointed object in $\cat{D}(A^{\mrm{en}})$. 
\begin{enumerate}
\item Define the triangulated functors 
\[ F , G : \cat{D}(A \ot B^{\mrm{op}}) \to \cat{D}(A \ot B^{\mrm{op}}) \]
to be
\[ F := P \ot_{A}^{\mrm{L}} (-) \quad \tup{and} \quad
G := \opn{RHom}_{A}(P, - ) . \]
 
\item Let 
\[ \si : F \to \opn{Id}_{\cat{D}(A \ot B^{\mrm{op}})} \quad \tup{and} \quad 
\tau : \opn{Id}_{\cat{D}(A \ot B^{\mrm{op}})} \to G \]
be the morphisms of triangulated functors from $\cat{D}(A \ot B^{\mrm{op}})$ to 
itself 
that are induced by the morphism $\rho : P \to A$. Namely
\[ \si_M : F(M) = P \ot_{A}^{\mrm{L}} M \to M , \quad 
\si_M := \opn{lu} \circ \, (\rho \ot_{A}^{\mrm{L}} \opn{id}_M)  \]
and 
\[ \tau_M : M \to G(M) = \opn{RHom}_{A}(P, M) , \quad 
\tau := \opn{RHom}_A(\rho, \opn{id}_M)  \circ \opn{lcu} . \]
\end{enumerate}
We refer to $(F, \si)$ and $(G, \tau)$ as the (co)pointed triangulated functors 
induced by the copointed object $(P, \rho)$.  
\end{dfn}

See formulas \ref{eqn:1428} and \ref{eqn:1436} regarding the isomorphisms 
$\opn{lu}$ and $\opn{lcu}$. Item (2) of the definition is shown in the 
commutative diagrams below 
in the category $\cat{D}(A \ot B^{\mrm{op}})$. 

\begin{equation} \label{eqb:1430}
\UseTips \xymatrix @C=8ex @R=6ex {
P \ot_{A}^{\mrm{L}} M
\ar[d]_{\rho \, \ot_{A}^{\mrm{L}} \opn{id}_M}
\ar[dr]^{\si_M}
\\
A \ot_{A}^{\mrm{L}} M
\ar[r]_(0.6){\opn{lu}}
&
M
}
\qquad
\UseTips \xymatrix @C=8ex @R=6ex {
M
\ar[r]^(0.3){\opn{lcu}}
\ar[dr]_{\tau_M}
&
\opn{RHom}_{A}(A, M)
\ar[d]^{\opn{RHom}_A(\rho, \opn{id}_M)}
\\
&
\opn{RHom}_{A}(P, M)
}
\end{equation}

\begin{dfn} \label{dfn:1232}
Let $(F, \si)$ and $(G, \tau)$ be the copointed and pointed triangulated 
functors on $\cat{D}(A \ot B^{\mrm{op}})$ from Definition \tup{\ref{dfn:1075}}. 
\begin{enumerate}
\item We define the full triangulated subcategory
$\cat{D}(A \ot B^{\mrm{op}})_F$ of  $\cat{D}(A \ot B^{\mrm{op}})$ to be 
\[ \cat{D}(A \ot B^{\mrm{op}})_F := 
\bigr\{ M \mid \si_M : F(M) \to M \tup{ is an isomorphism} \bigl\} . \]

\item  We define the full triangulated subcategory
$\cat{D}(A \ot B^{\mrm{op}})_G$ of  $\cat{D}(A \ot B^{\mrm{op}})$ to be 
\[ \cat{D}(A \ot B^{\mrm{op}})_G := 
\bigr\{ M \mid \tau_M : M \to G(M) \tup{ is an isomorphism} \bigl\} . \]
\end{enumerate}
\end{dfn}

\begin{rem} \label{rem:1240}
The notation used in the two definitions above was chosen to be consistent with 
that of \cite{PSY1}, in which $P$ is the telescope complex associated to a 
weakly proregular generating sequence of the ideal $\a$. See
Section \ref{sec:comm-rings} above, and Definition 3.8, 
Definition 3.11, Proposition 5.8 and Corollary 5.25 of \cite{PSY1}, 
where the functors are $G = \mrm{L} \La_{\a}$ and $F = \mrm{R} \Ga_{\a}$.
\end{rem}

\begin{lem} \label{lem:1340}
If the copointed object  $(P, \rho)$ is idempotent, then the copointed 
triangulated functor $(F, \si)$ and the 
pointed triangulated functor $(G, \tau)$ on $\cat{D}(A \ot B^{\mrm{op}})$
are idempotent.
\end{lem}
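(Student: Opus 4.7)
The plan is to verify each of the four morphisms
\[ \si_{F(M)}, \ F(\si_M), \ \tau_{G(M)}, \ G(\tau_M) \]
is an isomorphism by tracing its definition back to one of the two idempotence isomorphisms $\opn{lu} \circ (\rho \ot_A^{\mrm{L}} \opn{id}_P)$ and $\opn{ru} \circ (\opn{id}_P \ot_A^{\mrm{L}} \rho)$ supplied by the assumption that $(P, \rho)$ is idempotent, and then to invoke the fact that any triangulated functor preserves isomorphisms.

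First I would handle the copointed functor $(F, \si)$. Unwinding the definition,
\[ \si_{F(M)} = \opn{lu}_{F(M)} \circ (\rho \ot_A^{\mrm{L}} \opn{id}_{F(M)}) : P \ot_A^{\mrm{L}} (P \ot_A^{\mrm{L}} M) \to P \ot_A^{\mrm{L}} M . \]
Using the associator to identify $P \ot_A^{\mrm{L}} (P \ot_A^{\mrm{L}} M)$ with $(P \ot_A^{\mrm{L}} P) \ot_A^{\mrm{L}} M$, together with the compatibility of the left unitor with the associator, rewrites $\si_{F(M)}$ as $\bigl( \opn{lu}_P \circ (\rho \ot_A^{\mrm{L}} \opn{id}_P) \bigr) \ot_A^{\mrm{L}} \opn{id}_M$. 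The morphism inside the parentheses is exactly the left idempotence isomorphism of $(P, \rho)$, and tensoring with $\opn{id}_M$ is the image of a triangulated functor, so $\si_{F(M)}$ is an isomorphism. For $F(\si_M) = \opn{id}_P \ot_A^{\mrm{L}} \si_M$, I would apply the triangle coherence axiom of the monoidal structure, which says that $\opn{id}_P \ot_A^{\mrm{L}} \opn{lu}_M$ coincides, modulo the associator, with $\opn{ru}_P \ot_A^{\mrm{L}} \opn{id}_M$. Substituting $\si_M = \opn{lu}_M \circ (\rho \ot_A^{\mrm{L}} \opn{id}_M)$ and rearranging then presents $F(\si_M)$ as $\bigl( \opn{ru}_P \circ (\opn{id}_P \ot_A^{\mrm{L}} \rho) \bigr) \ot_A^{\mrm{L}} \opn{id}_M$, which is an isomorphism by the right idempotence hypothesis.

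For the pointed functor $(G, \tau)$, I would transport the problem across the Hom-tensor adjunction of Proposition \ref{prop:1031}(4), which yields a natural isomorphism
\[ G(G(M)) = \opn{RHom}_A(P, \opn{RHom}_A(P, M)) \cong \opn{RHom}_A(P \ot_A^{\mrm{L}} P, M) . \]
Under this identification both $\tau_{G(M)}$ and $G(\tau_M)$ become morphisms $\opn{RHom}_A(P, M) \to \opn{RHom}_A(P \ot_A^{\mrm{L}} P, M)$. Expanding $\tau = \opn{RHom}_A(\rho, \opn{id}) \circ \opn{lcu}$ and chasing through the adjunction isomorphism shows, by naturality, that each of these two morphisms has the form $\opn{RHom}_A(\psi, \opn{id}_M)$ where $\psi$ is the left, respectively right, idempotence isomorphism of $(P, \rho)$ (the sides are swapped relative to the $F$-case because of the contravariance of the first argument of $\opn{RHom}$). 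Since both $\psi$'s are isomorphisms and $\opn{RHom}_A(-, M)$ carries isomorphisms to isomorphisms, both $\tau_{G(M)}$ and $G(\tau_M)$ are isomorphisms.

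The main obstacle will be the bookkeeping of coherence data: pinning down precisely which of $\tau_{G(M)}$ and $G(\tau_M)$ matches the left idempotence isomorphism and which matches the right one, and verifying the two rewritings in the $F$-case on the nose. These amount to standard diagram chases that are formally guaranteed by MacLane's coherence theorem and the functoriality of the tensor-Hom adjunction, but they are the only places where genuine care is needed; once the identifications are established, the conclusion is immediate from the preservation of isomorphisms by triangulated functors.
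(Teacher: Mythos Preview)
Your proposal is correct and follows essentially the same approach as the paper's proof: both arguments rewrite each of the four morphisms, modulo associativity and adjunction coherence, as $(-) \ot_A^{\mrm{L}} \opn{id}_M$ or $\opn{RHom}_A(-, \opn{id}_M)$ applied to one of the two idempotence isomorphisms of $(P, \rho)$. The paper records exactly the identifications you describe (your $\si_{F(M)} \leftrightarrow$ left, $F(\si_M) \leftrightarrow$ right, and the swapped pairing for $G$ via contravariance), marking the coherence steps with daggers rather than invoking the triangle axiom and MacLane coherence explicitly as you do.
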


\begin{proof}
For $M \in \cat{D}(A \ot B^{\mrm{op}})$ there are equalities
(up to the associativity isomorphism of $(- \ot_A^{\mrm{L}} -)$, 
that should be inserted in the locations marked by ``$\dag$''): 
\begin{equation} \label{eqn:1437}
F(\si_M) = \opn{id}_P \ot_A^{\mrm{L}} \, 
\bigl( \opn{lu} \circ \, (\rho \ot_{A}^{\mrm{L}} \opn{id}_M) \bigr) 
=^{\dag} \, 
\bigl( \opn{ru} \circ \, (\opn{id}_P \ot_A^{\mrm{L}} \, \rho) \bigr)
\ot_{A}^{\mrm{L}} \opn{id}_M 
\end{equation}
and 
\begin{equation} \label{eqn:1438}
\si_{F(M)} = \opn{lu} \circ \, \bigl( \rho \ot_A^{\mrm{L}} 
(\opn{id}_P \ot_A^{\mrm{L}} \opn{id}_M) \bigr) 
=^{\dag} \, 
\bigl( \opn{lu} \circ (\rho \ot_A^{\mrm{L}} \opn{id}_P) \bigr) \ot_A^{\mrm{L}} 
\opn{id}_M 
\end{equation}
of morphisms
\[ F(F(M)) = P \ot_A^{\mrm{L}} P \ot_A^{\mrm{L}} M \to 
F(M) =  P \ot_A^{\mrm{L}} M  \]
in $\cat{D}(A \ot B^{\mrm{op}})$.
Because both
\begin{equation} \label{eqn:2037}
\opn{lu} \circ \, (\rho \ot_A^{\mrm{L}} \opn{id}_P) \quad 
\tup{and} \quad 
\opn{ru} \circ \, (\opn{id}_P \ot_A^{\mrm{L}} \, \rho)
\end{equation}
are isomorphisms in $\cat{D}(A^{\mrm{en}})$, it follows that the morphisms
$F(\si_M)$ and $\si_{F(M)}$ are isomorphisms in 
$\cat{D}(A \ot B^{\mrm{op}})$. 

There are also  equalities (up to the associativity and adjunction isomorphisms 
of $(- \ot_A^{\mrm{L}} -)$ and $\opn{RHom}_A(-,-)$, 
that should be inserted in the locations marked by by ``$\ddag$''): 
\begin{equation} \label{eqn:1439}
\begin{aligned}
& G(\tau_M) = \opn{RHom}_A(\opn{id}_P,  \opn{RHom}_A(\rho, \opn{id}_M)) \circ 
\opn{RHom}_A(\opn{id}_P, \opn{lcu}) \\
& \qquad =^{\ddag} \, 
\opn{RHom}_A(\rho \ot_A^{\mrm{L}} \opn{id}_P, \opn{id}_M) \circ  
\opn{RHom}_A(\opn{lu}, \opn{id}_M) 
\end{aligned}
\end{equation}
and 
\begin{equation} \label{eqn:1440}
\begin{aligned}
& \tau_{G(M)} = \opn{RHom}_A(\rho, \opn{id}_{\opn{RHom}_A(P, M)}) \circ 
\opn{lcu} \\
& \qquad =^{\ddag} \, 
\opn{RHom}_A(\opn{id}_P \ot_A^{\mrm{L}} \, \rho) 
\circ  \opn{RHom}_A(\opn{ru}, \opn{id}_M) 
\end{aligned}
\end{equation}
of morphisms
\[ \begin{aligned}
& G(M) = \opn{RHom}_A(P, M) \to 
\\
&
\qquad G(G(M)) = \opn{RHom}_A \bigl( P, \opn{RHom}_A(P, M)) \cong
\opn{RHom}_A(P \ot_A^{\mrm{L}} P, M) 
\end{aligned}   \]
in $\cat{D}(A \ot B^{\mrm{op}})$. 
Because of the isomorphisms (\ref{eqn:2037}) in $\cat{D}(A^{\mrm{en}})$, it 
follows that the morphisms $G(\tau_M)$ and $\tau_{G(M)}$ are isomorphisms in 
$\cat{D}(A \ot B^{\mrm{op}})$. 
\end{proof}

\begin{lem} \label{lem:1470}
Consider the functors $F$ and $G$ on $\cat{D}(A \ot B^{\mrm{op}})$ 
from Definition \tup{\ref{dfn:1075}}. For every  
$M, N \in \cat{D}(A \ot B^{\mrm{op}})$ there is a bijection 
\[ \opn{Hom}_{\cat{D}(A \ot B^{\mrm{op}})}(F(M), N) \cong 
\opn{Hom}_{\cat{D}(A \ot B^{\mrm{op}})}(M, G(N)) , \]
and it is functorial in $M$ and $N$.  
\end{lem}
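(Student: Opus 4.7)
The plan is to realize the asserted bijection as the classical Hom-tensor adjunction at the level of complexes, applied to appropriately chosen resolutions.

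First, choose a K-projective resolution $\til{P} \to P$ in $\cat{C}(A^{\mrm{en}})$ and a K-injective resolution $N \to J$ in $\cat{C}(A \ot B^{\mrm{op}})$. Since $A$ and $B^{\mrm{op}}$ are both flat over $\K$, Lemma \ref{lem:1070} ensures that $\til{P}$ is K-flat over $A^{\mrm{op}}$ and that $J$ is K-injective over $A$. Propositions \ref{prop:1042}(2) and \ref{prop:1031}(2) then yield isomorphisms
\[ F(M) \cong \til{P} \ot_{A} M \qquad \tup{and} \qquad G(N) \cong \opn{Hom}_{A}(\til{P}, J) \]
in $\cat{D}(A \ot B^{\mrm{op}})$, where the right-hand side of the second isomorphism is the internal Hom complex.

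Next, I would use the classical (underived) Hom-tensor adjunction at the level of DG $\K$-complexes, which gives a natural isomorphism
\[ \opn{Hom}_{A \ot B^{\mrm{op}}}(\til{P} \ot_{A} M, J) \cong
\opn{Hom}_{A \ot B^{\mrm{op}}}(M, \opn{Hom}_A(\til{P}, J)) \]
in $\cat{C}(\K)$, functorial in $M$ and $N$. Taking $\opn{H}^0$ on both sides, and using that both $J$ and $\opn{Hom}_A(\til{P}, J)$ are K-injective in $\cat{C}(A \ot B^{\mrm{op}})$, each side computes the corresponding morphism set in $\cat{D}(A \ot B^{\mrm{op}})$. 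This produces the desired bijection together with its functoriality in $M$ and $N$.

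The main obstacle is verifying that $\opn{Hom}_A(\til{P}, J)$ is K-injective in $\cat{C}(A \ot B^{\mrm{op}})$. The standard argument proceeds as follows: for every acyclic $X \in \cat{C}(A \ot B^{\mrm{op}})$, the same complex-level Hom-tensor identity gives
\[ \opn{Hom}_{A \ot B^{\mrm{op}}}(X, \opn{Hom}_A(\til{P}, J)) \cong
\opn{Hom}_{A \ot B^{\mrm{op}}}(\til{P} \ot_A X, J) . \]
The right-hand side is acyclic because $\til{P} \ot_A X$ is acyclic (by K-flatness of $\til{P}$ over $A^{\mrm{op}}$) and $J$ is K-injective in $\cat{C}(A \ot B^{\mrm{op}})$. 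This confirms the required K-injectivity and closes the argument.
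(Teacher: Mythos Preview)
Your proof is correct and follows essentially the same approach as the paper's own proof: choose resolutions $\til{P} \to P$ and $N \to J$, use the complex-level Hom-tensor adjunction, verify that $\opn{Hom}_A(\til{P}, J)$ is K-injective in $\cat{C}(A \ot B^{\mrm{op}})$, and then take $\opn{H}^0$. The only cosmetic difference is that the paper takes $\til{P}$ to be merely K-flat in $\cat{C}(A^{\mrm{en}})$ rather than K-projective, and it states the K-injectivity of $\opn{Hom}_A(\til{P}, J)$ without writing out the verification you supply.
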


\begin{proof}
Choose a K-injective resolution $N \to J$ in 
$\cat{C}(A \ot B^{\mrm{op}})$, and a K-flat resolution 
$\til{P} \to P$ in $\cat{C}(A^{\mrm{en}})$.
The usual Hom-tensor adjunction gives rise to an isomorphism 
\begin{equation} \label{eqn:1470}
\opn{Hom}_{A \ot B^{\mrm{op}}}(\til{P} \ot_A M, J) \cong 
\opn{Hom}_{A \ot B^{\mrm{op}}}(M, \opn{Hom}_{A}(\til{P}, J))
\end{equation}
in $\cat{C}(\K)$. From this we deduce that 
$\opn{Hom}_{A}(\til{P}, J)$
is K-injective in $\cat{C}(A \ot B^{\mrm{op}})$.
We see that the isomorphism (\ref{eqn:1470}) represents an isomorphism 
\begin{equation} \label{eqn:1471}
\opn{RHom}_{A \ot B^{\mrm{op}}}(P \ot_A^{\mrm{L}} M, N) \cong 
\opn{RHom}_{A \ot B^{\mrm{op}}}(M, \opn{RHom}_{A}(P, N))
\end{equation}
in $\cat{D}(\K)$. Taking $\opn{H}^0$ in (\ref{eqn:1471}) gives us the 
isomorphism 
\[ \opn{Hom}_{\cat{D}(A \ot B^{\mrm{op}})}(P \ot_A^{\mrm{L}} M, N) \cong 
\opn{Hom}_{\cat{D}(A \ot B^{\mrm{op}})}(M, \opn{RHom}_{A}(P, N)) \]
in $\cat{M}(\K)$. This is what we want. 
\end{proof}

\begin{lem} \label{lem:1180}
Consider the functors $F$ and $G$ on $\cat{D}(A \ot B^{\mrm{op}})$ 
from Definition \tup{\ref{dfn:1075}}.  The kernel of $F$ equals the kernel 
of $G$. Namely for every $M \in \cat{D}(A \ot B^{\mrm{op}})$ we have $F(M) = 0$ 
if and only if $G(M) = 0$. 
\end{lem}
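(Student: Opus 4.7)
My plan is to use the derived adjunction $F \dashv G$ from Lemma~\ref{lem:1470}, which provides the natural bijection $\opn{Hom}(F(M), N) \cong \opn{Hom}(M, G(N))$. A basic consequence of any adjunction is that $F(M) = 0$ if and only if the unit $\eta_M \colon M \to G(F(M))$ is zero: indeed, $\eta_M$ corresponds to $\opn{id}_{F(M)}$ under the adjunction bijection (with $N = F(M)$), and $\opn{id}_{F(M)} = 0$ characterizes $F(M) = 0$. Dually, $G(M) = 0$ if and only if the counit $\epsilon_M \colon F(G(M)) \to M$ is zero.

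For the direction $F(M) = 0 \Rightarrow G(M) = 0$, I plan to invoke the triangle identity $\opn{id}_{G(M)} = G(\epsilon_M) \circ \eta_{G(M)}$ to realize $G(M)$ as a retract of $G(F(G(M)))$. It then suffices to show $F(G(M)) = 0$, since in that case $G(F(G(M))) = G(0) = 0$ and $G(M)$ becomes a retract of the zero object. The reverse direction $G(M) = 0 \Rightarrow F(M) = 0$ is handled dually, using the other triangle identity $\opn{id}_{F(M)} = \epsilon_{F(M)} \circ F(\eta_M)$ to realize $F(M)$ as a retract of $F(G(F(M)))$, and reducing to showing $G(F(M)) = 0$.

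The main obstacle is therefore the internal reduction step: showing $F(M) = 0 \Rightarrow F(G(M)) = 0$, and its dual. This is not a formal consequence of the adjunction---it exploits that both $F = P \ot_A^{\mrm{L}} (-)$ and $G = \opn{RHom}_A(P, -)$ are built from the same bimodule $P$. I would approach this by reducing via the conservativity of restriction (Proposition~\ref{prop:1071}) to the case where the complexes are computed in $\cat{D}(\K)$, using K-flat and K-injective resolutions of $P$ to give explicit chain-level models of $F(M)$ and $F(G(M)) = P \ot_A^{\mrm{L}} \opn{RHom}_A(P, M)$. In these explicit models the vanishing of $F(M)$ as a complex of $\K$-modules should propagate to the vanishing of $F(G(M))$ through the tensor-Hom interplay, exploiting that $\opn{RHom}_A(P, M)$ only receives contributions from the same $P$-compatible part of $M$ that drives $F(M)$, so that if the latter vanishes then so must the former.
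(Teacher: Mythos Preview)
Your reduction via the triangle identities is sound as far as it goes, but the step you yourself flag as the ``main obstacle''---proving $F(M) = 0 \Rightarrow F(G(M)) = 0$---is left unproven. The paragraph about ``explicit chain-level models'' and ``the $P$-compatible part of $M$'' is not an argument: there is no general mechanism by which the acyclicity of $P \ot_A^{\mrm{L}} M$ forces the acyclicity of $P \ot_A^{\mrm{L}} \opn{RHom}_A(P, M)$ for an arbitrary complex of bimodules $P$. The fact that $F$ and $G$ are both built from the same $P$ does not, by itself, create such a link.

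The ingredient you never invoke is the \emph{idempotence} of $(P, \rho)$. This is an implicit standing hypothesis here (the lemma is only used inside Theorem~\ref{thm:1075}, and its proof invokes Lemma~\ref{lem:1340}), and without it the statement need not hold. The paper's argument is short and uses idempotence directly, with no detour through $F(G(M))$: from $F(M) = 0$ the adjunction of Lemma~\ref{lem:1470} (with $N = M$) gives $\opn{Hom}(M, G(M)) = 0$, so in particular $\tau_M : M \to G(M)$ is the zero morphism; applying $G$ yields $G(\tau_M) = 0$; but Lemma~\ref{lem:1340} says $G(\tau_M)$ is an isomorphism, hence $G(M) = 0$. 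The reverse implication is dual, using $\si_M$ and the idempotence of $(F, \si)$. Your triangle-identity framework is not wrong, but it trades a one-line use of idempotence for a reduction step that is at least as hard as the original claim.
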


\begin{proof}
We shall use the adjunction formula from Lemma \ref{lem:1470}, with $N = M$. 

First assume $F(M) = 0$. Then 
$\opn{Hom}_{\cat{D}(A \ot B^{\mrm{op}})}(F(M), M)$ is zero, 
and by Lemma \ref{lem:1470} we see that 
$\opn{Hom}_{\cat{D}(A \ot B^{\mrm{op}})}(M, G(M))$ is zero too. 
This implies that the morphism $\tau_M : M \to G(M)$ is zero.
Applying $G$ to it we deduce that the morphism 
\[ G(\tau_M) : G(M) \to G(G(M)) \]
is zero. But by Lemma \ref{lem:1340} the pointed functor 
$(G, \tau)$ is idempotent, and this means that $G(\tau_M)$ is an isomorphism. 
Therefore $G(M) = 0$. 

Now assume that $G(M) = 0$. Again using Lemma \ref{lem:1470}, but now in 
the reverse direction, we see that the morphism
$\si_M : F(M) \to M$ is zero. Therefore the morphism
\[ F(\si_M) : F(F(M)) \to F(M) \]
is zero. But by Lemma \ref{lem:1340} the copointed functor 
$(F, \si)$ is idempotent, and this means that $F(\si_M)$ is an isomorphism. 
Therefore $F(M) = 0$. 
\end{proof}

Recall that Conventions \ref{conv:1425} and \ref{conv:1445} are in force. 

\begin{thm}[Abstract Equivalence] \label{thm:1075}
Let $A$ and $B$ be flat rings, and let $(P, \rho)$ be an 
idempotent copointed object in $\cat{D}(A^{\mrm{en}})$.
Consider the triangulated functors 
\[ F, G : \cat{D}(A \ot B^{\mrm{op}}) \to \cat{D}(A \ot B^{\mrm{op}}) \]
and the categories 
$\cat{D}(A \ot B^{\mrm{op}})_F$ and $\cat{D}(A \ot B^{\mrm{op}})_G$ from 
Definitions \tup{\ref{dfn:1075}}
and \tup{\ref{dfn:1232}}. Then the following hold\tup{:}
\begin{enumerate}
\item The functor $G$ is a right adjoint to $F$. 

\item The copointed triangulated functor $(F, \si)$ and the 
pointed triangulated functor $(G, \tau)$ are idempotent.

\item The categories  $\cat{D}(A \ot B^{\mrm{op}})_F$ and 
$\cat{D}(A \ot B^{\mrm{op}})_G$ are the essential 
images of the functors $F$ and $G$ respectively. 

\item The functor 
\[ F : \cat{D}(A \ot B^{\mrm{op}})_G \to \cat{D}(A \ot B^{\mrm{op}})_F \]
is an equivalence of triangulated categories, with quasi-inverse $G$. 
\end{enumerate}
\end{thm}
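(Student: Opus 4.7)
Parts (1) and (2) require no new work. Part (1) is precisely the adjunction of Lemma~\ref{lem:1470}, which exhibits $G = \opn{RHom}_A(P,-)$ as right adjoint to $F = P \ot_A^{\mrm{L}} (-)$. Part (2) is the content of Lemma~\ref{lem:1340}: idempotence of the copointed object $(P,\rho)$ in $\cat{D}(A^{\mrm{en}})$ propagates to idempotence of the induced (co)pointed triangulated functors $(F,\si)$ and $(G,\tau)$ on $\cat{D}(A \ot B^{\mrm{op}})$.

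For part (3) I would run the standard essential-image argument on $(F,\si)$. If $\si_M$ is an isomorphism then $M \cong F(M)$ lies in $\opn{EssIm}(F)$. Conversely, given an isomorphism $\phi : F(N) \iso M$, naturality of $\si$ gives $\si_M = \phi \circ \si_{F(N)} \circ F(\phi)^{-1}$; since $\si_{F(N)}$ is an isomorphism by the idempotence of $(F,\si)$, so is $\si_M$. The analogous argument with $\tau$ in place of $\si$ identifies $\opn{EssIm}(G)$ with $\cat{D}(A \ot B^{\mrm{op}})_G$. In particular $F$ takes values in $\cat{D}(A \ot B^{\mrm{op}})_F$ and $G$ takes values in $\cat{D}(A \ot B^{\mrm{op}})_G$, so the restricted functors needed in part~(4) are well defined.

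The substantive work lies in part~(4). Let $\epsilon : FG \to \opn{Id}$ and $\eta : \opn{Id} \to GF$ denote the counit and unit of the adjunction supplied by (1). The first step is to establish the two compatibility identities
\[
\si_M \; = \; \epsilon_M \circ F(\tau_M) \quad \text{and} \quad \tau_N \; = \; G(\si_N) \circ \eta_N ,
\]
which hold because $\epsilon$ and $\eta$ are the evaluation and coevaluation of the tensor--Hom adjunction from Lemma~\ref{lem:1470}, while $\si$ and $\tau$ are obtained by transporting $\rho : P \to A$ through the unitor $\opn{lu}$ and the co-unitor $\opn{lcu}$ respectively. This is a direct bookkeeping computation in the biclosed monoidal structure on $\cat{D}(A^{\mrm{en}})$ (see Remark~\ref{rem:1425}).

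With those identities in place, the rest is a symmetric cone-and-kernel argument. For $N \in \cat{D}(A \ot B^{\mrm{op}})_G$, embed $\si_N$ in a distinguished triangle
\[ F(N) \xrightarrow{\si_N} N \to C \to F(N)[1] . \]
Applying $F$ and using that $F(\si_N)$ is an isomorphism by idempotence of $(F,\si)$, the cone $F(C)$ is zero. Lemma~\ref{lem:1180} then forces $G(C) \cong 0$, so $G(\si_N)$ is an isomorphism; combined with $\tau_N = G(\si_N) \circ \eta_N$ and the assumption that $\tau_N$ is an isomorphism, this yields that $\eta_N$ is an isomorphism. The mirror argument --- now taking the cone of $\tau_M$ for $M \in \cat{D}(A \ot B^{\mrm{op}})_F$, applying $G$, invoking idempotence of $(G,\tau)$ and then Lemma~\ref{lem:1180}, and finally using $\si_M = \epsilon_M \circ F(\tau_M)$ --- shows $\epsilon_M$ is an isomorphism. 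These two facts together deliver the adjoint equivalence asserted in (4). The only genuine obstacle is the bookkeeping required for the two compatibility identities; once they are in hand, everything else is formal.
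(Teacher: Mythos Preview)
Your proof is correct and close in spirit to the paper's, but the packaging of part~(4) differs in an interesting way. The paper works one level up, in $\cat{D}(A^{\mrm{en}})$: it embeds $\rho$ in a distinguished triangle $P \xrightarrow{\rho} A \to N \xrightarrow{\vartriangle}$, uses idempotence to show $P \ot_A^{\mrm{L}} N = 0$ and $N \ot_A^{\mrm{L}} P = 0$, and then tensors (respectively, applies $\opn{RHom}_A(-,M)$) to transport this vanishing into $\cat{D}(A \ot B^{\mrm{op}})$, invoking Lemma~\ref{lem:1180} exactly as you do. The resulting isomorphisms are $\tau_M^{-1} \circ G(\si_M) : G(F(M)) \iso M$ and its mirror, built by hand rather than identified as (inverses of) the adjunction unit and counit.

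Your approach trades that monoidal-object computation for the two compatibility identities $\si_M = \epsilon_M \circ F(\tau_M)$ and $\tau_N = G(\si_N) \circ \eta_N$, which amount to saying that $\si_M$ and $\tau_M$ are adjoint transposes of one another. This is genuinely a routine check (at the level of the underived tensor--Hom adjunction it is the observation that $m \mapsto (p \mapsto \rho(p) \cdot m)$ is the transpose of $p \ot m \mapsto \rho(p) \cdot m$), and once it is in hand your cone argument is clean: you get directly that $\eta$ and $\epsilon$ restrict to isomorphisms, so the equivalence in (4) is an \emph{adjoint} equivalence on the nose. The paper's argument, by contrast, produces functorial isomorphisms $GF(M) \cong M$ and $FG(M) \cong M$ without explicitly naming them as unit and counit; in fact, unwinding the formula $\tau_M^{-1} \circ G(\si_M)$ via your compatibility identity shows it is exactly $\eta_M^{-1}$, so the two arguments converge to the same isomorphism. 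The paper's route avoids the bookkeeping of the compatibility identities; yours avoids the monoidal-object manipulation and yields a slightly sharper conclusion.
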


\begin{proof}
(1) This is Lemma \ref{lem:1470}. 

\medskip \noindent
(2) This is Lemma \ref{lem:1340}. 

\medskip \noindent
(3) Take any $M \in \cat{D}(A \ot B^{\mrm{op}})_G$. Then $M \cong G(M)$, so 
that $M$ is in the essential image of $G$. Conversely, suppose there is an 
isomorphism $\phi : M \iso G(N)$ for some $N \in \cat{D}(A \ot B^{\mrm{op}})$. 
We have to prove that $\tau_M$ is an isomorphism. There is a commutative 
diagram 
\[ \UseTips \xymatrix @C=8ex @R=6ex {
M
\ar[r]^{ \phi }
\ar[d]_{ \tau_M }
& 
G(N)
\ar[d]^{ \tau_{G(N)} }
\\
G(M)
\ar[r]^(0.44){ G(\phi) }
&
G(G(N))
} \]
in $\cat{D}(A \ot B^{\mrm{op}})$ with horizontal isomorphisms.
By Lemma \ref{lem:1340} the morphism $\tau_{G(N)}$ is an isomorphism. 
Therefore $\tau_M$ is an isomorphism. 

A similar argument (with reversed arrows) tells us that the essential image of 
$F$ is $\cat{D}(A \ot B^{\mrm{op}})_F$.

\medskip \noindent 
(4) The morphism $\rho : P \to A$  sits inside a 
distinguished triangle 
\begin{equation} \label{eqn:1184}
P \xar{\rho} A \to N \xar{\vartriangle}  
\end{equation}
in $\cat{D}(A^{\mrm{en}})$. Let us apply the functor 
$P \ot_A^{\mrm{L}} - $ to (\ref{eqn:1184}). We get a distinguished triangle 
\[ P \ot_A^{\mrm{L}} P 
\xar{\opn{ru} \circ \, (\opn{id} \ot_A^{\mrm{L}} \, \rho)}
P \to P \ot_A^{\mrm{L}} N \xar{\vartriangle} \]
in $\cat{D}(A^{\mrm{en}})$.
By the idempotence condition, the first morphism above is an isomorphism; and 
hence 
$P \ot_A^{\mrm{L}} N = 0$. Therefore for every $M \in \cat{D}(A \ot 
B^{\mrm{op}})$ the complex
\[ F(N \ot_A^{\mrm{L}} M) = P \ot_A^{\mrm{L}} N \ot_A^{\mrm{L}} M \]
is zero. Lemma \ref{lem:1180} tells us that 
\[ G(N \ot_A^{\mrm{L}} M) = 
\opn{RHom}_{A}(P, N \ot_A^{\mrm{L}} M) \]
is zero. 

Now we go back to the distinguished triangle (\ref{eqn:1184}) and we apply to 
it the functor $(-) \ot_A^{\mrm{L}} M$, and then the functor
$\opn{RHom}_{A}(P, - )$. The result is the distinguished triangle 
\[ \opn{RHom}_{A}(P, P \ot_A^{\mrm{L}} M) \xar{ \al_M } 
\opn{RHom}_{A}(P, M) \to 
\opn{RHom}_{A}(P, N \ot_A^{\mrm{L}} M)  \xar{ \vartriangle } \]
in $\cat{D}(A \ot B^{\mrm{op}})$. Because the third term is zero, it follows 
that $\al_M : G(F(M)) \to G(M)$ is an isomorphism. 
If moreover $M \in \cat{D}(A \ot B^{\mrm{op}})_G$, then $\tau_M$ is an 
isomorphism too, and thus we have an isomorphism 
\[ \tau_M^{-1} \circ \al_M : G(F(M)) \to M \]
that's functorial in $M$. 

Similarly, if we apply the functor 
$(-) \ot_A^{\mrm{L}} P$ to (\ref{eqn:1184}), we get a distinguished triangle 
\[ P \ot_A^{\mrm{L}} P 
\xar{\opn{lu} \circ \, (\rho \, \ot_A^{\mrm{L}} \opn{id})}
P \to N \ot_A^{\mrm{L}} P \xar{\vartriangle} \]
in $\cat{D}(A^{\mrm{en}})$.
By the idempotence condition, the first morphism above is an isomorphism; and 
hence 
$N \ot_A^{\mrm{L}} P = 0$. Therefore for every $M \in \cat{D}(A \ot 
B^{\mrm{op}})$ the complex
\[ G(\opn{RHom}_{A}(N, M)) = 
\opn{RHom}_{A}(P, \opn{RHom}_{A}(N, M)) \cong 
\opn{RHom}_{A}(N \ot_A^{\mrm{L}} P, M) \]
is zero. Lemma \ref{lem:1180} tells us that 
\[ F(\opn{RHom}_{A}(N, M)) = P \ot_A^{\mrm{L}} \opn{RHom}_{A}(N, M) \]
is zero. 

Next we apply the functor $\opn{RHom}_{A}(-, M)$, 
and then the functor $P \ot_A^{\mrm{L}} (-)$, to the distinguished triangle 
(\ref{eqn:1184}). We obtain distinguished triangle 
\[ P \ot_A^{\mrm{L}} \opn{RHom}_{A}(N, M)  \to 
P \ot_A^{\mrm{L}} M \xar{\be_M}
P \ot_A^{\mrm{L}} \opn{RHom}_{A}(P, M)  \xar{ \vartriangle } \]
in $\cat{D}(A \ot B^{\mrm{op}})$. By the previous calculation the first term in 
this triangle is zero, and so 
$\be_M : F(M) \to F(G(M))$ is an isomorphism. 
If moreover $M \in \cat{D}(A \ot B^{\mrm{op}})_F$, then $\si_M$ is an 
isomorphism too, and thus we have an isomorphism 
\[ \be_M \circ \si_M^{-1} : M \to F(G(M)) \]
that's functorial in $M$.
\end{proof}

\begin{rem} \label{rem:1421}
The content of this section is not very hard to extend to the DG setup: $A$ and 
$B$ can be nonpositive K-flat central DG $\K$-rings, as in Remark 
\ref{rem:1420}.
\end{rem}

\begin{rem} \label{rem:1445}
Theorem \ref{thm:1075} can be interpreted as saying that every idempotent 
copointed object in $\cat{D}(A^{\mrm{en}})$ induces a recollement on the
category $\cat{D}(A \ot B^{\mrm{op}})$. This is not unexpected; cf.\ 
\cite[Paragraph 4.13.1]{Kr}. This perspective, in the context of weakly stable 
torsion classes, will be examined in a future paper.
\end{rem}

\section{From Torsion Classes to Copointed Objects} 
\label{sec:tors-to-obj}

In this section we prove Theorems \ref{thm:1200} and \ref{thm:1370}.
Conventions \ref{conv:1425} and \ref{conv:1445} are in place. 
Recall that $\K$ is a commutative base ring, and $A$ and $B$ are flat 
central $\K$-rings. We write $\ot$ as shorthand for $\ot_{\K}$. There is a 
restriction functor 
\[ \opn{Rest}_A : \cat{M}(A \ot B^{\mrm{op}}) \to \cat{M}(A) \]
that forgets the right $B$-module structure. This extends to restriction 
functors on $\cat{C}(-)$ and $\cat{D}(-)$ with the same notation.

Let $\catt{T}$ be a torsion class in $\cat{M}(A)$.
The torsion functor associated to  $\catt{T}$  is $\Ga_{\catt{T}}$, and the 
Gabriel filter is $\opn{Filt}(\catt{T})$. See Section \ref{sec:tors-cls} for a 
review of these concepts.

\begin{dfn} \label{dfn:1475}
Given a torsion class $\catt{T} \sub \cat{M}(A)$, the 
{\em bimodule torsion class} 
\[ \catt{T} \ot B^{\mrm{op}} \sub \cat{M}(A \ot B^{\mrm{op}}) \]
is  defined as follows:
\[ \catt{T} \ot B^{\mrm{op}} := 
\bigl\{ M \in \cat{M}(A \ot B^{\mrm{op}}) \mid 
\opn{Rest}_A(M) \in \catt{T} \bigr\} .  \]
\end{dfn}

\begin{rem} \label{rem:2040}
The expression $\catt{T} \ot B^{\mrm{op}}$ in the definition above does mean 
that this is a genuine tensor product here; it is only suggestive notation. 

However, on the level of Gabriel filters, the expression
$\catt{T} \ot B^{\mrm{op}}$ is a better 
approximation of what really happens:  
the filter $\opn{Filt}(\catt{T}  \ot B^{\mrm{op}})$
is the filter of left ideals in the ring in $A \ot B^{\mrm{op}}$ 
that is generated by the set of left ideals
\[ \bigl\{ \a \ot B^{\mrm{op}} \sub A \ot B^{\mrm{op}}
\mid \a \in \opn{Filt}(\catt{T})  \bigr\} . \]
See next example. 
\end{rem}

\begin{exa} \label{exa:2040}   
Suppose $\a \sub A$ is a two-sided ideal that is finitely generated as a left 
ideal.  In the notation of Definition \ref{dfn:1065}, there is a torsion class
$\catt{T} :=  \catt{T}_{\a} \sub \cat{M}(A)$.
This places us in the situation of Definition \ref{dfn:1475}, and there is a 
bimodule torsion class 
$\catt{T} \ot B^{\mrm{op}}$ in $\cat{M}(A \ot B^{\mrm{op}})$.

There is another way to view the bimodule torsion class 
$\catt{T} \ot B^{\mrm{op}}$. Consider 
the two-sided ideal
$\a \ot B^{\mrm{op}}$ in the ring $A \ot B^{\mrm{op}}$. 
Then, as torsion classes 
in $\cat{M}(A \ot B^{\mrm{op}})$,
and with the notation of Definition \ref{dfn:1065}, we have
$\catt{T} \ot B^{\mrm{op}} = \catt{T}_{\a \ot B^{\mrm{op}}}$.
\end{exa}

The torsion functor $\Ga_{\catt{T} \ot B^{\mrm{op}}}$ on
$\cat{M}(A \ot B^{\mrm{op}})$
satisfies this formula: there is equality
\begin{equation}  \label{eqn:1050} 
\Ga_{\catt{T} \ot B^{\mrm{op}}}(M) = \lim_{\substack{ \xar{ } \\ \a \in 
\opn{Filt}(\catt{T}) }} 
\, \opn{Hom}_A(A / \a, M)   
\end{equation}
of submodules of $M$. 
There is a morphism of functors 
\begin{equation}   \label{eqn:1080} 
\si : \Ga_{\catt{T} \ot B^{\mrm{op}}} \to 
\opn{Id}_{\cat{M}(A \ot B^{\mrm{op}})} .
\end{equation}
The pair $(\Ga_{\catt{T} \ot B^{\mrm{op}}}, \si)$ 
is an idempotent copointed functor on the category  \lb 
$\cat{M}(A \ot B^{\mrm{op}})$. 
The copointed functor $(\Ga_{\catt{T} \ot B^{\mrm{op}}}, \si)$ extends in the 
obvious way 
to complexes, giving rise to an idempotent copointed functor on the category  
$\cat{C}(A \ot B^{\mrm{op}})$.

By definition the diagram of functors
\begin{equation} \label{eqn:1250}
 \UseTips \xymatrix @C=8ex @R=6ex {
\cat{M}(A \ot B^{\mrm{op}})
\ar[r]^{ \Ga_{\catt{T} \ot B^{\mrm{op}}} }
\ar[d]_{\opn{Rest}_A}
&
\cat{M}(A \ot B^{\mrm{op}})
\ar[d]^{\opn{Rest}_A}
\\
\cat{M}(A)
\ar[r]^{ \Ga_{\catt{T}} }
&
\cat{M}(A)
} 
\end{equation}
is commutative. Moreover, for every $M \in \cat{M}(A \ot B^{\mrm{op}})$ 
there is equality 
\begin{equation} \label{eqn:1475}
\opn{Rest}_A(\si_M) = \si_{\opn{Rest}_A(M)}
\end{equation}
of homomorphisms 
\[ \opn{Rest}_A(\Ga_{\catt{T} \ot B^{\mrm{op}}}(M)) \to \opn{Rest}_A(M) \]
in $\cat{M}(A)$, because they are both the inclusion of the $\K$-module 
$\Ga_{\catt{T}}(M)$ into the $\K$-module $M$. 

In case $B = \K$, so that $A \ot B^{\mrm{op}} = A$ and 
$\catt{T} \ot B^{\mrm{op}} = \catt{T}$, we are back in the situation studied 
in Section \ref{sec:tors-cls}.  

Here is a definition resembling Definition \ref{dfn:1130}. 

\begin{dfn} \label{dfn:1385}
A complex $I \in \cat{C}(A \ot B^{\mrm{op}})$ is called {\em $\catt{T}$-flasque 
over $A$} 
if $\opn{Rest}_A(I) \in \cat{C}(A)$ is $\catt{T}$-flasque, in the sense of 
Definition \ref{dfn:1038}(2).
\end{dfn}

The next two lemmas can be deduced from standard properties of right derived 
functors. Still, we shall go through the details, because they will serve as 
references in subsequent constructions. 

\begin{lem}  \label{lem:1053}
The functor $\Ga_{\catt{T} \ot B^{\mrm{op}}}$
has a right derived functor
\[ (\mrm{R} \Ga_{\catt{T} \ot B^{\mrm{op}}}, \xi^{\mrm{R}}) : 
\cat{D}(A \ot B^{\mrm{op}}) \to \cat{D}(A \ot B^{\mrm{op}}) . \]
If $M \in \cat{D}(A \ot B^{\mrm{op}})$ is a $\catt{T}$-flasque complex over 
$A$, 
then 
\[ \xi^{\mrm{R}}_M : \Ga_{\catt{T} \ot B^{\mrm{op}}}(M) \to 
\mrm{R}\Ga_{\catt{T} \ot B^{\mrm{op}}}(M) \]
is an isomorphism.
\end{lem}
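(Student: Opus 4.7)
The plan is to construct the derived functor by the standard K-injective resolution machinery, and then to verify the flasque acyclicity criterion by restricting to $\cat{D}(A)$ and invoking the fact established in Section \ref{sec:tors-cls}.

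First, I would construct $(\mrm{R} \Ga_{\catt{T} \ot B^{\mrm{op}}}, \xi^{\mrm{R}})$ by the usual K-injective recipe. For each $M \in \cat{C}(A \ot B^{\mrm{op}})$, choose a quasi-isomorphism $\ze_M : M \to I_M$ into a K-injective complex $I_M \in \cat{C}(A \ot B^{\mrm{op}})$; this is possible since $\cat{M}(A \ot B^{\mrm{op}})$ is a Grothendieck abelian category. Put $\mrm{R} \Ga_{\catt{T} \ot B^{\mrm{op}}}(M) := \Ga_{\catt{T} \ot B^{\mrm{op}}}(I_M)$ and $\xi^{\mrm{R}}_M := \Ga_{\catt{T} \ot B^{\mrm{op}}}(\ze_M)$. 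The standard argument (see e.g.\ \cite[Section 8]{Ye6}) shows that this pair has the universal property of a right derived functor, and that up to canonical isomorphism it does not depend on the choice of resolutions.

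Second, I would prove the acyclicity assertion. Suppose $M \in \cat{D}(A \ot B^{\mrm{op}})$ is $\catt{T}$-flasque over $A$, in the sense of Definition \ref{dfn:1385}. By Proposition \ref{prop:1071} (conservativity of $\opn{Rest}_A$), it suffices to show that $\opn{Rest}_A(\xi^{\mrm{R}}_M)$ is an isomorphism in $\cat{D}(A)$. Using the commutativity of diagram \tup{(\ref{eqn:1250})} together with \tup{(\ref{eqn:1475})}, we have an equality of morphisms
\[ \opn{Rest}_A(\xi^{\mrm{R}}_M) = \opn{Rest}_A \bigl( \Ga_{\catt{T} \ot B^{\mrm{op}}}(\ze_M) \bigr) = \Ga_{\catt{T}}(\opn{Rest}_A(\ze_M)) \]
in $\cat{C}(A)$. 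Now the key point is that, because $B^{\mrm{op}}$ is flat over $\K$, Lemma \ref{lem:1070}(2) says that $\opn{Rest}_A(I_M)$ is K-injective in $\cat{C}(A)$. Hence $\opn{Rest}_A(\ze_M) : \opn{Rest}_A(M) \to \opn{Rest}_A(I_M)$ is a K-injective resolution in $\cat{C}(A)$, and by the construction recalled in Section \ref{sec:cpct-funcs} the morphism $\Ga_{\catt{T}}(\opn{Rest}_A(\ze_M))$ represents $\xi^{\mrm{R}}_{\opn{Rest}_A(M)}$ in $\cat{D}(A)$. By Definitions \ref{dfn:1038} and \ref{dfn:1385}, the $\catt{T}$-flasque hypothesis on $M$ over $A$ is precisely the statement that this morphism is an isomorphism in $\cat{D}(A)$.

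This is mostly routine derived-functor bookkeeping; the only real content is the transfer of K-injectivity from $A \ot B^{\mrm{op}}$-resolutions to $A$-resolutions, which is exactly why the flatness of $B$ (Convention \ref{conv:1445}) enters at this point via Lemma \ref{lem:1070}(2). No separate obstacle arises.
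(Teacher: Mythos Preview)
Your proposal is correct and follows essentially the same approach as the paper: construct $\mrm{R}\Ga_{\catt{T}\ot B^{\mrm{op}}}$ via K-injective resolutions in $\cat{C}(A\ot B^{\mrm{op}})$, use Lemma~\ref{lem:1070}(2) (flatness of $B$) to see that $\opn{Rest}_A(I_M)$ is K-injective and hence $\catt{T}$-flasque, and conclude that $\Ga_{\catt{T}\ot B^{\mrm{op}}}(\ze_M)$ is a quasi-isomorphism when $M$ is $\catt{T}$-flasque over $A$. The paper states this last step directly, whereas you spell it out via conservativity of $\opn{Rest}_A$ and diagram~(\ref{eqn:1250}); this is the same argument made explicit. (Minor remark: the reference to~(\ref{eqn:1475}) is not needed here---that equation concerns $\si$, while all you use is the commutativity of~(\ref{eqn:1250}).)
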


\begin{proof}
For each $M \in \cat{C}(A \ot B^{\mrm{op}})$ 
we choose a K-injective resolution $\ze_M : M \to I_M$
in $\cat{C}(A \ot B^{\mrm{op}})$.
By Lemma \ref{lem:1070} the object 
$\opn{Rest}_A(I_M) \in \cat{C}(A)$ is K-injective, and hence it is 
$\catt{T}$-flasque. We define
\[ \mrm{R} \Ga_{\catt{T} \ot B^{\mrm{op}}}(M) := 
\Ga_{\catt{T} \ot B^{\mrm{op}}}(I_M) \]
and 
\[ \xi^{\mrm{R}}_M := \Ga_{\catt{T} \ot B^{\mrm{op}}}(\ze_M) : 
\Ga_{\catt{T} \ot B^{\mrm{op}}}(M) \to
\mrm{R} \Ga_{\catt{T} \ot B^{\mrm{op}}}(M) . \]

If $M$ is a $\catt{T}$-flasque complex over $A$, then the 
homomorphism
\[ \Ga_{\catt{T} \ot B^{\mrm{op}}}(\ze_M) : \Ga_{\catt{T} \ot B^{\mrm{op}}}(M) 
\to 
\Ga_{\catt{T} \ot B^{\mrm{op}}}(I_M) \]
in $\cat{C}(A \ot B)$ is a quasi-isomorphism, and hence $\xi^{\mrm{R}}_M$ is an 
isomorphism. 
\end{proof}

Here is the bimodule version of Lemma \ref{lem:1063}.

\begin{lem}  \label{lem:1082}
Consider the triangulated functor
$(\mrm{R} \Ga_{\catt{T} \ot B^{\mrm{op}}}, \xi^{\mrm{R}})$ from Lemma 
\tup{\ref{lem:1053}}.
There is a unique morphism
\[ \si^{\mrm{R}} : \mrm{R} \Ga_{\catt{T} \ot B^{\mrm{op}}} \to \opn{Id} \]
of triangulated functors from $\cat{D}(A \ot B^{\mrm{op}})$ to itself, 
satisfying this condition\tup{:}
for every $M \in \cat{D}(A \ot B^{\mrm{op}})$ the diagram 
\[ \UseTips \xymatrix @C=6ex @R=6ex {
\Ga_{\catt{T} \ot B^{\mrm{op}}}(M)
\ar[r]^{ \xi^{\mrm{R}}_M }
\ar[dr]_{ \si_M }
&
\mrm{R} \Ga_{\catt{T} \ot B^{\mrm{op}}}(M)
\ar[d]^{ \si^{\mrm{R}}_M }
\\
&
M
} \]
in $\cat{D}(A \ot B^{\mrm{op}})$ is commutative. 
\end{lem}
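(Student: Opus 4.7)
The plan is to mirror, essentially verbatim, the proof of Lemma \ref{lem:1063}, since Lemma \ref{lem:1082} is its formal bimodule analogue. The ambient category changes from $\cat{M}(A)$ to $\cat{M}(A \ot B^{\mrm{op}})$, and the copointed additive functor becomes $(\Ga_{\catt{T} \ot B^{\mrm{op}}}, \si)$ from formulas \tup{(\ref{eqn:1050})} and \tup{(\ref{eqn:1080})}, but no new ingredients are required beyond those already set up in the paper.

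First I would reuse the explicit construction of $(\mrm{R} \Ga_{\catt{T} \ot B^{\mrm{op}}}, \xi^{\mrm{R}})$ from the proof of Lemma \ref{lem:1053}: for each $M \in \cat{C}(A \ot B^{\mrm{op}})$ there is a fixed K-injective resolution $\ze_M : M \to I_M$, and $\mrm{R} \Ga_{\catt{T} \ot B^{\mrm{op}}}(M) = \Ga_{\catt{T} \ot B^{\mrm{op}}}(I_M)$. Since $I_M$ is K-injective over $A \ot B^{\mrm{op}}$, Lemma \ref{lem:1070}(2) tells us it is K-injective over $A$, hence $\catt{T}$-flasque over $A$, so by Lemma \ref{lem:1053} the morphism $\xi^{\mrm{R}}_{I_M}$ is an isomorphism in $\cat{D}(A \ot B^{\mrm{op}})$.

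For K-injective $I$, I would then define
\[ \si^{\mrm{R}}_I := \si_I \circ (\xi^{\mrm{R}}_I)^{-1}, \]
and for general $M$ I would set
\[ \si^{\mrm{R}}_M := \ze_M^{-1} \circ \si^{\mrm{R}}_{I_M} \circ
\mrm{R}\Ga_{\catt{T} \ot B^{\mrm{op}}}(\ze_M) , \]
exactly as in the diagram used in the proof of Lemma \ref{lem:1063}. The defining identity $\si^{\mrm{R}}_M \circ \xi^{\mrm{R}}_M = \si_M$ is verified on $I_M$ directly, and then on $M$ by composing with the quasi-isomorphism $\ze_M$ and using naturality of $\si$ and $\xi^{\mrm{R}}$. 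Functoriality in $M$ and compatibility with translation follow from the corresponding properties of $\si$ and of $(\mrm{R} \Ga_{\catt{T} \ot B^{\mrm{op}}}, \xi^{\mrm{R}})$; the triangulated structure is preserved because K-injective complexes form a triangulated subcategory on which $\xi^{\mrm{R}}$ is an isomorphism, so distinguished triangles are sent to distinguished triangles.

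Uniqueness is the cleanest step: any morphism $\si^{\mrm{R}} : \mrm{R} \Ga_{\catt{T} \ot B^{\mrm{op}}} \to \opn{Id}$ satisfying the required equation is forced on K-injective complexes by the formula $\si^{\mrm{R}}_I = \si_I \circ (\xi^{\mrm{R}}_I)^{-1}$, and then forced on arbitrary $M$ by naturality with respect to a K-injective resolution. The only real obstacle is the bookkeeping: checking that the construction is well-defined independently of the choice of resolution, but this is exactly the universal property of the right derived functor $(\mrm{R} \Ga_{\catt{T} \ot B^{\mrm{op}}}, \xi^{\mrm{R}})$ applied to the morphism $\si : \Ga_{\catt{T} \ot B^{\mrm{op}}} \to \opn{Id}$, viewed as a morphism into the trivially derived identity functor. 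Thus the proof should be at most a half-page, and can reasonably be abbreviated by referring to Lemma \ref{lem:1063}.
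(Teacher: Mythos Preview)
Your proposal is correct and matches the paper's own proof essentially verbatim: the paper defines $\si^{\mrm{R}}_{I_M} := \si_{I_M} \circ (\xi^{\mrm{R}}_{I_M})^{-1}$ on the chosen K-injective resolutions from Lemma~\ref{lem:1053} and then $\si^{\mrm{R}}_{M} := \ze_M^{-1} \circ \si^{\mrm{R}}_{I_M} \circ \mrm{R} \Ga_{\catt{T} \ot B^{\mrm{op}}}(\ze_M)$, with uniqueness argued by the forced formula on $\catt{T}$-flasque complexes. The only cosmetic difference is that the paper states uniqueness first and phrases it for arbitrary $\catt{T}$-flasque complexes over $A$ rather than just K-injective ones, but this changes nothing.
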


\begin{proof}
When $I$ is $\catt{T}$-flasque over $A$ the morphism $\xi^{\mrm{R}}_I$ is an 
isomorphism, and so the morphism 
$\si^{\mrm{R}}_I : \mrm{R} \Ga_{\catt{T} \ot B^{\mrm{op}}}(I) \to I$ 
must be $\si^{\mrm{R}}_I = \si_I \circ (\xi^{\mrm{R}}_I)^{-1}$. 
This implies the uniqueness of the morphism of functors $\si^{\mrm{R}}$. 

For existence we use the chosen K-injective resolutions $\ze_M : M \to I_M$ 
from the proof of Lemma \ref{lem:1053}, and define
\[ \si^{\mrm{R}}_{I_M} := \si_{I_M} \circ (\xi^{\mrm{R}}_{I_M})^{-1} \]
and
\[ \si^{\mrm{R}}_{M} := \ze_M^{-1} \circ  \si^{\mrm{R}}_{I_M} \circ 
\mrm{R} \Ga_{\catt{T} \ot B^{\mrm{op}}}(\ze_M) . \]
\end{proof}

We now have a triangulated copointed functor 
$(\mrm{R} \Ga_{\catt{T} \ot B^{\mrm{op}}}, \si^{\mrm{R}})$ on the category \lb 
$\cat{D}(A \ot B^{\mrm{op}})$.

\begin{lem}  \label{lem:1095}

Consider the triangulated copointed functors 
$(\mrm{R} \Ga_{\catt{T}}, \si^{\mrm{R}})$
and \lb 
$(\mrm{R} \Ga_{\catt{T} \ot B^{\mrm{op}}}, \si^{\mrm{R}})$
on the categories $\cat{D}(A)$ and $\cat{D}(A \ot B^{\mrm{op}})$ respectively. 
There is an isomorphism 
\[ \eta: \opn{Rest}_A \circ \, \mrm{R} \Ga_{\catt{T} \ot B^{\mrm{op}}} \iso 
\mrm{R} \Ga_{\catt{T}} \circ \opn{Rest}_A \]
of triangulated functors 
$\cat{D}(A \ot B^{\mrm{op}}) \to \cat{D}(A)$,
such that for every object 
$M \in \lb \cat{D}(A \ot B^{\mrm{op}})$
there is equality  
\[ \tag{$\diamond$} 
\opn{Rest}_A(\si^{\mrm{R}}_M) = 
\si^{\mrm{R}}_{\opn{Rest}_A(M)} \circ \eta_M \]
of morphisms 
\[ \opn{Rest}_A(\mrm{R} \Ga_{\catt{T} \ot B^{\mrm{op}}}(M)) \to 
\opn{Rest}_A(M) \]
in $\cat{D}(A)$.
\end{lem}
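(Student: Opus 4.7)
The plan is to construct $\eta_M$ directly from the K-injective resolutions $\ze_M : M \to I_M$ chosen in the proof of Lemma \ref{lem:1053}, exploiting the fact that $\opn{Rest}_A$ is exact and preserves K-injectivity. Indeed, by Lemma \ref{lem:1070}(2) (applied with the flat ring being $B^{\mrm{op}}$), each $\opn{Rest}_A(I_M)$ is K-injective in $\cat{C}(A)$, and in particular $\catt{T}$-flasque. Combining this with the strict commutativity of diagram (\ref{eqn:1250}) at the underived level produces an equality of complexes
\[ \opn{Rest}_A \bigl( \mrm{R} \Ga_{\catt{T} \ot B^{\mrm{op}}}(M) \bigr)
= \opn{Rest}_A \bigl( \Ga_{\catt{T} \ot B^{\mrm{op}}}(I_M) \bigr)
= \Ga_{\catt{T}} \bigl( \opn{Rest}_A(I_M) \bigr) . \]
On the other side, $\opn{Rest}_A(\ze_M) : \opn{Rest}_A(M) \to \opn{Rest}_A(I_M)$ is a quasi-isomorphism into a $\catt{T}$-flasque complex, and Lemma \ref{lem:1053} yields a canonical isomorphism $\mrm{R} \Ga_{\catt{T}}(\opn{Rest}_A(M)) \iso \Ga_{\catt{T}}(\opn{Rest}_A(I_M))$ in $\cat{D}(A)$. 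We define $\eta_M$ to be the composition of the above equality with the inverse of this isomorphism.

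To promote $\eta$ to a morphism of triangulated functors, one checks naturality and compatibility with translations and distinguished triangles. Naturality is standard: any morphism $\phi : M \to N$ in $\cat{D}(A \ot B^{\mrm{op}})$ can be represented, after resolution, by an actual homomorphism $\til{\phi} : I_M \to I_N$ in $\cat{C}(A \ot B^{\mrm{op}})$, unique up to homotopy; the restriction $\opn{Rest}_A(\til{\phi})$ represents $\opn{Rest}_A(\phi)$, and the relevant square commutes by functoriality of $\Ga_{\catt{T}}$. Compatibility with translation and with cones follows from the same remark, since K-injective resolutions of $M[1]$ and of cones of morphisms can be constructed from the chosen ones.

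The verification of the compatibility condition ($\diamond$) is then a matter of unwinding definitions. By Lemma \ref{lem:1082}, on a K-injective object we have $\si^{\mrm{R}}_{I_M} = \si_{I_M} \circ (\xi^{\mrm{R}}_{I_M})^{-1}$ on both sides of ($\diamond$); the key input is equation (\ref{eqn:1475}), which says that $\opn{Rest}_A(\si_{I_M}) = \si_{\opn{Rest}_A(I_M)}$. Propagating this through the defining formula $\si^{\mrm{R}}_M = \ze_M^{-1} \circ \si^{\mrm{R}}_{I_M} \circ \mrm{R} \Ga_{\catt{T} \ot B^{\mrm{op}}}(\ze_M)$, and observing that $\eta_M$ is assembled from exactly the same resolution data, yields the equality ($\diamond$).

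The main (and essentially only) obstacle here is diagrammatic bookkeeping: no genuinely new input is required beyond the exactness of $\opn{Rest}_A$ together with Lemma \ref{lem:1070}(2). It is worth noting the conceptual reason the lemma must hold: both $\opn{Rest}_A \circ \mrm{R}\Ga_{\catt{T} \ot B^{\mrm{op}}}$ and $\mrm{R}\Ga_{\catt{T}} \circ \opn{Rest}_A$ are right derived functors of the single functor $\opn{Rest}_A \circ \Ga_{\catt{T} \ot B^{\mrm{op}}} = \Ga_{\catt{T}} \circ \opn{Rest}_A$, so the universal property of right derived functors produces $\eta$ uniquely and canonically; the explicit construction above is merely the concrete realization of that universal isomorphism on the chosen resolutions.
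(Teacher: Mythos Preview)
Your proof is correct and follows essentially the same approach as the paper's. The only cosmetic difference is that the paper explicitly introduces a second system of K-injective resolutions $\th_N : N \to J_N$ in $\cat{C}(A)$ together with a comparison quasi-isomorphism $\de_M : \opn{Rest}_A(I_M) \to J_{\opn{Rest}_A(M)}$, and then sets $\eta_M := \Ga_{\catt{T}}(\de_M)$; you bypass this by observing directly that $\opn{Rest}_A(I_M)$ is already K-injective in $\cat{C}(A)$ and invoking the $\xi^{\mrm{R}}$ isomorphism from Lemma~\ref{lem:1053}, which amounts to the same comparison packaged slightly more economically.
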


\begin{proof}
We use the K-injective resolutions $\ze_M : M \to I_M$
in $\cat{C}(A \ot B^{\mrm{op}})$ from the proof of Lemma \ref{lem:1053}
to present the functor 
$\mrm{R} \Ga_{\catt{T} \ot B^{\mrm{op}}}$.
Let us also choose a system of K-injective resolutions 
$\th_N : N \to J_N$ in the category $\cat{C}(A)$, and use that to 
present the functor $\mrm{R} \Ga_{\catt{T}}$.

For every complex $M \in \cat{C}(A \ot B^{\mrm{op}})$,
with image $N := \opn{Rest}_A(M) \in \cat{C}(A)$,
there exists a quasi-isomorphism 
$\de_M : \opn{Rest}_A(I_M) \to J_N$
in $\cat{C}(A)$, unique up to homotopy, such that the diagram
\[ \UseTips \xymatrix @C=12ex @R=6ex {
\opn{Rest}_A(M)
\ar[r]^{\opn{Rest}_A(\ze_M)}
\ar[d]_{\opn{id}}
&
\opn{Rest}_A(I_M)
\ar[d]^{\de_M}
\\
N
\ar[r]^{\th_N}
&
J_N
} \]
is commutative up to homotopy. 
Because both $\opn{Rest}_A(I_M)$ and $J_N$ are $\catt{T}$-flasque over $A$,
the homomorphism 
\[ \eta_M := \Ga_{\catt{T}}(\de_M) : \Ga_{\catt{T}}(\opn{Rest}_A(I_M)) \to 
\Ga_{\catt{T}}(J_N) \]
in $\cat{C}(A)$ is a quasi-isomorphism. The commutativity of diagram 
(\ref{eqn:1250}) implies that 
\[ \Ga_{\catt{T}}(\opn{Rest}_A(I_M)) =
\opn{Rest}_A(\Ga_{\catt{T} \ot B^{\mrm{op}}}(I_M))  \]
as submodules of $\opn{Rest}_A(I_M)$. 
Thus, as $M$ varies, we obtain an isomorphism of triangulated functors 
\[ \eta : \opn{Rest}_A \circ \, \mrm{R} \Ga_{\catt{T} \ot B^{\mrm{op}}} \iso 
\mrm{R} \Ga_{\catt{T}} \circ \opn{Rest}_A . \]

Finally, formula (\ref{eqn:1475}) says that
$\opn{Rest}_A(\si_{I_M}) = \si_{\opn{Rest}_A(I_M)}$
as homomorphisms 
\[ \opn{Rest}_A(\Ga_{\catt{T} \ot B^{\mrm{op}}}(I_M)) \to \opn{Rest}_A(I_M) \] 
in $\cat{C}(A)$. 
The construction of $\si^{\mrm{R}}_M$ in the proof of Lemma \ref{lem:1082}
shows that equality ($\diamond$) holds. 
\end{proof}

In view of Lemma \ref{lem:1095}, there is no harm to stop using the longhand
notation $\catt{T} \ot B^{\mrm{op}}$ for the bimodule torsion class in 
$\cat{M}(A \ot B^{\mrm{op}})$. From here on we shall mostly use the 
notation $\catt{T}$ to denote both the original left module torsion class and 
the extended bimodule torsion class. Thus we shall usually write 
$\mrm{R} \Ga_{\catt{T}}$ for 
the derived torsion functors on $\cat{D}(A)$ and on 
$\cat{D}(A \ot B^{\mrm{op}})$, and the context will determine which category 
is involved. This simplified notation will be especially helpful because the 
ring $B$ will be varying. 

For the next definition we take $B = A$, so that 
$A \ot B^{\mrm{op}} = A^{\mrm{en}}$ and, in the longhand notation, the bimodule 
torsion class is 
$\catt{T} \ot A^{\mrm{op}} \sub \cat{M}(A^{\mrm{en}})$.

The category $\cat{D}(A^{\mrm{en}})$ carries a monoidal structure -- 
see Proposition \ref{prop:1425}. Copointed objects in monoidal categories
were introduced in Definition \ref{dfn:1185}.
 
\begin{dfn} \label{dfn:1200}
Consider the copointed triangulated functor 
$(\mrm{R} \Ga_{\catt{T}}, \si^{\mrm{R}})$ on \lb 
$\cat{D}(A^{\mrm{en}})$.
Define the object 
\[ P := \mrm{R} \Ga_{\catt{T}}(A) \in \cat{D}(A^{\mrm{en}}) \]
and the morphism
\[ \rho : P \to A , \quad \rho := \si^{\mrm{R}}_A \]
in $\cat{D}(A^{\mrm{en}})$. 
We call the pair $(P, \rho)$ the {\em copointed object in 
$\cat{D}(A^{\mrm{en}})$ induced by $\catt{T}$}. 
\end{dfn}

Recall that Conventions \ref{conv:1425} and \ref{conv:1445} are assumed.
In the next theorem, $B$ is an arbitrary (flat central) $\K$-ring; it could, 
for instance, be $A$ or $\K$.
By Proposition \ref{prop:1425} there is a left monoidal action
$(- \ot_A^{\mrm{L}} -)$ of 
$\cat{D}(A^{\mrm{en}})$ on $\cat{D}(A \ot B^{\mrm{op}})$.

\begin{thm}[Representability of Derived Torsion] \label{thm:1200}
Let $A$ and $B$ be flat rings. Let $\catt{T}$ be a quasi-compact, 
finite dimensional, weakly stable torsion class in $\cat{M}(A)$, and let
$(P, \rho)$ be the copointed object in $\cat{D}(A^{\mrm{en}})$ 
from Definition \tup{\ref{dfn:1200}}. There is a unique isomorphism 
\[ \ga : P \ot_{A}^{\mrm{L}} (-) \iso \mrm{R} \Ga_{\catt{T}} \]
of triangulated functors from $\cat{D}(A \ot B^{\mrm{op}})$ to itself, 
such that for every complex $M$ the diagram 
\[ \UseTips \xymatrix @C=8ex @R=6ex {
P \ot_{A}^{\mrm{L}} M
\ar[r]^{\ga_{M}}
\ar[d]_{ \rho \, \ot_A^{\mrm{L}} \, \opn{id}_M }
&
\mrm{R} \Ga_{\catt{T}}(M)
\ar[d]^{ \si^{\mrm{R}}_M }
\\
A \ot^{\mrm{L}}_{A} M
\ar[r]^{ \opn{lu} }
&
M
} \]
in $\cat{D}(A \ot B^{\mrm{op}})$ is commutative. 
\end{thm}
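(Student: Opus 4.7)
The plan is to construct $\ga$ using the factorization of any morphism with derived-torsion source through $\si^{\mrm{R}}$, and then to show $\ga$ is an isomorphism via a cone-vanishing argument combined with a dévissage on $M$. The main preliminary step is to verify that $P \ot_A^{\mrm{L}} M$ is derived $\catt{T}$-torsion for every $M$. Since $\catt{T}$ is finite dimensional the complex $P = \mrm{R}\Ga_{\catt{T}}(A)$ is bounded with each $\opn{H}^p(P) \in \catt{T}$; a stupid-truncation dévissage on $P$ (combined with the fact that $\opn{Tor}^A_i(L, -) \in \catt{T}$ whenever $L \in \catt{T}$, via a free resolution and closure of $\catt{T}$ under subquotients of direct sums) then shows that the cohomologies of $P \ot_A^{\mrm{L}} M$ all lie in $\catt{T}$ as left $A$-modules. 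Applying Theorem \ref{thm:2031} to $\opn{Rest}_A(P \ot_A^{\mrm{L}} M)$ and using Lemma \ref{lem:1095} together with the conservativity of restriction (Proposition \ref{prop:1071}), the morphism $\si^{\mrm{R}}_{P \ot_A^{\mrm{L}} M}$ is then an isomorphism in $\cat{D}(A \ot B^{\mrm{op}})$.

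With this in hand I define
\[ \ga_M := \mrm{R}\Ga_{\catt{T}}\bigl(\opn{lu} \circ (\rho \ot_A^{\mrm{L}} \opn{id}_M)\bigr) \circ \bigl(\si^{\mrm{R}}_{P \ot_A^{\mrm{L}} M}\bigr)^{-1} . \]
The commutativity of the diagram in the theorem is precisely the naturality square of $\si^{\mrm{R}}$ applied to the morphism $\opn{lu} \circ (\rho \ot_A^{\mrm{L}} \opn{id}_M)$, and naturality of $\ga$ in $M$ is inherited from its constituents. Uniqueness of such $\ga$ follows by applying $\mrm{R}\Ga_{\catt{T}}$ to the defining equality and using that $\mrm{R}\Ga_{\catt{T}}(\si^{\mrm{R}}_M) = \si^{\mrm{R}}_{\mrm{R}\Ga_{\catt{T}}(M)}$ is an isomorphism by the idempotence of $(\mrm{R}\Ga_{\catt{T}}, \si^{\mrm{R}})$ from Theorem \ref{thm:1080}, together with the fact that the source $P \ot_A^{\mrm{L}} M$ is itself derived $\catt{T}$-torsion.

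To show $\ga_M$ is an isomorphism it suffices to prove that $\mrm{R}\Ga_{\catt{T}}$ carries $\opn{lu} \circ (\rho \ot_A^{\mrm{L}} \opn{id}_M)$ to an isomorphism. I embed $\rho : P \to A$ in a distinguished triangle $P \xar{\rho} A \to N \xar{\vartriangle}$ in $\cat{D}(A^{\mrm{en}})$. Since $\rho = \si^{\mrm{R}}_A$, idempotence of $(\mrm{R}\Ga_{\catt{T}}, \si^{\mrm{R}})$ (Theorem \ref{thm:1080}) forces $\mrm{R}\Ga_{\catt{T}}(\rho) = \si^{\mrm{R}}_P$ to be an isomorphism, so $\mrm{R}\Ga_{\catt{T}}(N) = 0$. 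Tensoring this triangle with $M$ over $A$ and applying $\mrm{R}\Ga_{\catt{T}}$ reduces the task to proving $\mrm{R}\Ga_{\catt{T}}(N \ot_A^{\mrm{L}} M) = 0$ for every $M \in \cat{D}(A \ot B^{\mrm{op}})$.

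The main obstacle is this last vanishing. The plan is a dévissage: the class of $M$ for which $\mrm{R}\Ga_{\catt{T}}(N \ot_A^{\mrm{L}} M) = 0$ is closed under shifts, triangles, and arbitrary direct sums (the latter because $\mrm{R}\Ga_{\catt{T}}$ is quasi-compact by Theorem \ref{thm:1080} and $N \ot_A^{\mrm{L}} -$ is quasi-compact by Proposition \ref{prop:1042}), so it is a localizing subcategory of $\cat{D}(A \ot B^{\mrm{op}})$. Since $A \ot B^{\mrm{op}}$ is a compact generator of $\cat{D}(A \ot B^{\mrm{op}})$, it suffices to verify the vanishing at $M = A \ot B^{\mrm{op}}$, where $\K$-flatness of $B^{\mrm{op}}$ gives $N \ot_A^{\mrm{L}} (A \ot B^{\mrm{op}}) \cong N \ot B^{\mrm{op}}$. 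The delicate remaining step is to commute $\mrm{R}\Ga_{\catt{T}}$ past the flat base change $(-) \ot B^{\mrm{op}}$, obtaining $\mrm{R}\Ga_{\catt{T}}(N \ot B^{\mrm{op}}) \cong \mrm{R}\Ga_{\catt{T}}(N) \ot B^{\mrm{op}} = 0$; this will use the quasi-compactness of $\catt{T}$ together with the flatness of $B$ over $\K$.
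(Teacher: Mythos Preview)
Your argument is correct, and the strategy is genuinely different from the paper's. The paper constructs $\ga_M$ by hand: it fixes a K-injective resolution $A \to J$ in $\cat{C}(A^{\mrm{en}})$ and K-flat/K-injective resolutions $Q_M \to M \to I_M$ in $\cat{C}(A \ot B^{\mrm{op}})$, writes down the explicit map $\til{\ga}_N : \Ga_{\catt{T}}(J) \ot_A N \to \Ga_{\catt{T}}(J \ot_A N)$, and chases a concrete diagram. Your construction is more conceptual: you first prove that $P \ot_A^{\mrm{L}} M$ is derived $\catt{T}$-torsion for every $M$ (via d\'evissage on the bounded complex $P$), which lets you invert $\si^{\mrm{R}}_{P \ot_A^{\mrm{L}} M}$ and then \emph{define} $\ga_M$ by the universal property. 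This is cleaner and also makes the uniqueness claim transparent (the paper's proof does not spell out uniqueness).

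For the isomorphism step, both proofs are d\'evissage to a generator, but the order differs. The paper first uses conservativity of $\opn{Rest}_A$ and Lemma \ref{lem:1095} to reduce to the case $B = \K$, and only then invokes the localizing-subcategory argument (their \cite[Lemma 4.1]{PSY2}) to reduce to $M = A$, where the check is immediate. You run the d\'evissage directly in $\cat{D}(A \ot B^{\mrm{op}})$, reducing to $M = A \ot B^{\mrm{op}}$, and are then left with the ``delicate'' flat base-change step $\mrm{R}\Ga_{\catt{T}}(N \ot B^{\mrm{op}}) \cong \mrm{R}\Ga_{\catt{T}}(N) \ot B^{\mrm{op}}$. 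That step is provable (Lazard plus quasi-compactness of $\mrm{R}\Ga_{\catt{T}}$ from Theorem \ref{thm:1080}), but it is unnecessary: if you restrict to $\cat{D}(A)$ \emph{before} the d\'evissage --- exactly as you already do elsewhere in your argument --- then the generator is $A$, the cone at $M = A$ is $N \ot_A^{\mrm{L}} A \cong N$, and $\mrm{R}\Ga_{\catt{T}}(N) = 0$ is what you already established. Swapping the order eliminates the only loose end in your write-up.
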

 
\begin{proof}
We begin by constructing the morphism of triangulated functors $\ga$.
For each complex $M \in \cat{C}(A \ot B^{\mrm{op}})$
we choose a K-injective resolution 
$\ze_M : M \to I_M$,
and a K-flat resolution 
$\th_M : Q_M \to M$,
both in $\cat{C}(A \ot B^{\mrm{op}})$.
Note that $I_M$ is $\catt{T}$-flasque over $A$, and $Q_M$ is K-flat over 
$A$. We use these choices for presentations of the right derived functor 
\[ (\mrm{R} \Ga_{\catt{T}}, \xi^{\mrm{R}}) : \cat{D}(A \ot B^{\mrm{op}}) \to 
\cat{D}(A \ot B^{\mrm{op}}) \] 
and the left derived bifunctor 
\[ \bigl( (- \ot_{A}^{\mrm{L}} -), \xi^{\mrm{L}} \bigr) : 
\cat{D}(A^{\mrm{en}}) \times \cat{D}(A \ot B) \to \cat{D}(A \ot B^{\mrm{op}}) . 
\]
Let us also choose a K-injective resolution 
$\de : A \to J$ in $\cat{C}(A^{\mrm{en}})$. 
With these choices we have the following presentations:
$P = \Ga_{\catt{T}}(J)$,
$\mrm{R} \Ga_{\catt{T}}(M) = \Ga_{\catt{T}}(I_M)$
and
$P \ot_A^{\mrm{L}} M = \Ga_{\catt{T}}(J) \ot_{A} Q_M$.

Let $N \in \cat{C}(A \ot B^{\mrm{op}})$. Given homogeneous elements 
$x \in \Ga_{\catt{T}}(J)$ and $n \in N$, the tensor 
$x \ot n$ belongs to $\Ga_{\catt{T}}(J \ot_{A} N)$;
cf.\ formula (\ref{eqn:1050}). 
In this way we obtain a homomorphism 
\begin{equation} \label{eqn:1370}
\til{\ga}_N : \Ga_{\catt{T}}(J) \ot_{A} N \to 
\Ga_{\catt{T}}(J \ot_{A} N) 
\end{equation}
in $\cat{C}(A \ot B^{\mrm{op}})$, 
which is functorial in $N$. 

Consider a complex $M \in \cat{C}(A \ot B^{\mrm{op}})$. We have the solid 
diagram 
\begin{equation} \label{eqn:1200}
\UseTips \xymatrix @C=6ex @R=6ex {
A \ot_{A} Q_M
\ar[r]^(0.6){\opn{lu}}
\ar[d]_{ \de \, \ot_A \, \opn{id} }
&
Q_M
\ar[r]^{\th_{M}}
&
M
\ar[d]^{ \ze_M }
\\
J \ot_{A} Q_M 
\ar@{-->}[rr]^(0.6){\chi_M}
&
&
I_M
} 
\end{equation}
in $\cat{C}(A \ot B^{\mrm{op}})$.
The homomorphisms $\de \, \ot_A \, \opn{id}$, $\th_{M} \circ \opn{lu}$  and
$\ze_M$ in  $\cat{C}(A \ot B^{\mrm{op}})$ are quasi-isomorphisms.
Because $I_M$ is K-injective in $\cat{C}(A \ot B^{\mrm{op}})$, 
there is a quasi-isomorphism
\begin{equation} \label{eqn:1361}
\chi_M : J \ot_{A} Q_M \to I_M 
\end{equation}
that makes this diagram commutative up to homotopy. 

We now form the following diagram 
\begin{equation} \label{eqn:1355}
 \UseTips \xymatrix @C=10ex @R=8ex {
\Ga_{\catt{T}}(J) \ot_{A} Q_M
\ar[r]^{\til{\ga}_{Q_M} }
\ar[d]_{ \si_{J} \, \ot_A \, \opn{id} }
&
\Ga_{\catt{T}}(J \ot_{A} Q_M)
\ar[r]^(0.55){ \Ga_{\catt{T}}(\chi_M) } 
\ar[d]_{ \si_{J \ot_{A} Q_M} }
&
\Ga_{\catt{T}}(I_M)
\ar[d]_{ \si_{I_M} }
\\
J \ot_{A} Q_M
\ar[r]^{ \opn{id} }
&
J \ot_{A} Q_M
\ar[r]^(0.55){ \chi_M }
&
I_M
\\
A \ot_{A} Q_M 
\ar[u]^{ \de \, \ot_A \, \opn{id} }
\ar[r]^{ \opn{id} }
&
A \ot_{A} Q_M 
\ar[u]^{ \de \, \ot_A \, \opn{id} }
\ar[r]^(0.55){\th_{M} \circ \opn{lu}}
&
M
\ar[u]^{ \ze_M } 
} 
\end{equation}
in $\cat{C}(A \ot B^{\mrm{op}})$. Here $\til{\ga}_{Q_M}$ is the homomorphism 
from (\ref{eqn:1370}) with $N := Q_M$.
The diagram (\ref{eqn:1355}) is 
commutative up to homotopy. (Actually all small squares, except the bottom 
right one, are commutative in the strict sense.)
The vertical arrows $\de \, \ot_A \opn{id}$ and $\ze_M$ are 
quasi-isomorphisms. Passing to $\cat{D}(A \ot B^{\mrm{op}})$
we get a commutative diagram, with vertical isomorphisms between the second and 
third rows. The diagram with the four extreme objects only is the one we are 
looking for. By construction it is a commutative diagram
in $\cat{D}(A \ot B^{\mrm{op}})$, and it is functorial 
in $M$. The morphism 
\[ \ga_M : P \ot_A^{\mrm{L}} M \to \mrm{R} \Ga_{\catt{T}}(M) \] 
is represented by $\Ga_{\catt{T}}(\chi_M) \circ \til{\ga}_{Q_M}$. 

It remains to prove that $\ga_M$
is an isomorphism for every $M \in \cat{D}(A \ot B^{\mrm{op}})$.
Because the functor $\opn{Rest}_A$ is conservative, it suffices to prove that 
the morphism 
\[ \opn{Rest}_A(\ga_M) : \opn{Rest}_A(P \ot_A^{\mrm{L}} M) \to 
\opn{Rest}_A(\mrm{R} \Ga_{\catt{T}}(M)) \]
in $\cat{D}(A)$ is an isomorphism. Going over all the details of the 
construction above, and noting that 
$\ze_M : M \to I_M$ and $\th_M : Q_M \to M$ 
are K-flat and K-injective resolutions, respectively, also in 
$\cat{C}(A)$, we might as well forget about the ring $B$. 

Another way to justify the elimination of $B$ is this: we want to prove 
that the homomorphism 
\[ \Ga_{\catt{T}}(\chi_M) \circ \til{\ga}_{Q_M} : 
\Ga_{\catt{T}}(J) \ot_{A} Q_M \to \Ga_{\catt{T}}(I) \]
in $\cat{C}(A \ot B^{\mrm{op}})$ is a quasi-isomorphism. For that we can forget 
about the $B$-module structure. 

So now we are in the case $B = \K$, $A \ot B^{\mrm{op}} = A$ and (in longhand)
$\catt{T} \ot B^{\mrm{op}} = \catt{T}$, and we want to 
prove that $\ga_M$ is an isomorphism for every $M \in \cat{D}(A)$.
By Theorem \ref{thm:1080} the functor $\mrm{R} \Ga_{\catt{T}}$
on $\cat{D}(A)$ is quasi-compact. The functor $P \ot_{A}^{\mrm{L}} (-)$ is also
quasi-compact.  This means that we can use \cite[Lemma 4.1]{PSY2}, and it tells 
us that it suffices to prove that $\ga_M$ is an isomorphism for $M = A$. 

Let us examine the morphism $\ga_A$, i.e.\ $\ga_M$ for $M = A$. 
We can choose the K-flat resolution
$\th_A : Q_A \to A$ in $\cat{C}(A)$ to be the identity of $A$. 
Also, we can choose the K-injective resolution 
$\ze_A : A \to I_A$ in $\cat{C}(A)$ to be the restriction of 
$\de : A \to J$. Then the homomorphism 
$\chi_A : J \ot_A Q_A \to I_A$
in diagram (\ref{eqn:1200}) can be chosen to be 
$\chi_A = \opn{id} \ot_A \opn{id}$. 
We get a commutative diagram 
\[ \UseTips \xymatrix @C=10ex @R=7ex {
\Ga_{\catt{T}}(J) \ot_{A} Q_A
\ar[r]^{ \til{\ga}_{Q_A} }
\ar[d]_{ \opn{id} \ot_A \opn{id} }
&
\Ga_{\catt{T}}(J \ot_{A} Q_A)
\ar[r]^(0.6){ \Ga_{\catt{T}}(\chi_A) } 
\ar[d]_{  \Ga_{\catt{T}}(\opn{id} \ot_A \opn{id}) }
&
\Ga_{\catt{T}}(I_A)
\ar[d]_{\opn{id}}
\\
\Ga_{\catt{T}}(J) \ot_{A} A
\ar[r]^{\til{\ga}_{A}}
&
\Ga_{\catt{T}}(J \ot_{A} A)
\ar[r]^(0.6){\Ga_{\catt{T}}(\opn{ru})} 
&
\Ga_{\catt{T}}(J)
} \]
in $\cat{C}(A)$. 
The horizontal arrows in the second row, and the vertical arrows, are all
bijective. We conclude that 
$\Ga_{\catt{T}}(\chi_A) \circ \til{\ga}_{Q_A}$ is bijective. Hence 
$\ga_A$ is an isomorphism in $\cat{D}(A)$.
\end{proof}

\begin{rem} \label{rem:1460}
Theorem \ref{thm:1200} resembles \cite[Lemma 3.4]{WZ}. There the authors  
only considered the case where $A$ is a complete semilocal noetherian ring
(central over a base field $\K$), and $\catt{T}$ is torsion at the Jacobson 
radical of $A$.
\end{rem}

Again we remind that Conventions \ref{conv:1425} and \ref{conv:1445} are in 
place. The copointed object induced by a torsion class was introduced in
Definition \tup{\ref{dfn:1200}}. 

\begin{thm} \label{thm:1370}
Let $A$ be a flat ring, and let $\catt{T}$ be a quasi-compact, 
finite dimensional, weakly stable torsion class in $\cat{M}(A)$. 
Let $(P, \rho)$ be the copointed object in the monoidal category 
$\cat{D}(A^{\mrm{en}})$ that is induced by $\catt{T}$.
Then $(P, \rho)$ is an idempotent copointed object. 
\end{thm}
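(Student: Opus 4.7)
The plan is to verify both idempotence conditions of Definition \ref{dfn:1185}(2) using the representability isomorphism $\ga$ of Theorem \ref{thm:1200} (specialized to $B = A$) together with the idempotence of the derived torsion copointed functor on $\cat{D}(A^{\mrm{en}})$.

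As a first step I would establish that $(\mrm{R}\Ga_{\catt{T}}, \si^{\mrm{R}})$ is idempotent as a copointed triangulated functor on $\cat{D}(A^{\mrm{en}})$. Theorem \ref{thm:1080} furnishes the analogous statement on $\cat{D}(A)$, so the task is to transfer its three hypotheses (weak stability, finite cohomological dimension, and quasi-compactness) to the bimodule torsion class $\catt{T} \ot A^{\mrm{op}}$ acting on $\cat{M}(A^{\mrm{en}})$. Since $A$ is flat, Lemma \ref{lem:1070}(2) shows that an injective object of $\cat{M}(A^{\mrm{en}})$ restricts to a $\K$-injective (in particular, injective) left $A$-module, and the compatibility of Lemma \ref{lem:1095} combined with the conservativity of $\opn{Rest}_A$ from Proposition \ref{prop:1071} lets the three properties descend from the bimodule side to the left-module side and back. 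Applying Theorems \ref{thm:1001} and \ref{thm:1004} to $\Ga_{\catt{T} \ot A^{\mrm{op}}}$ then yields the desired bimodule idempotence, and in particular both $\si^{\mrm{R}}_P$ and $\mrm{R}\Ga_{\catt{T}}(\rho) = \mrm{R}\Ga_{\catt{T}}(\si^{\mrm{R}}_A)$ become isomorphisms in $\cat{D}(A^{\mrm{en}})$.

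Next I handle the two idempotence morphisms. For the left one, the defining commutative square of $\ga$ in Theorem \ref{thm:1200}, applied with $M = P$, reads
\[ \opn{lu} \circ (\rho \ot_A^{\mrm{L}} \opn{id}_P) = \si^{\mrm{R}}_P \circ \ga_P . \]
Since $\ga_P$ is an isomorphism by Theorem \ref{thm:1200} and $\si^{\mrm{R}}_P$ is an isomorphism by the previous paragraph, this composite is an isomorphism. For the right one, I invoke the naturality of $\ga$, viewed as a morphism of triangulated functors on $\cat{D}(A^{\mrm{en}})$, applied to the arrow $\rho : P \to A$ of $\cat{D}(A^{\mrm{en}})$, obtaining
\[ \ga_A \circ (\opn{id}_P \ot_A^{\mrm{L}} \rho) = \mrm{R}\Ga_{\catt{T}}(\rho) \circ \ga_P . \]
All three of $\ga_A$, $\ga_P$ and $\mrm{R}\Ga_{\catt{T}}(\rho)$ are isomorphisms, whence $\opn{id}_P \ot_A^{\mrm{L}} \rho$ is an isomorphism, and postcomposing with the right unitor $\opn{ru}$ preserves this.

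The main obstacle is the bookkeeping in the first step: one must carefully verify that the three hypotheses on $\catt{T}$ transfer to the bimodule category so that Theorems \ref{thm:1001} and \ref{thm:1004} apply there, and that the induced $\si^{\mrm{R}}$ on $\cat{D}(A^{\mrm{en}})$ is genuinely compatible with the $\si^{\mrm{R}}$ on $\cat{D}(A)$ featuring in Theorem \ref{thm:1200}. Once that is settled, the remaining content is a direct reading of the commutative triangle of Theorem \ref{thm:1200} and the naturality of $\ga$.
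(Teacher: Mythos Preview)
Your argument is correct, and the overall strategy---reducing both idempotence conditions to the idempotence of derived torsion via the isomorphism $\ga$ of Theorem \ref{thm:1200}---matches the paper's. The difference lies in \emph{where} you invoke idempotence of $(\mrm{R}\Ga_{\catt{T}}, \si^{\mrm{R}})$. You first establish it on $\cat{D}(A^{\mrm{en}})$ by transferring the three hypotheses to the bimodule torsion class $\catt{T} \ot A^{\mrm{op}}$ and then applying Theorems \ref{thm:1001} and \ref{thm:1004} there. The paper instead postpones this: it keeps all diagrams in $\cat{D}(A^{\mrm{en}})$ until the final step, then uses conservativity of $\opn{Rest}_A$ (Proposition \ref{prop:1071}) together with Lemma \ref{lem:1095} to reduce the question of whether $\si^{\mrm{R}}_{\mrm{R}\Ga_{\catt{T}}(A)}$ and $\mrm{R}\Ga_{\catt{T}}(\si^{\mrm{R}}_A)$ are isomorphisms to the corresponding statements in $\cat{D}(A)$, where Theorem \ref{thm:1080} applies directly. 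This sidesteps precisely the ``bookkeeping'' you flag as the main obstacle: no transfer of hypotheses to the bimodule category is needed. Your route is more self-contained within $\cat{D}(A^{\mrm{en}})$ and makes the naturality argument for the right idempotence particularly clean; the paper's route is more economical in that it never re-proves idempotence in the larger category.
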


\begin{proof}
We shall start by proving that 
\begin{equation}  \label{eqn:1371}
\opn{lu} \circ \, (\rho \, \ot_A^{\mrm{L}} \, \opn{id}) : 
P \ot_{A}^{\mrm{L}} P \to P 
\end{equation}
is an isomorphism in $\cat{D}(A^{\mrm{en}})$. 
Because the forgetful functor 
$\opn{Rest}_A : \cat{D}(A^{\mrm{en}}) \to \cat{D}(A)$
is conservative, it is enough if we prove that 
$\opn{Rest}_A(\opn{lu} \circ (\rho \, \ot_A^{\mrm{L}} \, \opn{id}))$
is an isomorphism. Let us introduce the temporary notation 
$P' := \opn{Rest}_A(P) \in \cat{D}(A)$. 
With this notation, what we have to show is that 
\begin{equation}  \label{eqn:1372}
\opn{lu} \circ \, (\rho \, \ot_A^{\mrm{L}} \, \opn{id}) : 
P \ot_{A}^{\mrm{L}} P' \to P' 
\end{equation}
is an isomorphism in $\cat{D}(A)$.

Consider Theorem \ref{thm:1200} with $B = \K$ and 
$M = P' \in \cat{D}(A)$. 
There is a commutative diagram 
\[ \UseTips \xymatrix @C=8ex @R=6ex {
P \ot_{A}^{\mrm{L}} P'
\ar[r]^{\ga_{P'}}
\ar[d]_{ \rho \, \ot_A^{\mrm{L}} \, \opn{id} }
&
\mrm{R} \Ga_{\catt{T}}(P')
\ar[d]^{ \si^{\mrm{R}}_{P'} }
\\
A \ot^{\mrm{L}}_{A} P'
\ar[r]^{ \opn{lu} }
&
P'
} \]
in $\cat{D}(A)$, and the horizontal arrows are isomorphisms. 
It suffices to prove that 
$\si^{\mrm{R}}_{P'} : \mrm{R} \Ga_{\catt{T}}(P') \to P'$
is an isomorphism in $\cat{D}(A)$. 
But by Lemma \ref{lem:1095} there is an isomorphism 
$P' \cong \mrm{R} \Ga_{\catt{T}}(A')$,
where 
$A' := \opn{Rest}_A(A) \in \cat{D}(A)$. 
So what we need to prove is that 
\[ \si^{\mrm{R}}_{\mrm{R} \Ga_{\catt{T}}(A')} : 
\mrm{R} \Ga_{\catt{T}}(\mrm{R} \Ga_{\catt{T}}(A')) \to \mrm{R} 
\Ga_{\catt{T}}(A') 
\]
is an isomorphism in $\cat{D}(A)$. This is true because the copointed 
triangulated functor 
$(\mrm{R} \Ga_{\catt{T}}, \si^{\mrm{R}})$ on $\cat{D}(A)$
is idempotent; see Theorem \ref{thm:1080}. 

Now we are going to prove that 
\begin{equation} \label{eqn:1366}
\opn{ru} \circ \, (\opn{id}  \ot_A^{\mrm{L}} \, \rho)  : 
P \ot_{A}^{\mrm{L}} P \to P 
\end{equation}
is an isomorphism in $\cat{D}(A^{\mrm{en}})$. 

We have this diagram in $\cat{D}(A^{\mrm{en}})$, which is commutative up to a 
canonical isomorphism: 
\begin{equation} \label{eqn:1367}
\UseTips \xymatrix @C=12ex @R=6ex {
P \ot_{A}^{\mrm{L}} P
\ar[d]_{ \opn{id}  \ot_A^{\mrm{L}} \, \rho }
&
P \ot_{A}^{\mrm{L}} P \ot_{A}^{\mrm{L}} A
\ar[l]_{ \opn{id}  \ot_A^{\mrm{L}} \opn{ru} }
\ar[d]^{ \opn{id}  \ot_A^{\mrm{L}} \, \rho \, \ot_A^{\mrm{L}} \opn{id} }
\\
P \ot_{A}^{\mrm{L}} A
&
P \ot_{A}^{\mrm{L}} A \ot^{\mrm{L}}_{A} A
\ar[l]_{ \opn{id}  \ot_A^{\mrm{L}} \opn{ru} }
} 
\end{equation}
Using Theorem \ref{thm:1200} with $B = A$ and 
$M = A \in \cat{D}(A^{\mrm{en}})$, 
we have this commutative diagram in $\cat{D}(A^{\mrm{en}})$~:
\begin{equation} \label{eqn:1373}
\UseTips \xymatrix @C=8ex @R=6ex {
P \ot_{A}^{\mrm{L}} A
\ar[r]^{\ga_{A}}
\ar[d]_{ \rho \, \ot_A^{\mrm{L}} \, \opn{id} }
&
\mrm{R} \Ga_{\catt{T}}(A)
\ar[d]^{ \si^{\mrm{R}}_{A} }
\\
A \ot^{\mrm{L}}_{A} A
\ar[r]^{ \opn{lu} }
&
A
} 
\end{equation}
Applying the functor $P \ot_A^{\mrm{L}} (-)$ to diagram (\ref{eqn:1373}), we 
obtain this commutative diagram 
\begin{equation} \label{eqn:1368}
\UseTips \xymatrix @C=10ex @R=6ex {
P \ot_{A}^{\mrm{L}} P \ot_{A}^{\mrm{L}} A
\ar[d]_{ \opn{id}  \ot_A^{\mrm{L}} \, \rho \, \ot_A^{\mrm{L}} \, \opn{id} }
\ar[r]^{ \opn{id} \ot_{A}^{\mrm{L}} \, \ga_A }
&
P \ot_{A}^{\mrm{L}} \mrm{R} \Ga_{\catt{T}}(A)
\ar[d]_{ \opn{id}  \ot_A^{\mrm{L}} \, \si^{\mrm{R}}_A }
\\
P \ot_{A}^{\mrm{L}} A \ot^{\mrm{L}}_{A} A
\ar[r]^{ \opn{id} \ot_{A}^{\mrm{L}} \opn{lu} }
&
P \ot_{A}^{\mrm{L}} A 
}
\end{equation}
in $\cat{D}(A^{\mrm{en}})$.
The last move is using the fact that $\ga$ is an isomorphism of functors; this 
yields the next commutative diagram 
\begin{equation} \label{eqn:1369}
\UseTips \xymatrix @C=14ex @R=6ex {
P \ot_{A}^{\mrm{L}} \mrm{R} \Ga_{\catt{T}}(A)
\ar[d]_{ \opn{id}  \ot_A^{\mrm{L}} \, \si^{\mrm{R}}_A }
\ar[r]^{\ga_{\mrm{R} \Ga_{\catt{T}}(A)}}
&
\mrm{R} \Ga_{\catt{T}}
(\mrm{R} \Ga_{\catt{T}}(A))
\ar[d]_{\mrm{R} \Ga_{\catt{T}}(\si^{\mrm{R}}_A)}
\\
P \ot_{A}^{\mrm{L}} A 
\ar[r]^{\ga_A}
&
\mrm{R} \Ga_{\catt{T}}(A)
}
\end{equation}
in $\cat{D}(A^{\mrm{en}})$. 
All horizontal arrows in diagrams (\ref{eqn:1367}), (\ref{eqn:1373}), 
(\ref{eqn:1368}) and (\ref{eqn:1369}) are isomorphisms. 
Since $\opn{ru}$ is an isomorphism, to prove that (\ref{eqn:1366}) is an 
isomorphism, it is enough to prove that 
the morphism $\opn{id}  \ot_A^{\mrm{L}} \, \rho$
in (\ref{eqn:1367}) is an isomorphism. 
Passing horizontally from diagram (\ref{eqn:1367}) to diagram (\ref{eqn:1369}),
we see that it is enough to prove that the morphism (written in longhand) 
\begin{equation} \label{eqn:1374}
\mrm{R} \Ga_{\catt{T} \ot A^{\mrm{op}}}(\si^{\mrm{R}}_A) : 
\mrm{R} \Ga_{\catt{T} \ot A^{\mrm{op}}}
(\mrm{R} \Ga_{\catt{T} \ot A^{\mrm{op}}}(A)) \to
\mrm{R} \Ga_{\catt{T} \ot A^{\mrm{op}}}(A)
\end{equation}
is an isomorphism in $\cat{D}(A^{\mrm{en}})$.
Because the functor $\opn{Rest}_A$ is conservative, and by Lemma \ref{lem:1095},
it suffices to prove that 
\[ \mrm{R} \Ga_{\catt{T}}(\si^{\mrm{R}}_{A'}) : 
\mrm{R} \Ga_{\catt{T}}(\mrm{R} \Ga_{\catt{T}}(A')) \to
\mrm{R} \Ga_{\catt{T}}(A') \]
is an isomorphism in $\cat{D}(A)$, where, as before, we write  
$A' := \opn{Rest}_A(A) \in \cat{D}(A)$.
This is true because the copointed triangulated functor 
$(\mrm{R} \Ga_{\catt{T}}, \si^{\mrm{R}})$ on $\cat{D}(A)$
is idempotent; see Theorem \ref{thm:1080}. 
\end{proof}

\begin{rem} \label{rem:1422}
As explained in Remark \ref{rem:1420}, the flatness assumption can be 
circumvented using K-flat DG ring resolutions. 
However, the technicalities involved in proving the nonflat versions of 
Theorems \ref{thm:1370} and \ref{thm:1200} turned out to be quite formidable.
These nonflat generalizations will appear in a future paper.
\end{rem}

\section{Noncommutative MGM Equivalence} 
\label{sec:NC-MGM}

In this section we prove Theorem \ref{thm:1400}, that is an expanded version of 
Theorem \ref{thm:1378} in the Introduction. Conventions \ref{conv:1425} and 
\ref{conv:1445} are in place; in particular, $\K$ is a commutative base ring, 
and $A$ and $B$ are flat central $\K$-rings.
The enveloping ring of $A$ is 
$A^{\mrm{en}} = A \ot A^{\mrm{op}}$. 

Let us briefly recall torsion in bimodule categories, as developed in 
Section \ref{sec:tors-to-obj}. 
Suppose we are given a torsion class $\catt{T} \sub \cat{M}(A)$.
It extends to a bimodule torsion class 
\begin{equation} \label{eqn:2040}
\catt{T} \ot B^{\mrm{op}} \sub \cat{M}(A \ot B^{\mrm{op}}) .
\end{equation}
By definition, a module $M \in \cat{M}(A \ot B^{\mrm{op}})$
is $(\catt{T} \ot B^{\mrm{op}})$-torsion if it is $\catt{T}$-torsion as a left 
$A$-module (i.e.\ after forgetting the right $B$-module structure). 
Because the torsion and derived torsion functors respect the restriction 
functors $\opn{Rest}_{A}$ (namely they do not notice the ring $B$, see Lemma 
\ref{lem:1095}), we often write $\catt{T}$ instead of 
$\catt{T} \ot B^{\mrm{op}}$.

There is a copointed triangulated functor
$(\mrm{R} \Ga_{\catt{T}}, \si^{\mrm{R}})$
on $\cat{D}(A \ot B^{\mrm{op}})$. See Lemma \ref{lem:1063}. 
The category $\cat{D}(A \ot B^{\mrm{op}})_{\catt{T} \tup{-tor}}$
of derived $\catt{T}$-torsion complexes was introduced in Definition 
\ref{dfn:2025}. Since the setting there was a bit different, let us 
recall the definition in the current notation: a complex 
$M \in \cat{D}(A\ot B^{\mrm{op}})$ is called a 
derived $\catt{T}$-torsion complex if the morphism 
$\si^{\mrm{R}}_{M} : \mrm{R} \Ga_{\catt{T}}(M) \to M$
in $\cat{D}(A\ot B^{\mrm{op}})$ is an isomorphism.

For $B = A$ we get a copointed triangulated functor 
$(\mrm{R} \Ga_{\catt{T}}, \si^{\mrm{R}})$ on the category 
$\cat{D}(A^{\mrm{en}})$. It induces a copointed object $(P, \rho)$ in the 
monoidal category $\cat{D}(A^{\mrm{en}})$.
See Lemma \ref{lem:1082} and Definition \ref{dfn:1200}.

Letting 
$F_{\catt{T}} := P \ot_A^{\mrm{L}} (-)$,
Theorem \ref{thm:1200} tells us that there is an isomorphism of copointed
triangulated functors
\begin{equation} \label{eqn:2043}
(F_{\catt{T}}, \si) \cong (\mrm{R} \Ga_{\catt{T}}, \si^{\mrm{R}})
\end{equation}
on $\cat{D}(A\ot B^{\mrm{op}})$. 

\begin{dfn} \label{dfn:2005}
Let $A$ and $B$ be flat central $\K$-rings, let $\catt{T}$ be a torsion 
class in $\cat{M}(A)$, and let 
$(P, \rho)$ be the induced copointed object in the monoidal category 
$\cat{D}(A^{\mrm{en}})$.
\begin{enumerate}
\item Define 
\[ G_{\catt{T}} : \cat{D}(A\ot B^{\mrm{op}}) \to \cat{D}(A\ot B^{\mrm{op}}) \]
to be the triangulated functor
\[ G_{\catt{T}} := \opn{RHom}_A(P, -) . \]
We call it the {\em abstract $\catt{T}$-completion functor}. 

\item Let $\tau : \opn{Id} \to G_{\catt{T}}$
be the morphism of triangulated functors that's 
induced by $(P, \rho)$, as in Definition \ref{dfn:1075}. 
Thus the pair $(G_{\catt{T}}, \tau)$ is a pointed triangulated 
functor on $\cat{D}(A\ot B^{\mrm{op}})$. 

\item A complex $M \in \cat{D}(A\ot B^{\mrm{op}})$ is called 
a {\em derived $\catt{T}$-complete complex} if the morphism
$\tau_{M} : M \to G_{\catt{T}}(M)$ is an isomorphism. 
The full subcategory of $\cat{D}(A \ot B^{\mrm{op}})$ on the derived 
$\catt{T}$-complete complexes is denoted by \lb 
$\cat{D}(A \ot B^{\mrm{op}})_{\catt{T} \tup{-com}}$. 
\end{enumerate}
\end{dfn}

Because $(G_{\catt{T}}, \tau)$ and 
$(\mrm{R} \Ga_{\catt{T}}, \si^{\mrm{R}})$ are  triangulated (co)pointed 
functors (see Definitions \ref{dfn:1003} and \ref{dfn:1210}), the categories 
$\cat{D}(A \ot B^{\mrm{op}})_{\catt{T} \tup{-com}}$
and
$\cat{D}(A \ot B^{\mrm{op}})_{\catt{T} \tup{-tor}}$
are full triangulated categories of $\cat{D}(A \ot B^{\mrm{op}})$. 

In Definition \ref{dfn:1037} we said what it means for the torsion class 
$\catt{T} \sub \cat{M}(A)$
to be quasi-compact, weakly stable and finite dimensional. 
The next theorem is slightly stronger than Theorem \ref{thm:1378} in the 
Introduction, where we had $B = \K$ and thus $A \ot B^{\mrm{op}} = A$. 

\begin{thm}[Noncommutative MGM Equivalence] \label{thm:1400}
Let $A$ and $B$ be flat central $\K$-rings, and let $\catt{T}$ be a 
quasi-compact, weakly stable, finite dimensional torsion class in $\cat{M}(A)$. 
Then the following hold\tup{:}
\begin{enumerate}
\item The functor 
\[ G_{\catt{T}} : \cat{D}(A \ot B^{\mrm{op}}) \to 
\cat{D}(A \ot B^{\mrm{op}}) \]
is right adjoint to 
$\mrm{R} \Ga_{\catt{T}}$. 

\item The copointed triangulated functor 
$(\mrm{R} \Ga_{\catt{T}}, \si^{\mrm{R}})$
on $\cat{D}(A \ot B^{\mrm{op}})$, and the pointed triangulated functor 
$(G_{\catt{T}}, \tau)$ on $\cat{D}(A \ot B^{\mrm{op}})$, are both idempotent.

\item The subcategories 
$\cat{D}(A \ot B^{\mrm{op}})_{\catt{T} \tup{-tor}}$
and 
$\cat{D}(A \ot B^{\mrm{op}})_{\catt{T} \tup{-com}}$ 
are the essential images of the functors $\mrm{R} \Ga_{\catt{T}}$ and 
$G_{\catt{T}}$, respectively. 

\item The functor 
\[ \mrm{R} \Ga_{\catt{T}} : 
\cat{D}(A\ot B^{\mrm{op}})_{\catt{T} \tup{-com}} \to 
\cat{D}(A\ot B^{\mrm{op}})_{\catt{T} \tup{-tor}} \]
is an equivalence of triangulated categories, with quasi-inverse 
$G_{\catt{T}}$. 
\end{enumerate}
\end{thm}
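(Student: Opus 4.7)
The plan is to deduce Theorem \ref{thm:1400} from the Abstract Equivalence (Theorem \ref{thm:1075}) by combining the two main technical results of the previous section: Theorem \ref{thm:1370} (idempotence of the copointed object $(P,\rho)$) and Theorem \ref{thm:1200} (identification of $P\ot_A^{\mrm L}(-)$ with $\mrm R\Ga_{\catt T}$). No independent computation is really needed beyond checking that everything is compatible with the copointed structure.

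More precisely, I would proceed as follows. Let $(P,\rho)$ be the copointed object in $\cat{D}(A^{\mrm{en}})$ induced by $\catt{T}$, as in Definition \ref{dfn:1200}. Under the stated hypotheses on $\catt{T}$, Theorem \ref{thm:1370} says that $(P,\rho)$ is idempotent. Let $F := P\ot_A^{\mrm L}(-)$ and $G := \opn{RHom}_A(P,-)$, together with the morphisms $\si : F\to\opn{Id}$ and $\tau : \opn{Id}\to G$ from Definition \ref{dfn:1075}; by Definition \ref{dfn:2005}, $G$ is exactly $G_{\catt T}$. Theorem \ref{thm:1075} then yields, verbatim, the four conclusions of Theorem \ref{thm:1400} with $F$ in place of $\mrm R\Ga_{\catt T}$ and with $\cat{D}(A\ot B^{\mrm{op}})_F$, $\cat{D}(A\ot B^{\mrm{op}})_G$ in place of $\cat{D}(A\ot B^{\mrm{op}})_{\catt T\tup{-tor}}$, $\cat{D}(A\ot B^{\mrm{op}})_{\catt T\tup{-com}}$. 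To finish, I would transport these conclusions through the isomorphism $\ga : F \iso \mrm R\Ga_{\catt T}$ provided by Theorem \ref{thm:1200}.

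The one thing that requires attention is that this transport must respect the copointed structure: we need $\si^{\mrm R}_M \circ \ga_M = \si_M$ for every $M$. But this is precisely what the commutative square in the statement of Theorem \ref{thm:1200} asserts, once one unwinds Definition \ref{dfn:1075}(2) (which expresses $\si_M$ as $\opn{lu}\circ(\rho\ot_A^{\mrm L}\opn{id}_M)$) and Lemma \ref{lem:1082} (which characterizes $\si^{\mrm R}$). Consequently $\ga$ is an isomorphism of copointed triangulated functors $(F,\si)\iso(\mrm R\Ga_{\catt T},\si^{\mrm R})$, so in particular $\cat{D}(A\ot B^{\mrm{op}})_F = \cat{D}(A\ot B^{\mrm{op}})_{\catt T\tup{-tor}}$, and the essential images coincide. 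All four parts of Theorem \ref{thm:1400} then follow by substituting $\mrm R\Ga_{\catt T}$ for $F$ in the conclusions of Theorem \ref{thm:1075}: part (1) from adjunction, part (2) from the idempotence of $(F,\si)$ and $(G,\tau)$, part (3) from the essential image statement, and part (4) from the equivalence. The proof is essentially a packaging exercise; the substantive content sits in Theorems \ref{thm:1370} and \ref{thm:1200}, and the only mild subtlety — verifying that $\ga$ intertwines the morphisms to the identity — is already embedded in the statement of Theorem \ref{thm:1200}, so there is no real obstacle to overcome.
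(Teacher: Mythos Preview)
Your proposal is correct and matches the paper's own proof essentially line for line: invoke Theorem \ref{thm:1370} to get idempotence of $(P,\rho)$, apply Theorem \ref{thm:1075} to the functors $F=P\ot_A^{\mrm L}(-)$ and $G=G_{\catt T}$, and then transport along the isomorphism of copointed functors $(F,\si)\cong(\mrm R\Ga_{\catt T},\si^{\mrm R})$ from Theorem \ref{thm:1200}. Your extra care in verifying that $\ga$ intertwines $\si$ with $\si^{\mrm R}$ is exactly what the paper packages into equation (\ref{eqn:2043}).
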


\begin{proof}
There is the isomorphism of copointed triangulated functors (\ref{eqn:2043}),
coming from Theorem \ref{thm:1200}.
Therefore there is equality 
\[ \cat{D}(A\ot B^{\mrm{op}})_{F_{\catt{T}}} =
\cat{D}(A\ot B^{\mrm{op}})_{\catt{T} \tup{-tor}} . \]
By definition there is equality 
\[ \cat{D}(A\ot B^{\mrm{op}})_{G_{\catt{T}}} = 
\cat{D}(A\ot B^{\mrm{op}})_{\catt{T} \tup{-com}} .  \]
We see that items (1)-(4) here are special cases of items (1)-(4), 
respectively, in Theorem \ref{thm:1075}.
\end{proof}

Here are two examples of this result. 

\begin{exa} \label{exa:1455}
Consider the rings $B = A = \K := \Z$, the multiplicatively closed set 
$S := \Z - \{ 0 \}$, and the torsion class $\catt{T} := \catt{T}_{S}$
in $\cat{M}(\Z)$, as in Example \ref{exa:1416}. 
So here 
\[ \cat{M}(A \ot B^{\mrm{op}}) = \cat{M}(\Z) = \cat{Ab} , \]
and $\Ga_{\catt{T}}(M)$ is the usual torsion subgroup of an abelian group $M$. 

Even though the ring $\Z$ is commutative and noetherian, $\catt{T}$
is not a torsion class associated to an ideal, so it is not covered by the 
results of \cite{PSY1} (cf.\ Section \ref{sec:comm-rings}).

Still, it can be shown that $\catt{T}$ satisfies the conditions of 
Theorem \ref{thm:1400}. Furthermore, in this case the functor $G_{\catt{T}}$ is 
the left derived functor $\mrm{L} \La$, 
where $\La : \cat{M}(\Z) \to \cat{M}(\Z)$ is the ``profinite completion'' 
functor 
\[ \La(M) := \lim_{\leftarrow k} \, (M / k \cd M) . \]
Here $k$ runs through the set of positive integers, with its partial order by 
divisibility. 
\end{exa}

\begin{exa} \label{exa:1470}
Take any ring $A$, and let $B := \K$, so that 
$\cat{M}(A \ot B^{\mrm{op}}) = \cat{M}(A)$. 
Suppose $a$ is a regular normalizing element of $A$. 
Recall that this means that $A \cd a = a \cd A$, and $a$ is a non-zero-divisor.
(It follows that there is an automorphism $\ga$ of the ring $A$ such that 
$a \cd b = \ga(b) \cd a$ for all $b \in A$.)  
Let $\a \sub A$ be the ideal generated  by $a$. 

As already mentioned in Example \ref{exa:1417}, the torsion class 
$\catt{T}_{\a} \sub \cat{M}(A)$ is weakly stable. 
It can be shown (see \cite[Lemma 6.4]{Vy}) that 
$\catt{T}_{\a}$ is also quasi-compact and finite dimensional. Therefore Theorem 
\ref{thm:1400} applies. Moreover, in this case the functor $G_{\catt{T}}$ is 
nothing but $\mrm{L} \La_{\a}$, the derived $\a$-adic completion functor. 
\end{exa}

We end this section with two questions related to Theorem \ref{thm:1400}.

\begin{que} \label{que:1380}
Under which conditions is there an additive functor 
$\La : \cat{M}(A) \to \cat{M}(A)$, such that
the right adjoint to $\mrm{R} \Ga_{\catt{T}}$ is 
$G_{\catt{T}} = \mrm{L} \La$~?
Compare to the commutative situation in Theorem \ref{thm:1375}, where 
$\La = \La_{\a}$ is the $\a$-adic completion functor. 
On the other hand, there are counterexamples where this fails (see \cite[Example 6.2]{Vy}). 
\end{que}

\begin{que} \label{que:1660}
By assumption the functor $\mrm{R} \Ga_{\catt{T}}$ has finite cohomological 
dimension. In the commutative case (see Theorem \ref{thm:1375}), where 
$\mrm{R} \Ga_{\catt{T}} = \mrm{R} \Ga_{\a}$, 
its right 
adjoint $G_{\catt{T}} = \mrm{L} \La_{\a}$
also has finite cohomological dimension. Is this true in the 
noncommutative case?
\end{que}

\section{Symmetric Derived Torsion} 
\label{sec:symm-der-tors}


In this final section of the paper we prove Theorem \ref{thm:1401}, 
that is Theorem \ref{thm:1379} in the Introduction. Conventions \ref{conv:1425} 
and \ref{conv:1445} are in place. In particular, $\K$ is a nonzero commutative 
base ring.

Let $A$ and $B$ be flat central $\K$-rings. 
We know that the monoidal category $\cat{D}(A^{\mrm{en}})$ has a left monoidal 
action on the category $\cat{D}(A \ot B^{\mrm{op}})$. 
Similarly, the  monoidal category $\cat{D}(B^{\mrm{en}})$ has a right monoidal 
action on $\cat{D}(A \ot B^{\mrm{op}})$. Concretely, given complexes
$P \in \cat{D}(A^{\mrm{en}})$, $Q \in \cat{D}(B^{\mrm{en}})$
and $M \in \cat{D}(A \ot B^{\mrm{op}})$, the monoidal actions are 
\begin{equation} \label{eqn:2016}
P \ot_A^{\mrm{L}} M, \, M \ot_B^{\mrm{L}} Q \, \in \,  
\cat{D}(A \ot B^{\mrm{op}}) . 
\end{equation}
The left and right monoidal action commute, in the sense that there is the  
canonical associativity isomorphism 
\[ (P \ot_A^{\mrm{L}} M) \ot_B^{\mrm{L}} Q \cong 
P \ot_A^{\mrm{L}} (M \ot_B^{\mrm{L}} Q) \in 
\cat{D}(A \ot B^{\mrm{op}}) . \]

Previously we only looked at a 
torsion class $\catt{T} \sub \cat{M}(A)$, that we extended to bimodule torsion 
classes
\[ \catt{T}  \ot B^{\mrm{op}} \sub \cat{M}(A \ot B^{\mrm{op}})
\quad \tup{and} \quad 
\catt{T}  \ot A^{\mrm{op}}  \sub \cat{M}(A^{\mrm{en}}) , \]
as explained in Definition \ref{dfn:1475}. 
We sometimes referred to these bimodule torsion classes succinctly by 
$\catt{T}$, and this abbreviation was justified by Lemma \ref{lem:1095}. 

Here we consider a more complicated situation: there is also a torsion class
$\catt{S}^{\mrm{op}} \sub \cat{M}(B^{\mrm{op}})$.
We extend it by the same procedure (only replacing the roles of $A$ and 
$B^{\mrm{op}}$) to bimodule torsion classes
\[ A \ot \catt{S}^{\mrm{op}} \sub \cat{M}(A \ot B^{\mrm{op}}) 
\quad \tup{and} \quad 
B \ot \catt{S}^{\mrm{op}} \sub \cat{M}(B^{\mrm{en}}) . \]
Again, we sometimes abbreviate the notation to $\catt{S}^{\mrm{op}}$. 

We shall need to enrich another part of our notation to accommodate the more 
complicated situation. For a complex $M \in \cat{D}(A \ot B^{\mrm{op}})$, 
the canonical morphisms of triangulated functors from Lemma 
\ref{lem:1082} will now be denoted as follows:
\begin{equation} \label{eqn:2011}
\si^{\mrm{R}}_{\catt{T},  M} : \mrm{R} \Ga_{\catt{T}}(M) \to M 
\end{equation}
and
\begin{equation} \label{eqn:2012}
\si^{\mrm{R}}_{\catt{S}^{\mrm{op}}, M} : 
\mrm{R} \Ga_{\catt{S}^{\mrm{op}}}(M)  \to M . 
\end{equation}

\begin{exa} \label{exa:2010}
This is a continuation of Example \ref{exa:2040}.
Suppose $\a \sub A$ and $\b^{\mrm{op}} \sub B^{\mrm{op}}$ are two-sided ideals 
that are finitely generated as left ideals.  
Note that $\b^{\mrm{op}}$ can be viewed as a two-sided ideal $\b \sub B$, and 
then it is finitely generated as a {\em right ideal}. 

In the notation of Definition \ref{dfn:1065}, there are torsion classes
$\catt{T} := \catt{T}_{\a} \sub \cat{M}(A)$
and 
$\catt{S}^{\mrm{op}} := \catt{T}_{\b^{\mrm{op}}} \sub \cat{M}(B^{\mrm{op}})$.
This places us in the situation described above:
$\catt{T}$ is a torsion class in $\cat{M}(A)$
and $\catt{S}^{\mrm{op}}$ is a torsion class in $\cat{M}(B^{\mrm{op}})$.
These extend to bimodule torsion classes 
\[ \catt{T}  \ot B^{\mrm{op}} , \, A \ot \catt{S}^{\mrm{op}} 
\sub \cat{M}(A \ot B^{\mrm{op}}) . \]

There is another way to view these bimodule torsion classes.
Consider the two-sided ideals 
$\a \ot B^{\mrm{op}}$ and $A \ot \b^{\mrm{op}}$ in the ring 
$A \ot B^{\mrm{op}}$. Then, as torsion classes in $\cat{M}(A \ot B^{\mrm{op}})$,
and with the notation of Definition \ref{dfn:1065}, we have
\[ \catt{T}  \ot B^{\mrm{op}} = \catt{T}_{\a \ot B^{\mrm{op}}} 
\quad \tup{and} \quad 
A \ot \catt{S}^{\mrm{op}} = \catt{T}_{A \ot \b^{\mrm{op}}} . \]
\end{exa} 

The notation used in Definition \ref{dfn:2025} for cohomologically 
$\catt{T}$-torsion complexes is not sufficient now, since 
we must accommodate $\catt{S}^{\mrm{op}}$-torsion. Hence the next 
definition.

\begin{dfn} \label{dfn:20016}
Let $\catt{T} \sub \cat{M}(A)$ and 
$\catt{S}^{\mrm{op}} \sub \cat{M}(B^{\mrm{op}})$
be torsion classes. We denote by 
\[ \cat{D}_{(\catt{T}, \catt{S}^{\mrm{op}})}(A \ot B^{\mrm{op}}) \]
full subcategory of $\cat{D}(A \ot B^{\mrm{op}})$ 
the  on the complexes $M$ such that 
\[ \opn{H}^i(M) \in (\catt{T} \ot B^{\mrm{op}}) \cap 
(A \ot \catt{S}^{\mrm{op}}) \sub \cat{M}(A \ot B^{\mrm{op}}) \]
for all $i$. 
\end{dfn}

Clearly $\cat{D}_{(\catt{T}, \catt{S}^{\mrm{op}})}(A \ot B^{\mrm{op}})$
is a full triangulated subcategory of $\cat{D}(A \ot B^{\mrm{op}})$. 

\begin{dfn} \label{dfn:2000}
Let $A$ and $B$ be flat central $\K$-rings, let $\catt{T} \sub 
\cat{M}(A)$ and $\catt{S}^{\mrm{op}} \sub \cat{M}(B^{\mrm{op}})$
be torsion classes, and let 
$M \in \cat{D}(A \ot B^{\mrm{op}})$.
\begin{enumerate}
\item The complex $M$ said to have {\em weakly symmetric derived 
$\catt{T}$-$\catt{S}^{\mrm{op}}$-torsion} if 
\[ \mrm{R} \Ga_{\catt{T}}(M) , \,  
\mrm{R} \Ga_{\catt{S}^{\mrm{op}}}(M) \, \in \,  
\cat{D}_{(\catt{T}, \catt{S}^{\mrm{op}})}(A \ot B^{\mrm{op}}) . \]
%
 
\item The complex $M$ is said to have {\em symmetric derived 
$\catt{T}$-$\catt{S}^{\mrm{op}}$-torsion} if there is an isomorphism 
\[ \ep_{M} : \mrm{R} \Ga_{\catt{T}}(M) \iso 
\mrm{R} \Ga_{\catt{S}^{\mrm{op}}}(M) \]
in $\cat{D}(A  \ot B^{\mrm{op}})$, 
such that the diagram
\[ \UseTips \xymatrix @C=8ex @R=8ex {
\mrm{R} \Ga_{\catt{T}}(M)
\ar[r]^{\ep_{M}}_{\cong}
\ar[dr]_{\si^{\mrm{R}}_{\catt{T},  M}}
&
\mrm{R} \Ga_{\catt{S}^{\mrm{op}}}(M)
\ar[d]^{\si^{\mrm{R}}_{\catt{S}^{\mrm{op}}, M}}
\\
&
M
} \]
in $\cat{D}(A \ot B^{\mrm{op}})$ is commutative.
Such an isomorphism $\ep_M$ is called a {\em symmetry isomorphism}. 
\end{enumerate}
\end{dfn}

Of course if $M$ has symmetric derived torsion then it has weakly symmetric 
derived torsion. The full subcategory of $\cat{D}(A \ot B^{\mrm{op}})$ on the 
complexes with weakly symmetric derived torsion is triangulated. 

%


\begin{thm}[Symmetric Derived Torsion] \label{thm:1401}
Let $A$ and $B$ be flat central $\K$-rings, and let $\catt{T} \sub \cat{M}(A)$ 
and $\catt{S}^{\mrm{op}} \sub \cat{M}(B^{\mrm{op}})$
be quasi-compact, weakly stable, finite dimensional torsion classes. 
Let $M \in \cat{D}(A \ot B^{\mrm{op}})$
be a complex with weakly symmetric derived 
$\catt{T}$-$\catt{S}^{\mrm{op}}$-torsion.

Then $M$ has symmetric derived torsion. Moreover, the symmetry isomorphism 
\[ \ep_{M} : \mrm{R} \Ga_{\catt{T}}(M) \iso 
\mrm{R} \Ga_{\catt{S}^{\mrm{op}}}(M) \]
in $\cat{D}(A \ot B^{\mrm{op}})$ is unique, and it is functorial in such 
complexes $M$. 
\end{thm}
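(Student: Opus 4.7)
The strategy is to realize both derived torsion functors as monoidal actions on $\cat{D}(A \ot B^{\mrm{op}})$ via Theorem \ref{thm:1200}, and to exploit associativity of the derived tensor product to produce a natural zigzag between them. Set $P := \mrm{R}\Ga_{\catt{T}}(A) \in \cat{D}(A^{\mrm{en}})$ with its canonical morphism $\rho : P \to A$, and symmetrically $Q := \mrm{R}\Ga_{\catt{S}^{\mrm{op}}}(B) \in \cat{D}(B^{\mrm{en}})$ with $\rho' : Q \to B$. Theorem \ref{thm:1200}, applied in both orientations, gives natural isomorphisms $\mrm{R}\Ga_{\catt{T}}(M) \cong P \ot_A^{\mrm{L}} M$ and $\mrm{R}\Ga_{\catt{S}^{\mrm{op}}}(M) \cong M \ot_B^{\mrm{L}} Q$, each compatible with the canonical map to $M$. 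By associativity of $(-\ot^{\mrm{L}}-)$ (Proposition \ref{prop:1042}(4)), the iterated object $H(M) := P \ot_A^{\mrm{L}} M \ot_B^{\mrm{L}} Q$ carries two canonical morphisms
\[
\al_M : H(M) \to P \ot_A^{\mrm{L}} M \quad \tup{and} \quad
\be_M : H(M) \to M \ot_B^{\mrm{L}} Q ,
\]
induced by $\rho'$ (with the right unitor) and $\rho$ (with the left unitor); under the identifications above they correspond to $\si^{\mrm{R}}_{\catt{S}^{\mrm{op}}, \mrm{R}\Ga_{\catt{T}}(M)}$ and $\si^{\mrm{R}}_{\catt{T}, \mrm{R}\Ga_{\catt{S}^{\mrm{op}}}(M)}$, respectively.

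The weak symmetry hypothesis places $\mrm{R}\Ga_{\catt{T}}(M)$ in $\cat{D}_{(\catt{T}, \catt{S}^{\mrm{op}})}(A \ot B^{\mrm{op}})$, so in particular its cohomologies are $\catt{S}^{\mrm{op}}$-torsion. Applying Theorem \ref{thm:2031} to the bimodule torsion class $A \ot \catt{S}^{\mrm{op}}$ in $\cat{M}(A \ot B^{\mrm{op}})$ --- which inherits weak stability, quasi-compactness, and finite dimensionality from $\catt{S}^{\mrm{op}}$ via Lemma \ref{lem:1095} and the conservativity of $\opn{Rest}_{B^{\mrm{op}}}$ --- forces $\al_M$ to be an isomorphism; by the mirror argument $\be_M$ is an isomorphism as well. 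I would then define the symmetry isomorphism as $\ep_M := \be_M \circ \al_M^{-1}$. Commutativity of the required triangle with apex $M$ reduces to the identity
\[
\si^{\mrm{R}}_{\catt{T}, M} \circ \al_M \, = \, \si^{\mrm{R}}_{\catt{S}^{\mrm{op}}, M} \circ \be_M
\]
of morphisms $H(M) \to M$. Both sides equal the ``doubly reduced'' canonical morphism induced jointly by $\rho$ and $\rho'$ together with the left and right unitors, which is a coherence identity in the biclosed monoidal structure of Proposition \ref{prop:1425} and Remark \ref{rem:1425}.

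For uniqueness, let $\ep_M, \ep_M'$ be two symmetry isomorphisms; their difference $\delta := \ep_M - \ep_M'$ satisfies $\si^{\mrm{R}}_{\catt{S}^{\mrm{op}}, M} \circ \delta = 0$. Since $\mrm{R}\Ga_{\catt{T}}(M)$ is derived $\catt{S}^{\mrm{op}}$-torsion, applying $\mrm{R}\Ga_{\catt{S}^{\mrm{op}}}$ identifies $\mrm{R}\Ga_{\catt{S}^{\mrm{op}}}(\delta)$ with $\delta$; combined with the fact that $\mrm{R}\Ga_{\catt{S}^{\mrm{op}}}(\si^{\mrm{R}}_{\catt{S}^{\mrm{op}}, M})$ is an isomorphism by idempotence (Theorem \ref{thm:1400}(2)), this yields $\delta = 0$. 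Functoriality in $M$ follows by the same pattern: for $f : M \to M'$ between weakly symmetric complexes, the morphisms $\mrm{R}\Ga_{\catt{S}^{\mrm{op}}}(f) \circ \ep_M$ and $\ep_{M'} \circ \mrm{R}\Ga_{\catt{T}}(f)$ compose with $\si^{\mrm{R}}_{\catt{S}^{\mrm{op}}, M'}$ to the same morphism $\mrm{R}\Ga_{\catt{T}}(M) \to M'$, and the idempotence argument forces them to agree.

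The hard part I expect is twofold: verifying that $A \ot \catt{S}^{\mrm{op}}$ and $\catt{T} \ot B^{\mrm{op}}$ really do inherit the three finiteness properties from the original one-sided torsion classes, so that Theorem \ref{thm:2031} applies to them on the bimodule level; and the coherence bookkeeping that identifies $\al_M, \be_M$ with the named $\si^{\mrm{R}}$ morphisms and makes the triangle commute. Both are essentially a matter of tracking unitor and associativity isomorphisms of the derived monoidal structure through the K-flat and K-injective resolutions used to construct the bifunctors in Section \ref{sec:bimods}, but they must be handled with some care before the formal argument above can be executed cleanly.
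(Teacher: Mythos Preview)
Your proposal is correct and follows essentially the same route as the paper: represent both derived torsion functors via Theorem \ref{thm:1200} as $P \ot_A^{\mrm{L}}(-)$ and $(-) \ot_B^{\mrm{L}} Q$, use associativity to pass through the common object $P \ot_A^{\mrm{L}} M \ot_B^{\mrm{L}} Q$, and invoke Theorem \ref{thm:2031} on the bimodule torsion classes to see that the two legs of the zigzag are isomorphisms. Your separate uniqueness argument via idempotence is a bit more explicit than the paper's, which simply reads uniqueness off the commutative diagram; both are fine.
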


The notions of quasi-compact, weakly stable and finite dimensional torsion 
classes were introduced in Definition \ref{dfn:1037}.

\begin{proof}
Let 
$P := \mrm{R} \Ga_{\catt{T}}(A) \in \cat{D}(A^{\mrm{en}})$,
as in Definition \ref{dfn:1200}. By Theorem \ref{thm:1200} we know that there 
is an isomorphism 
\begin{equation} \label{eqn:1460}
\ga_{\catt{T}} : P \ot_A^{\mrm{L}} (-) \iso \mrm{R} \Ga_{\catt{T}}  
\end{equation}
of triangulated functors from $\cat{D}(A \ot B^{\mrm{op}})$ to itself.
And there is an idempotent copointed object
$(P, \si^{\mrm{R}}_{\catt{T},  A})$
in the monoidal category $\cat{D}(A^{\mrm{en}})$. 

The constructions of Section \ref{sec:tors-to-obj}, performed 
with the ring $B^{\mrm{op}}$ instead of $A$, give rise to a triangulated 
functor 
\[ \mrm{R} \Ga_{\catt{S}^{\mrm{op}}} : 
\cat{D}(B^{\mrm{en}}) \to \cat{D}(B^{\mrm{en}}) . \]
Applying this functor to the monoidal unit  
$B \in \cat{D}(B^{\mrm{en}})$, we obtain an object
\[ Q := \mrm{R} \Ga_{\catt{S}^{\mrm{op}}}(B) \in \cat{D}(B^{\mrm{en}}) . \]
After some possibly disorienting switches between rings and their opposites,
and between $A$ and $B$, we realize that Theorem \ref{thm:1200} implies that 
there is an isomorphism 
\begin{equation} \label{eqn:1461}
\ga_{\catt{S}^{\mrm{op}}} : (-) \ot_B^{\mrm{L}} Q \iso 
\mrm{R} \Ga_{\catt{S}^{\mrm{op}}}   
\end{equation}
as triangulated functors from $\cat{D}(A \ot B^{\mrm{op}})$ to itself.
Furthermore, there is an idempotent copointed object
$(Q, \si^{\mrm{R}}_{\catt{S}^{\mrm{op}}, A})$ 
in the monoidal category $\cat{D}(B^{\mrm{en}})$. 

Because the derived tensor product is associative (up to a canonical 
isomorphism, see Proposition \ref{prop:1425}), we deduce from formulas 
(\ref{eqn:1460}) and (\ref{eqn:1461})
that there are isomorphisms
\begin{equation} \label{eqn:1400}
\mrm{R} \Ga_{\catt{S}^{\mrm{op}}} \circ \mrm{R} \Ga_{\catt{T}} \cong
\bigl( P \ot_A^{\mrm{L}} (-) \bigr) \ot_B^{\mrm{L}} Q \cong 
P \ot_A^{\mrm{L}} \bigl( (-) \ot_B^{\mrm{L}} Q \bigr) \cong 
\mrm{R} \Ga_{\catt{T}} \circ \mrm{R} \Ga_{\catt{S}^{\mrm{op}}} 
\end{equation}
of triangulated functors from $\cat{D}(A \ot B^{\mrm{op}})$ to itself.

We are given a complex 
$M \in \cat{D}(A \ot B^{\mrm{op}})$
with weakly symmetric derived torsion. 
Consider the following diagram in $\cat{D}(A \ot B^{\mrm{op}})$.
\begin{equation} \label{eqn:3871} 
\UseTips \xymatrix @C=8ex @R=8ex {
(P \ot^{\mrm{L}}_{A} M) \ot^{\mrm{L}}_{B} Q
\ar[d]_{ \opn{ru} \circ \, 
(\opn{id} \ot^{\mrm{L}}_{A} \, \si^{\mrm{R}}_{\catt{S}^{\mrm{op}}, B}) }
\ar[r]^{\al}_{\cong}
&
P \ot^{\mrm{L}}_{A} (M \ot^{\mrm{L}}_{B} Q)
\ar[d]^{ \opn{lu} \circ \, 
(\si^{\mrm{R}}_{\catt{T}, A} \ot^{\mrm{L}}_{A} \opn{id}) }
\\
P \ot^{\mrm{L}}_{A} M 
\ar[dr]_{ \opn{lu} \circ \, 
(\si^{\mrm{R}}_{\catt{T}, A}  \ot^{\mrm{L}}_{A} \opn{id}) \quad }
&
M \ot^{\mrm{L}}_{B} Q
\ar[d]^{ \opn{ru} \circ \, 
(\opn{id} \ot^{\mrm{L}}_{A} \, \si^{\mrm{R}}_{\catt{S}^{\mrm{op}}, B}) }
\\
&
M
} 
\end{equation}
Here $\al$ is the associativity isomorphism for the derived tensor product,
see (\ref{eqn:1400}). A quick check shows that it is a commutative diagram. 

By Theorem \ref{thm:1200} we get an isomorphic commutative diagram 
in $\cat{D}(A \ot B^{\mrm{op}})$, the solid arrows only:
\begin{equation} \label{eqn:3872}
\UseTips \xymatrix @C=8ex @R=8ex {
\mrm{R} \Ga_{\catt{S}^{\mrm{op}}}(\mrm{R} \Ga_{\catt{T}}(M))
\ar[r]^{}_{\cong}
\ar[d]_{ \si^{\mrm{R}}_{\catt{S}^{\mrm{op}}, \mrm{R} \Ga_{\catt{T}}(M)} }
&
\mrm{R} \Ga_{\catt{T}}(\mrm{R} \Ga_{\catt{S}^{\mrm{op}}}(M))
\ar[d]^{ \si^{\mrm{R}}_{\catt{T}, \mrm{R} \Ga_{\catt{S}^{\mrm{op}}}(M)} }
\\
\mrm{R} \Ga_{\catt{T}}(M)
\ar[dr]_{ \si^{\mrm{R}}_{\catt{T}, M} }
\ar@{-->}[r]^{\ep_M}
&
\mrm{R} \Ga_{\catt{S}^{\mrm{op}}}(M)
\ar[d]^{ \si^{\mrm{R}}_{\catt{S}^{\mrm{op}}, M} }
\\
&
M
} 
\end{equation}
The isomorphism going from diagram (\ref{eqn:3871}) to diagram (\ref{eqn:3872}) 
is by the various compositions of the isomorphisms $\ga_{A, (-)}$ and 
$\ga_{A^{\mrm{op}}, (-)}$. Because 
\[ \mrm{R} \Ga_{\catt{T}}(M) \in 
\cat{D}_{(\catt{T}, \catt{S}^{\mrm{op}})}(A \ot B^{\mrm{op}}) , \]
Theorem \ref{thm:2031} (applied to the ring $A \ot B^{\mrm{op}}$ and the 
torsion class $\catt{S}^{\mrm{op}}$) tells us that 
\[ \mrm{R} \Ga_{\catt{T}}(M) \in 
\cat{D}(A \ot B^{\mrm{op}})_{\catt{S}^{\mrm{op}} \tup{-tor}} . \]
Now Theorem \ref{thm:1400}(2), i.e.\ the idempotence of 
$\mrm{R} \Ga_{\catt{S}^{\mrm{op}}}$,
says that 
$\si^{\mrm{R}}_{\catt{S}^{\mrm{op}}, \mrm{R} \Ga_{\catt{T}}(M)}$
is an isomorphism. Likewise, because 
\[ \mrm{R} \Ga_{\catt{S}^{\mrm{op}}}(M) \in 
\cat{D}_{(\catt{T}, \catt{S}^{\mrm{op}})}(A \ot B^{\mrm{op}}) , \]
this theorem says that 
$\si^{\mrm{R}}_{\catt{T}, \mrm{R} \Ga_{\catt{S}^{\mrm{op}}}(M)}$
is an isomorphism. We define $\ep_M$ to be the unique isomorphism (the dashed 
arrow) that makes diagram (\ref{eqn:3872}) commutative.

The functoriality of $\ep_M$ is a consequence of the functoriality of diagram 
(\ref{eqn:3872}). This diagram also proves that $\ep_M$ is unique. 
\end{proof}

The next example relates symmetric derived torsion with the $\chi$ condition of 
Artin-Zhang \cite{AZ}. 

\begin{exa} \label{exa:2020}
Assume $\K$ is a field, and $A$ is a noetherian central $\K$-ring. 
We consider either of the next two scenarios:
\begin{itemize}
\item $A$ is a connected graded $\K$-ring, as in Example \ref{exa:1066}, 
with augmentation ideal $\m$. 

\item $A$ is a complete semilocal ring, with Jacobson radical $\m$, and 
$A / \m$ is finite over $\K$. 
\end{itemize}
The opposite ring is $A^{\mrm{op}}$, and in it there is the ideal 
$\m^{\mrm{op}}$. 
We have these torsion classes:
$\catt{T} := \catt{T}_{\m} \sub \cat{M}(A)$
and 
$\catt{T}^{\mrm{op}} := \catt{T}_{\m^{\mrm{op}}} \sub \cat{M}(A^{\mrm{op}})$.
The respective torsion functors are 
$\Ga_{\catt{T}}  = \Ga_{\m}$ and 
$\Ga_{\catt{T}^{\mrm{op}}}  = \Ga_{\m^{\mrm{op}}}$.

The {\em left $\chi$ condition} on $A$ (slightly rephrased; cf.\ \cite{WZ}) 
says that if $M$ is a finite $A$-module, then the $A$-modules  
$\mrm{R}^q \Ga_{\m}(M)$ are all cofinite (i.e.\ artinian). 
We say that {\em $A$ satisfies the $\chi$ condition} if both $A$ and 
$A^{\mrm{op}}$ satisfy the left $\chi$ condition. 

If the functors $\mrm{R} \Ga_{\m}$ and $\mrm{R} \Ga_{\m^{\mrm{op}}}$ have finite 
cohomological dimensions, then $A$ is said to have {\em finite local 
cohomological dimension}. 

It can be shown that if $A$ satisfies the $\chi$ condition and has finite local
cohomological dimension, then every 
$M \in \cat{D}(A^{\mrm{en}})$,
whose cohomology bimodules $\mrm{H}^q(M)$
are finite modules on both sides, has weakly symmetric derived 
$\m$-$\m^{\mrm{op}}$-torsion. (For the complete semi-local case see 
\cite[Lemma 2.8]{WZ}.) Theorem \ref{thm:1401} tells us that such $M$ has 
symmetric derived torsion. 
In particular, taking $M = A$, we get a canonical isomorphism 
\[ \ep_A : \mrm{R} \Ga_{\m}(A)  \iso  \mrm{R} \Ga_{\m}(A^{\mrm{op}}) \]
in $\cat{D}(A^{\mrm{en}})$.

In both scenarios there is a bimodule 
$A^* \in \cat{M}(A^{\mrm{en}})$,
which is a torsion module and an injective module on both sides. 
In the graded scenarios $A^*$ is the graded $\K$-linear dual of $A$, and in the 
complete case $A^*$ is the continuous $\K$-linear dual of $A$.

If $A$ satisfies the $\chi$ condition and has finite local
cohomological dimension, then the complex 
\[ R_A := \opn{Hom}_A \bigl( \mrm{R} \Ga_{\m}(A), A^* \bigr) 
\in \cat{D}(A^{\mrm{en}}) \]
is a {\em balanced dualizing complex} over $A$. 
In the graded scenario this was proved by Van den Bergh \cite{VdB}, and in the 
complete scenario this was essentially proved by Wu and Zhang in \cite{WZ}, 
with finishing touches in \cite{VY}. 
\end{exa}

\begin{rem} \label{rem:1350}
We can avoid the assumption that the rings $A$ and $B$ are flat over the 
base ring $\K$. This is done by taking K-flat DG ring resolutions of 
$A$ and $B$, as in Remark \ref{rem:1420}. However, the technical complications 
of such a generalization are quite substantial. Therefore we have decided to 
restrict attention in the present paper to the flat case. The nonflat 
generalization will appear in the future paper \cite{VY}.  
\end{rem}

\begin{rem} \label{rem:1400}
As was discovered by the first author of the present paper, the proof of
\cite[Theorem 1.23]{YZ} was erroneous unless the base ring $\K$ is a field. 
(Even then the proof of \cite[Lemma 1.24]{YZ} was incorrect; but that could be 
easily fixed when $\K$ is a field.) For the purposes of the paper \cite{YZ} 
this error was negligible, because for the remainder of that paper it was 
assumed anyhow that the base ring is a field. 

Finding a correct proof of \cite[Theorem 1.23]{YZ} was one of our goals for 
some time. In Theorem \ref{thm:1401} above we have accomplished that -- almost. 
The caveat is that here we need to assume that the torsion classes $\catt{T}$ 
and $\catt{S}^{\mrm{op}}$ are finite dimensional, and this condition did not 
appear in \cite{YZ}. As for the other conditions: in \cite{YZ} the torsion 
classes were assumed to be stable, and here only weak stability is needed. The 
condition  ``locally finitely resolved'' in \cite{YZ} implies 
quasi-compactness 
here. 
\end{rem}


\end{document}